\numberwithin{equation}{section}
\newtheorem{theorem}{Theorem}[section]
\newtheorem{lemma}[theorem]{Lemma}
\newtheorem{proposition}[theorem]{Proposition}
\newtheorem{corollary}[theorem]{Corollary}
\theoremstyle{definition}
\newtheorem{definition}[theorem]{Definition}
\newtheorem{def-prop}[theorem]{Definition-Proposition}
\newtheorem{remark}[theorem]{Remark}
\newtheorem{example}[theorem]{Example}
\newtheorem*{acknowledgement}{Acknowledgements}
\newtheorem{question}[theorem]{Question}
\DeclareMathOperator{\Ass}{Ass}
\DeclareMathOperator{\Min}{Min}
\DeclareMathOperator{\lcm}{lcm}
\DeclareMathOperator{\mdc}{mdc}
\DeclareMathOperator{\NP}{NP}
\DeclareMathOperator{\SP}{SP}
\DeclareMathOperator{\conv}{convex\ hull}
\DeclareMathOperator{\sgt}{sgt}
\DeclareMathOperator{\svd}{svd}
\newcommand{\ZZ}{{\mathbb Z}}
\newcommand{\NN}{{\mathbb N}}
\newcommand{\QQ}{{\mathbb Q}}
\newcommand{\RR}{{\mathbb R}}
\newcommand{\kk}{{\mathbbm k}}
\def\I{{\mathcal I}}
\def\J{{\mathcal J}}
\def\A{{\mathcal A}}
\def\C{{\mathcal C}}
\def\R{{\mathcal R}}
\def\R{{\mathcal R}}
\def\mm{{\mathfrak m}}
\def\pp{{\mathfrak p}}
\def\a{{\bf a}}
\def\b{{\bf b}}
\def\p{{\bf p}}
\def\q{{\bf q}}
\def\1{{\bf 1}}
\def\0{{\bf 0}}
\begin{document}
	
\title{Newton-Okounkov body, Rees algebra and analytic spread of graded families of monomial ideals}

\author{Huy T\`ai H\`a}
\address{Tulane University \\ Department of Mathematics \\
	6823 St. Charles Ave. \\ New Orleans, LA 70118, USA}
\email{tha@tulane.edu}

\author{Th\'ai Th\`anh Nguy$\tilde{\text{\^E}}$n}
\address{McMaster University, Department of Mathematics and Statistics,
	1280 Main Street West, Hamilton, Ontario, Canada \\
	and University of Education, Hue University, 34 Le Loi St., Hue, Viet Nam}
\email{nguyt161@mcmaster.ca}

\keywords{Newton-Okounkov body, Newton polyhedron, symbolic polyhedron, graded family of ideals, monomial ideals, analytic spread, Rees algebra, Noetherian Rees algebra}
\subjclass[2020]{13A30, 05E16, 05E40}

\begin{abstract}
Let $\I =\{I_k\}_{k \in \NN}$ be a graded family of monomial ideals. We use the Newton-Okounkov body of $\I$ to: (a) give a characterization for the Noetherian property of the Rees algebra of the family $\I$; and (b) present a combinatorial interpretation for the analytic spread of $\I$. We also apply these results to investigate and give bounds for the generation type and the Veronese degree of the symbolic Rees algebra of a monomial ideal.
\end{abstract}

\maketitle



\section{Introduction} \label{sec.intro}

This paper investigates properties of the Newton-Okounkov bodies associated to graded families of monomial ideals and their algebraic consequences.
Convex bodies have always played a special role in the study of algebraic properties and invariants. Their uses have led to many exciting results and applications in various areas of mathematics; for instance, in commutative algebra, in toric geometry and in tropical geometry (cf. \cite{CLS2011, MS2005, MS2015}).

A classical formula (see \cite{Teissier1988}) in multiplicity theory interprets the Hilbert-Samuel multiplicity of a monomial ideal, that is primary to the maximal homogeneous ideal, in terms of the volume of the complement of its Newton polyhedron (see also \cite{JM2013}). In \cite{CEHH2017}, the symbolic polyhedron of a monomial ideal was introduced to encode the asymptotic data of symbolic powers in a similar way that the Newton polyhedron does its ordinary powers. Newton and symbolic polyhedra of monomial ideals and their relationship continue to inspire much of current research (cf. \cite{BA2018,B+2021,C+2021,DFMS2019}). For example, a natural question is: for a squarefree monomial ideal, when are its Newton and symbolic polyhedra coincide? --- This question has a fascinating connection to a long standing conjecture in combinatorial optimization, namely, the Conforti-Cornu\'ejols conjecture (cf. \cite{D+2018, HT2019}).

Let $\kk$ be a field, let $R = \kk[x_1, \dots, x_n]$ be a polynomial ring over $\kk$, and let $I \subseteq R$ be a monomial ideal. We recall that the \emph{Newton} and \emph{symbolic polyhedra} of $I$ are defined as follows:
\begin{align*}
	\NP(I) & = \conv\left(\left\{\a \in \NN^n ~\big|~ x^\a \in I\right\}\right), \text{ and } \\
	\SP(I) & = \bigcap_{\pp \in \text{maxAss}(I)} \NP(Q_{\subseteq \pp}).
\end{align*}
Here, $\text{maxAss}(I)$ denotes the set of maximal associated primes of $I$, and $Q_{\subseteq \pp} = R \cap IR_\pp$. 

Both the Newton and symbolic polyhedra turn out to be particular cases of the Newton-Okounkov body (see Remark \ref{rmk.NPSP}).
The notion of Newton-Okounkov body was systematically introduced by Lazarsfeld and Musta\c{t}\v{a} \cite{LM2009} and by Kaveh and Khovanskii \cite{KK2012, KK2014}. The term \emph{Newton-Okounkov} was dedicated to Okounkov's pioneering work \cite{Ok1996, Ok2003}, in which the asymptotic multiplicities of irreducible representations, in the homogeneous coordinate ring of projective variety with a reductive group action, are interpreted as volumes of certain convex bodies. This construction has attracted much attention from researchers from across different areas, including algebraic and convex geometry and commutative algebra (cf. \cite{CM2021a,CM2021b,Cut2013,Cut2014,FH2021,Kav2015,NN2017,RW2019,Roe2016} and references therein thereafter).

For algebraic applications, the Newton-Okounkov body has been defined often for a \emph{graded algebra of integral type} or a graded family of ideals that are \emph{primary to the maximal homogeneous ideals}.
Our interest is in graded families of \emph{monomial} ideals. More specifically, let $\I = \{I_k\}_{k \in \NN}$ be a graded family of monomial ideals in $R$. As in \cite[Definition 4.7]{KK2014}, the following convex region is associated to $\I$.
$$\Delta(\I) = \overline{ \bigcup_{k \in \NN} \left\{\dfrac{\a}{k}  ~\big|~ x^\a \in I_k\right\}} \subseteq \RR^n.$$
In similar constructions for graded algebras of integral type, Kaveh and Khovanskii \cite{KK2012} called the \emph{compact} complements of the corresponding convex regions their \emph{Newton-Okounkov bodies}, while Cid-Ruiz, Mohammadi and Monin \cite{CMM2021} referred to these convex regions as their \emph{global Newton-Okounkov bodies}. For a graded family $\I$ of monomial ideals, the complement of $\Delta(\I)$ in $\RR^n_{\ge 0}$ is not necessarily compact. Thus, to avoid unnecessarily complicated terms and following Cutkosky's terminology \cite{Cut2013, Cut2014}, we shall call $\Delta(\I)$ the \emph{Newton-Okounkov body} of $\I$.

This paper aims to add important evidences to the rich and exciting connection between Newton-Okounkov bodies and algebraic properties and invariants of graded families of ideals. Particularly, we shall use combinatorial data of the Newton-Okounkov body of a graded family of monomial ideals to study the \emph{Noetherian property} of the \emph{Rees algebra} and to describe the \emph{analytic spread} of the given family.

Determining when the Rees algebra of a graded family of ideals is Noetherian is a difficult problem and, in general, is out of reach. It is closely related to Nagata's counterexample to Hilbert's fourteen problem (see \cite{Nag1959}). Many examples also exist to show that the Rees algebra of the family of symbolic powers of an ideal needs not be Noetherian (cf. \cite{Cut1991, Hun1982, Rob1985}). Our first main result characterizes the Noetherian property of the Rees algebra $\R(\I)$ of a graded family $\I$ of monomial ideals via its Newton-Okounkov and \emph{limiting} bodies.

\medskip

\noindent\textbf{Theorem \ref{thm.Noetherian}.} Let $\I = \{I_k\}_{k \in \NN}$ be a graded family of monomial ideals in $R$ and let $\overline{\I} = \{\overline{I_k}\}_{k \in \NN}$. The following are equivalent:
\begin{enumerate}
	\item There exists an integer $c$ such that $\Delta(\I) = \dfrac{1}{c}\NP(I_c)$.
		\item The limiting body ${\displaystyle \C(\I) = \bigcup_{k \in \NN}\dfrac{1}{k}\NP(I_k)}$ is a polyhedron.
	\item $\R(\overline{\I})$ is Noetherian.
	\item $\R(\I)$ is Noetherian.
\end{enumerate}

In initial studies of Newton-Okounkov bodies, a particular result that has captivated many algebraists' interest is the beautiful ``Volume $=$ Multiplicity'' formula (see, for instance, \cite[Theorem A]{LM2009}, \cite[Theorem 2]{KK2012} and \cite[Theorem 6.8]{KK2014}). Since then this formula has been explored and generalized in various directions (cf. \cite{Cut2013,Cut2014,CM2021a,CM2021b}). We shall provide yet another formula, which interprets the analytic spread of a graded family of monomial ideals in terms of its Newton-Okounkov body.

The \emph{analytic spread} of a graded family $\I$ of ideals in $R$ is defined to be
$$\ell(\I) = \dim (\R(\I)/\mm\R(\I)),$$
where $\mm$ denotes the maximal homogeneous ideal in $R$.
This notion of $\ell(\I)$ generalizes the familiar notion of the \emph{analytic spread} $\ell(I)$ of an ideal $I$ (when $\I$ is the family of ordinary powers of $I$) and the \emph{symbolic analytic spread} $\ell_s(I)$ (when $\I$ is the family of symbolic powers of $I$). Geometrically, the analytic spread of an ideal $I$ is the dimension of the special fiber of the blowup along the variety defined by $I$, while the symbolic analytic spread of a locally principal ideal $I$ on the
punctured spectrum can be viewed as a local version of the Kodaira dimension associated to this divisor if the symbolic Rees algebra is Noetherian. A recent work of Cutkosky and Sarkar \cite[Lemma 3.6]{CS2021} showed that for any graded \emph{filtration} $\I$ of ideals in a local ring, $\ell(\I)$ is finite. Their argument in fact extends to any graded family of ideals in a polynomial ring; see, for instance, Proposition \ref{lem.CS3.6}.

When $\I$ is Noetherian (i.e., the Rees algebra $\R(\I)$ is a Noetherian ring), $\ell(\I)$ also provides a measure for the rate of growth of the minimum number of generators of $\I$. Specifically, by letting $\mu(I)$ denote the minimum number of generators for an ideal $I$ and assuming that $\R(\I)$ is Noetherian, it is observed in Remark \ref{rmk.ellNoetherian} that
\begin{align}
\ell(\I) = \min\{t \in \RR ~\big|~ \mu(I_k) = O(k^{t-1})\}.\label{eq.ell*}
\end{align}
This equality is not known if $\I$ is not necessarily Noetherian. For families of symbolic powers of ideals, the right hand side of (\ref{eq.ell*}), which is denoted by $\ell^*(\I)$, has been studied, for example, in \cite{BS1990,DM2020,Dut1983,HKTT2017}, but it is not known if this invariant is finite in general.

Let $\mdc(\Delta)$ represent the \emph{maximum dimension of a compact face} of a polyhedron $\Delta$. Our next main result reads as follows.

\medskip

\noindent{\textbf{Theorem \ref{thm.analyticSpread}.}} Let $\I$ be a Noetherian graded family of monomial ideals in $R$. Let $\Delta(\I)$ and $\ell(\I)$ be its Newton-Okounkov body and analytic spread, respectively. Then,
	$$\ell(\I) = \mdc(\Delta(\I))+1.$$

A particular application of Theorem \ref{thm.analyticSpread} is when $\I = \{I^{(k)}\}_{k \in \NN}$ is the family of symbolic powers of a monomial ideal $I$. In this case, Theorem \ref{thm.analyticSpread} generalizes the combinatorial interpretation of the ordinary analytic spread of a monomial ideal given by Bivi\`a-Ausina \cite{BA2003} to the symbolic analytic spread; see Corollary \ref{cor.ellsG}.

For the less understood invariant $\ell^*(\I)$, we obtain a similar result to Theorem \ref{thm.analyticSpread} in a slightly more general setting, where the Newton-Okounkov body is defined with respect to a \emph{good} valuation; see \cite{Cut2013, KK2014} for the definition and existence of good valuations. The proof of this result provides an instance when one could study families of non-monomial ideals by reducing to those of monomial ideals.

\medskip

\noindent\textbf{Theorem \ref{thm.ell*valuation}.} Let $\I$ be a graded family of $\mm-$primary homogeneous ideals, and let $v$ be a good valuation that respects the monomials in $R$. Let $\C(\I)$ and $\Delta(\I)$ be the limiting and Newton-Okounkov bodies of $\I$ defined by $v$. Suppose that $\C(\I)$ is a polyhedron. Then, we have
$$\ell^*(\I) = \mdc(\Delta(\I)) + 1 = \dim R.$$

Specializing our results on graded families of monomial ideals to the ordinary powers and symbolic powers of an ideal gives to, in the context of the Newton and symbolic polyhedra, our investigation on the Newton-Okounkov body reveals an interesting connection between these two polyhedra associated to a monomial ideal. We prove the following theorem.

\medskip

\noindent\textbf{Theorem \ref{thm.NPSP}.} Let $I \subseteq R$ be a monomial ideal. There exists an integer $c$ such that
$$\NP(I^{(c)}) = c\SP(I).$$

The constant $c$ in Theorem \ref{thm.NPSP} is given implicitly in Theorem \ref{thm.Noetherian}. However, we show that, when $I$ is a squarefree monomial ideal, $c$ can be obtained explicitly from the vertices of the symbolic polyhedron $\SP(I)$ of $I$; see Theorem \ref{thm.c}.

Our description of the constant $c$ in Theorem \ref{thm.c} can further be used to give bounds for the symbolic generation type and Veronese degree of a squarefree monomial ideal. It is known (cf. \cite[Theorem 3.2]{HHT2007}) that the symbolic Rees algebra $\R_s(I)$ is finitely generated. Particularly, this ensures that the maximum generating degree $\sgt(I)$ of $\R_s(I)$ and the smallest degree $\svd(I)$ such that the $\svd(I)$-th Veronese subring of $\R_s(I)$ is standard graded are both finite. The invariants $\sgt(I)$ and $\svd(I)$ are called the \emph{symbolic generation type} and the \emph{symbolic Veronese degree} of $I$. These are classical objects of study in commutative algebra that have been much investigated (see, for instance, \cite{Cut1991, HHT2007, HHTZ2008,GS2021} and references therein). We obtain the following results that provide better bounds (and sharp bounds in some cases) than previously known bounds for those invariants.

\medskip

\noindent\textbf{Theorems \ref{thm.svd} and \ref{thm.GT}.} Let $I$ be a squarefree monomial ideal and suppose that $\{v_1, \dots, v_r\}$ are the vertices of $\SP(I)$. Let $d_i$ be the least common multiple of the denominators of coordinates of $v_i$, for $i = 1, \dots, r$. Set $D = \max\{d_1, \dots, d_r\}$ and $c = \lcm(d_1, \dots, d_r).$ Then,
\begin{enumerate}
	\item $c \le \svd(I) \le (\ell_s(I)-1)c = \mdc(\SP(I))c$; and
	\item $\sgt(I) \le \max\{\ell_s(I)D-1,D\}$.
\end{enumerate}

The paper is outlined as follows. In the next section, we collect important notations and terminology used in the paper. Particularly, we shall recall the definition of the Newton and symbolic polyhedra of a monomial ideal, define the Newton-Okounkov body and analytic spread of a graded family of monomial ideals, and discuss their basic properties.

In Section \ref{sec.NO}, we use the Newton-Okounkov body to characterize the Noetherian property of the Rees algebra. Our first main result, Theorem \ref{thm.Noetherian}, is proved in this section. To prove this result, we first establish the equivalence between $\Delta(\I) = \frac{1}{c}\NP(I_c)$ and $\C(\I)$ being a polyhedron -- this is done in Theorem \ref{lem.NPpoly}. For the remaining equivalences, we observe that the Noetherian property of $\R(\I)$ and its Veronese subrings are equivalent. This allows us to focus on Veronese subrings of $\R(\I)$ and conditions of the form $\frac{1}{c}\NP(I_c) = \frac{1}{kc}\NP(I_{kc})$ for a fixed $c$ and $k \in \NN$. We then show that the Noetherian property of $\R(\I)$ and $\R(\overline{\I})$ are equivalent. This is reflected through the fact that $\Delta(\I) = \Delta(\overline{\I})$, as shown in Proposition \ref{lem.int}. However, the proof requires a special care.

In section \ref{sec.AS}, we present a formula to compute the analytic spread of a graded family of monomial ideals in terms of the maximum dimension of a compact face of its Newton-Okounkov body. Our next main result, Theorem \ref{thm.analyticSpread}, is proved in this section. To prove this theorem, we make use of Theorem \ref{thm.Noetherian}, and notice that when $\Delta(\I) = \frac{1}{c}\NP(I_c)$, the maximum dimension of a compact face of $\Delta(\I)$ is the same as that of $\NP(I_c)$. The known result of Bivi\`a-Ausina \cite{BA2003} can now be applied for $I_c$. Theorem \ref{thm.ell*valuation} provides a similar result to that of Theorem \ref{thm.analyticSpread} in more general setting. We also show in this section that the arguments in \cite{CS2021} apply to show that $\ell(\I)$ is a finite invariant; Proposition \ref{lem.CS3.6}.

Section \ref{sec.NPSP} focuses on the Newton and symbolic polyhedra as special cases of the Newton-Okounkov body. Theorem \ref{thm.NPSP} is proved in this section. To prove Theorem \ref{thm.NPSP}, we observe that when $\I = \{I^{(k)}\}_{k \in \NN}$ is the family of symbolic powers of a monomial ideal $I$, $\SP(I) = \Delta(\I)$ and the symbolic Rees algebra $\R_s(I)$ is Noetherian. We then make use of Theorems \ref{lem.NPpoly} to relate the $\Delta(\I)$ to $\NP(I^{(c)})$. The explicit combinatorial description for this constant $c$ is also given in this section; Theorem \ref{thm.c}.

Finally, in Section \ref{sec.Veronese}, we provide bounds for the symbolic relation type and Veronese degree for a monomial ideal $I$, making use of the constant $c$ described in Theorem \ref{thm.NPSP}. Theorems \ref{thm.svd} and \ref{thm.GT} are proved in this section. To prove Theorem \ref{thm.svd}, we observe that the symbolic Veronese degree $d = \svd(I)$ is an integer satisfying the condition that $I^{(dk)} = \left(I^{(d)}\right)^{(k)}$ for any $k \in \NN$. We then invoke Lemma \ref{lem.int=symb}, which characterizes the equality $\NP(I) = \SP(I)$, and Theorem \ref{thm.c} to show that $\svd(I) \ge c$. The other inequality of Theorem \ref{thm.svd} is established by making use of \cite[Theorem 5.1]{Sin2007} which shows that the reduction number of $I$ is at most $\ell(I)-1$. To prove Theorem \ref{thm.GT}, we examine the combinatorial construction of the symbolic polyhedron $\SP(I)$ and use the Caratheodory theorem. 

\begin{acknowledgement} The first author is partially supported by Louisiana Board of Regents and Simons Foundation.
\end{acknowledgement}


 \section{Preliminaries} \label{sec.prel}

 In this section, we collect important definitions and notations used in the paper. For unexplained terminology from algebra and convex geometry we refer the interested reader to the following texts \cite{BH1993, MS2005, Zie1995}.

 Throughout the paper, $\kk$ denotes a field, and $R = \kk[x_1, \dots, x_n]$ is a polynomial ring over $\kk$. For $\a = (a_1, \dots, a_n) \in \NN^n$, let $x^\a$ denote the monomial $x_1^{a_1} \cdots x_n^{a_n}$ in $R$. By a \emph{polyhedron} in $\RR^n$ we mean the intersection of finitely many closed half spaces. Particularly, a polyhedron is a closed convex set.

 \begin{definition} \label{def.mdc}
 	For a polyhedron $P$, let $\mdc(P)$ denote the maximum dimension of a compact face of $P$.
 \end{definition}

\begin{definition} \label{def.symbolic}
	Let $I \subseteq R$ be an ideal and let $\Ass(I)$ denote the set of its associated primes. For any $k \in \NN$, the $k$-th \emph{symbolic power} of $I$ is defined to be
	$$I^{(k)} = \bigcap_{\pp \in \Ass(I)} \left(I^k R_\pp \cap R\right).$$
\end{definition}

We remark here that there is also a notion of symbolic powers in which the set $\Min(I)$ of minimal primes is used in place of the set $\Ass(I)$ of associated primes in Definition \ref{def.symbolic}. For squarefree monomial ideals or, more generally, ideals with no embedded primes, these two notions of symbolic powers agree.

 \begin{definition}
 	\label{def.NPSP}
 	Let $I \subseteq R$ be a monomial ideal.
 	\begin{enumerate}
 		\item (See \cite{HS2006}) The \emph{Newton polyhedron} of $I$ is defined to be
 		$$\NP(I) = \conv\left(\left\{ \a \in \NN^n ~\big|~ x^\a \in I\right\}\right).$$
 		\item (See \cite{CEHH2017}) Let $\text{maxAss}(I)$ denote the set of maximal associated primes of $I$ and set $Q_{\pp} = R \cap IR_\pp$ for $\pp \in \text{maxAss}(I).$ The \emph{symbolic polyhedron} of $I$ is defined to be
 		$$\SP(I) = \bigcap_{\pp \in \text{maxAss}(I)} \NP(Q_{\subseteq \pp}).$$
 	\end{enumerate}
 \end{definition}

It is easy to see that both $\NP(I)$ and $\SP(I)$ are \emph{rational} polyhedra in $\RR^n$.
The Newton and symbolic polyhedra are of particular interest to us thanks to the following simple membership criteria.

\begin{lemma} \label{lem.membership}
	Let $I \subseteq R$ be a monomial ideal and let $\a \in \ZZ^n_{\ge 0}$.
\begin{enumerate}
	\item (See, for example, \cite[Proposition 2.7]{DFMS2019}) For any given integer $k$, we have
	$$x^\a \in \overline{I^k} \text{ if and only if } \dfrac{\a}{k} \in \NP(I).$$
	\item (See, for example, \cite[Proposition 2.10]{DFMS2019}) Assume, in addition, that $I$ is squarefree. For any given integer $k$, we have
	$$x^\a \in I^{(k)} \text{ if and only if } \dfrac{\a}{k} \in \SP(I).$$
\end{enumerate}
\end{lemma}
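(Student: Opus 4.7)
The plan is to prove each membership criterion by identifying both sides as the same subset of $\NN^n$, using standard descriptions of Newton and symbolic polyhedra for monomial ideals.

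For part (1), I would invoke the classical fact that the integral closure of a monomial ideal $J$ is the monomial ideal whose exponent vectors are precisely the lattice points in its Newton polyhedron: $\overline{J} = (x^\b : \b \in \NP(J) \cap \NN^n)$. This can be quoted from \cite{HS2006}, or proven via the valuative criterion using monomial valuations associated to facet normals of $\NP(J)$. I would then observe that $\NP(I^k) = k \cdot \NP(I)$, with the inclusion $\subseteq$ coming from the fact that generators of $I^k$ are $k$-fold products of generators of $I$, and the inclusion $\supseteq$ from scaling convex combinations of generator exponents of $I$ and adding non-negative vectors. Combining yields $x^\a \in \overline{I^k}$ iff $\a \in k \cdot \NP(I) \cap \NN^n$, iff $\a/k \in \NP(I)$.

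For part (2), I would first simplify the description of $\SP(I)$ when $I$ is squarefree. Since the associated primes of a squarefree monomial ideal coincide with its minimal primes, and minimal primes are pairwise incomparable, we have $\text{maxAss}(I) = \Ass(I) = \Min(I)$. For each such $\pp$, localization gives $IR_\pp = \pp R_\pp$, so $Q_{\subseteq \pp} = R \cap \pp R_\pp = \pp$ and consequently $\SP(I) = \bigcap_{\pp \in \Min(I)} \NP(\pp)$. For a monomial prime $\pp = (x_j : j \in F_\pp)$, the Newton polyhedron is the halfspace $\NP(\pp) = \{\a \in \RR^n_{\ge 0} : \sum_{j \in F_\pp} a_j \ge 1\}$, obtained from the convex hull of the standard basis vectors $\e_j$ for $j \in F_\pp$ plus $\RR^n_{\ge 0}$. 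On the algebraic side, the standard formula $I^{(k)} = \bigcap_{\pp \in \Min(I)} \pp^k$ (using that $\pp^k$ is $\pp$-primary for a monomial prime generated by variables) reduces the membership question to: $x^\a \in \pp^k$ iff $\sum_{j \in F_\pp} a_j \ge k$. Scaling by $k$ in each halfspace then gives $\a/k \in \SP(I)$ iff $x^\a \in I^{(k)}$.

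The main obstacle is the foundational fact in part (1) that the integral closure of a monomial ideal is determined by its Newton polyhedron; once this and the analogous algebraic computations for symbolic powers of monomial primes in part (2) are in hand, both equivalences reduce to routine unwinding of polyhedral and ideal-theoretic descriptions. A pitfall I would want to avoid is reflexively invoking an ``intersection formula'' $\NP(J_1 \cap J_2) = \NP(J_1) \cap \NP(J_2)$ for arbitrary monomial ideals, which fails in general (e.g.\ for $J_1 = (x_1, x_2^2)$ and $J_2 = (x_1^2, x_2)$); the argument above sidesteps this by reducing directly to the primary components $\pp^k$ rather than intersecting their Newton polyhedra. Since the statement is well-established and the authors cite \cite{DFMS2019} for both parts, I expect the actual presentation to be brief, essentially quoting these references.
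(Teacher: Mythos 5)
Your proposal is correct, and the paper itself gives no proof of this lemma --- it is quoted directly from \cite[Propositions 2.7 and 2.10]{DFMS2019} --- so there is nothing to diverge from; your argument is exactly the standard one underlying those references: $\overline{J}$ of a monomial ideal $J$ is cut out by the lattice points of $\NP(J)$, $\NP(I^k)=k\NP(I)$, and for squarefree $I$ the reduction of $\SP(I)$ to the halfspaces $\sum_{j\in F_\pp}a_j\ge 1$ via $Q_{\subseteq\pp}=\pp$ and $I^{(k)}=\bigcap_\pp \pp^k$. Your explicit caution about not invoking $\NP(J_1\cap J_2)=\NP(J_1)\cap\NP(J_2)$ in general is well placed and consistent with how the paper itself treats $\SP(I)$ (compare the halfspace description in (\ref{eq.SP})).
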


When $I$ is a squarefree monomial ideal, $I$ has the form $I = \bigcap_{\pp \in \Min(I)} \pp$, where all associated primes of $I$ are \emph{monomial prime} ideals (i.e., generated by a collection of the variables). In this case, $\SP(I)$ has a simple description:
$$\SP(I) = \bigcap_{\pp \in \Min(I)} \{(a_1, \dots, a_n) \in \RR^n_{\ge 0} ~\big|~ \sum_{x_i \in \pp} a_i \ge 1\}.$$
This simple description for $\SP(I)$ holds also for a slightly larger class of monomial ideals.

\begin{definition} \label{def.linear-power}
	We say that a monomial ideal $I$ is of \emph{linear-power type} if $I$ can be written as
$$I = \bigcap_{\pp \in \Min(I)} \pp^{\omega_\pp},$$
in which $\omega_\pp \in \NN$ and all minimal primes of $I$ are monomial primes.
\end{definition}

For a monomial ideal of linear-power type, $I = \bigcap_{\pp \in \Min(I)} \pp^{\omega_\pp}$, we have
\begin{align}
	\SP(I) = \bigcap_{\pp \in \Min(I)} \{(a_1, \dots, a_n) \in \RR^n_{\ge 0} ~\big|~ \sum_{x_i \in \pp} a_i \ge \omega_\pp\}. \label{eq.SP}
\end{align}

\begin{remark}
	\label{rmk.SP}
	It is easy to see that Lemma \ref{lem.membership}.(2) holds also for monomial ideals of linear-power type.
\end{remark}

Note further that Lemma \ref{lem.membership}.(2) may not hold true if the ideal $I$ is not of linear-power type, as illustrated in the following example.

\begin{example}
	\label{ex.SPmonomial}
	Let $I = (x^4, xy^2, y^3) \subseteq \kk[x,y]$. The ideal $I$ is $(x,y)$-primary. Thus,
	$$\SP(I) = \NP(I).$$
	Observe that $(3,1) \in \NP(I) = \SP(I)$ since, for instance, $x^3y \in \overline{I}$. However, $x^3y \not\in I^{(1)} = I$.
\end{example}

Lemma \ref{lem.membership} shows that $\NP(I^k) = k\NP(I)$, and if in addition $I$ is squarefree (or, more generally, of linear-power type) then
\begin{enumerate}
	\item $\mathcal{L}(I^{(k)}) = k\SP(I) \cap \ZZ^n_{\ge 0},$ where $\mathcal{L}(J)$ denotes the lattice of exponents of monomials inside a monomial ideal $J$, and
	\item $\SP(I^{(k)}) = k\SP(I).$
\end{enumerate}

Observe that, by definition, $\NP(I) \subseteq \SP(I)$. It follows that, for a squarefree monomial ideal $I$ (or, more generally, a linear-power type monomial ideal $I$), we have
$$\overline{I^k} \subseteq I^{(k)} \text{ for all } k \in \NN.$$
This later containment, however, does not necessarily hold if $I$ is not of linear-power type, as also illustrated by Example \ref{ex.SPmonomial}.

We now define the main objects of study in this paper, the Newton-Okounkov body of a graded family of monomial ideals.

 \begin{definition} \label{def.family}
 	A collection $\I = \{I_k\}_{k \in \NN}$ of ideals in $R$ is called a \emph{graded family} if $I_p \cdot I_q \subseteq I_{p+q}$ for all $p, q \in \NN$. A graded family of ideals is called a \emph{filtration} if, in addition, we have $I_p \supseteq I_{p+1}$ for all $p \in \NN$.
 \end{definition}

 \begin{definition}[\protect{\cite[Definition 4.7]{KK2014}}]
 	\label{def.NO}
 	Let $\I = \{I_k\}_{k \in \NN}$ be a graded family of monomial ideals in $R$. The \emph{Newton-Okounkov body} of $\I$ is defined to be
 	$$\Delta(\I) = \overline{\bigcup_{k \in \NN} \left\{ \dfrac{\a}{k} ~\big|~ x^\a \in I_k \right\}} \subseteq \RR^n.$$
 \end{definition}

 Our definition of the Newton-Okounkov body (Definition \ref{def.NO}) only works for monomial ideals. Although this is slightly different from the notion of the Newton-Okounkov body associated to a \emph{graded algebra of integral type} and a \emph{faithful} valuation that was investigated in \cite{KK2012}, it is, in fact, isomorphic to the Newton-Okounkov body associated to the semigroup of all exponent vectors of the \textit{Rees algebra} of $\I$.

\begin{remark}
	\label{rmk.DeltaNP}
	It can be shown that $\Delta(\I)$ is a convex set. Thus, it follows from the definition that
	$\Delta(\I) =  \overline{\bigcup_{k \in \NN} \dfrac{1}{k}\NP(I_k)}.$
	In particular, $\Delta(\I)$ is the closure of the \emph{limiting body}
	${\displaystyle \C(\I) := \bigcup_{k \in \NN} \dfrac{1}{k} \NP(I_k)}$
	of $\I$, that was introduced and investigated recently in \cite{C+2021}; this asymptotic object was also discussed and studied in various contexts in \cite{Wol2008,Mayes14a,Mayes14b}.
\end{remark}

 \begin{remark}
 	\label{rmk.NPSP}
 	Observe that if $\I = \{I^k\}_{k \in \NN}$ is the family of ordinary powers of a monomial ideal $I$ then, by Remark \ref{rmk.DeltaNP}, we have
 	$$\Delta(\I) = \overline{\bigcup_{k \in \NN} \dfrac{1}{k} \NP(I^k)} = \NP(I).$$
 	
 	On the hand, if $\I = \{I^{(k)}\}_{k \in \NN}$ is the family of symbolic powers of a monomial ideal $I$ then, by \cite[Corollary 3.12]{C+2021}, we have
 		$\SP(I) = \bigcup_{k \in \NN} \dfrac{1}{k}\NP(I^k).$
 	Since $\SP(I)$ is a closed subset of $\RR^n$, we then have
 	$$\SP(I) = \Delta(\I).$$
 	Hence, the Newton and symbolic polyhedra of monomial ideals are particular cases of the Newton-Okounkov body.
 \end{remark}

\begin{definition}
    \label{def.Rees}
	Let $\I = \{I_k\}_{k \in \NN}$ be a graded family of ideals in $R$. The \emph{Rees algebra} of $\I$ is defined to be
$$\R(\I) = \bigoplus_{k \ge 0} I_k t^k \subseteq R[t].$$
\end{definition}

When $\I = \{I^k\}_{k \in \NN}$ or $\I = \{I^{(k)}\}_{k \in \NN}$ is the family of ordinary powers or the family of symbolic powers of an ideal $I \subseteq R$, the Rees algebra $\R(\I)$ is the familiar (and yet still far from being well understood) Rees algebra $\R(I)$ or \emph{symbolic} Rees algebra $\R_s(I)$ of $I$.

\begin{definition}
	\label{def.AS}
	Let $\I = \{I_k\}_{k \in \NN}$ be a graded family of ideals and let $\mm = (x_1, \dots, x_n)$ be the maximal homogeneous ideal in $R$. The \emph{analytic spread} of $\I$ is defined to be
	$$\ell(\I) = \dim (\R(\I)/\mm\R(\I)) = \dim (\R(\I) \otimes_R R/\mm).$$
\end{definition}

As with the Rees algebra, the analytic spread of a graded family of ideals generalizes the analytic spread and the symbolic analytic spread of an ideal. In general, it is not know if $\ell(\I)$ is finite. It is a recent result of Cutkosky and Sarkar (see \cite[Lemma 3.6]{CS2021}) that if $\I$ is a filtration then $\ell(\I)$ is finite (and bounded above by $\dim R$). The ring $\R(\I) \otimes_R R/\mm$ is often referred to as the \emph{special fiber ring} of $\I$.

A graded family $\I$ is called \emph{Noetherian} if its Rees algebra is. In general, the Rees algebra of a graded family $\I$ needs not be Noetherian. This is also the case even for the symbolic Rees algebra of an ideal or when the family consists of monomial ideals.

The symbolic Rees algebra of a monomial ideal is known to be Noetherian (see \cite[Theorem 3.2]{HHT2007}). Thus, the following invariants are finite.

\begin{definition}
	Let $I$ be a monomial ideal in $R$.
	\begin{enumerate}
		\item The \emph{symbolic generation type} of $I$ is defined to be the maximum generating degree of $\R_s(I)$. That is,
		$$\sgt(I) := \inf \{d ~\big|~ \R_s(I) = R[It, I^{(2)}t^2, \dots, I^{(d)}t^d]\}.$$
		\item The \emph{symbolic Veronese degree} of $I$ is defined to be the smallest integer $d$ such that the $d$-th Veronese subring of $\R_s(I)$ is standard graded. That is,
		$$\svd(I) := \inf \{d ~\big|~ I^{(dk)} = \left(I^{(d)}\right)^k \text{ for all } k \in \NN\}.$$
	\end{enumerate}
\end{definition}
These invariants are sometimes referred to as simply the generation type and the standard Veronese degree of the \emph{symbolic} Rees algebra of $I$.


 \section{Newton-Okounkov body and the Noetherian property of Rees algebra} \label{sec.NO}

 In this section, we investigate the Newton-Okounkov body associated to a graded family of monomial ideals, and its role in determining the Noetherian property of the Rees algebra of the given family.

We start by relating properties of the Newton-Okounkov and limiting bodies of a graded family of monomial ideals to the Newton polyhedron of a single ideal in the family.

  \begin{theorem}
 	\label{lem.NPpoly}
 	Let $\I = \{I_k\}_{k \in \NN}$ be a graded family of monomial ideals in $R$. The following are equivalent:
 	\begin{enumerate}
 		\item The limiting body ${\displaystyle \C(\I) = \bigcup_{k \in \NN}\dfrac{1}{k}\NP(I_k)}$ is a polyhedron.
 		\item There exists an integer $c$ such that $\Delta(\I) = \dfrac{1}{c}\NP(I_c)$.
 		\item There exists an integer $c$ such that $\dfrac{1}{c}\NP(I_c) = \dfrac{1}{kc}\NP(I_{kc})$ for all $k \in \NN$.
 	\end{enumerate}
 \end{theorem}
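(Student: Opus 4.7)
The plan is to prove the cycle $(3) \Rightarrow (2) \Rightarrow (1) \Rightarrow (3)$. Two elementary observations underlie everything. First, a \emph{monotonicity} property: because $I_k^m \subseteq I_{km}$, we have $m\,\NP(I_k) \subseteq \NP(I_{km})$, which on dividing by $km$ becomes
$$\dfrac{1}{k}\NP(I_k) \;\subseteq\; \dfrac{1}{km}\NP(I_{km}) \qquad \text{for all } k,m \in \NN.$$
Second, since $\NP(J) \subseteq \RR^n_{\ge 0}$ and $\NP(J) + \RR^n_{\ge 0} = \NP(J)$ for every monomial ideal $J$, each $\frac{1}{k}\NP(I_k)$ has recession cone exactly $\RR^n_{\ge 0}$; consequently $\C(\I) \subseteq \RR^n_{\ge 0}$ and $v + \RR^n_{\ge 0} \subseteq \C(\I)$ for every $v \in \C(\I)$.

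Given these, the two easy directions are routine chases of inclusions. For $(3) \Rightarrow (2)$, given any $m \in \NN$, monotonicity yields $\frac{1}{m}\NP(I_m) \subseteq \frac{1}{mc}\NP(I_{mc})$, and the right-hand side equals $\frac{1}{c}\NP(I_c)$ by hypothesis; taking the union over $m$ and then the closure gives $\Delta(\I) = \frac{1}{c}\NP(I_c)$. For $(2) \Rightarrow (1)$, the chain
$$\dfrac{1}{c}\NP(I_c) \;\subseteq\; \C(\I) \;\subseteq\; \Delta(\I) \;=\; \dfrac{1}{c}\NP(I_c)$$
forces $\C(\I) = \frac{1}{c}\NP(I_c)$, which is a polyhedron.

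The main content is $(1) \Rightarrow (3)$. Assume $P := \C(\I)$ is a polyhedron. Since $P \subseteq \RR^n_{\ge 0}$, $P$ contains no line, so it admits a Minkowski decomposition $P = \conv(v_1, \dots, v_r) + \mathrm{rec}(P)$ with $v_1, \dots, v_r$ its (finitely many) vertices; the second observation identifies $\mathrm{rec}(P) = \RR^n_{\ge 0}$. Each vertex $v_i$ lies in $P = \bigcup_k \frac{1}{k}\NP(I_k)$, so I pick $k_i$ with $v_i \in \frac{1}{k_i}\NP(I_{k_i})$ and set $c := \lcm(k_1, \dots, k_r)$. By monotonicity every $v_i$ belongs to $\frac{1}{c}\NP(I_c)$; since this polyhedron is convex with recession cone $\RR^n_{\ge 0}$, it contains $\conv(v_1, \dots, v_r) + \RR^n_{\ge 0} = P$, forcing $P = \frac{1}{c}\NP(I_c)$. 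For any $k \in \NN$, monotonicity once more gives $\frac{1}{c}\NP(I_c) \subseteq \frac{1}{kc}\NP(I_{kc}) \subseteq P = \frac{1}{c}\NP(I_c)$, which is exactly (3). The main obstacle is extracting a \emph{uniform} denominator $c$ from an a priori infinite union; the Minkowski decomposition of the polyhedron $P$ is precisely what reduces this to a single LCM over finitely many vertices.
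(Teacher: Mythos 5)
Your proof is correct and follows essentially the same route as the paper's: the crucial step in the hard direction is to take the finitely many vertices of the polyhedron $\C(\I)$, locate each $v_i$ in some $\frac{1}{k_i}\NP(I_{k_i})$, and push them all into a single $\frac{1}{c}\NP(I_c)$ via the monotonicity $\frac{1}{k}\NP(I_k) \subseteq \frac{1}{km}\NP(I_{km})$ (you take $c = \lcm(k_i)$ where the paper takes the product, which is immaterial). The only cosmetic difference is the logical organization — you run the cycle $(3)\Rightarrow(2)\Rightarrow(1)\Rightarrow(3)$ and make the recession-cone bookkeeping explicit, while the paper proves the two equivalences $(1)\Leftrightarrow(2)$ and $(2)\Leftrightarrow(3)$ separately.
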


 \begin{proof} We start by showing that $(1) \Leftrightarrow (2)$. If there exists an integer $c$ such that $\Delta(\I) = \frac{1}{c}\NP(I_c)$ then we have
 	$$\dfrac{1}{c}\NP(I_c) \subseteq \bigcup_{k \in \NN} \dfrac{1}{k}\NP(I_k) \subseteq \Delta(\I) = \dfrac{1}{c}\NP(I_c).$$
 	It follows that $\bigcup_{k \in \NN} \frac{1}{k}\NP(I_k) = \frac{1}{c}\NP(I_c)$ is a polyhedron. Thus, $(2) \Rightarrow (1)$.
 	
 	Conversely, suppose that $P = \bigcup_{k \in \NN} \NP(I_k)$ is a polyhedron. Let $v_1, \dots, v_m$ be the vertices of $P$ and suppose that $v_i \in \frac{1}{c_i}\NP(I_{c_i})$. Set $c = c_1 \dots c_m$. It is easy to see that
 	$$cv_i \in \prod_{j \not= i}c_j \NP(I_{c_i}) \subseteq \NP(I_{c_i \cdot \prod_{j \not= i}c_j}) = \NP(I_c).$$
 	Therefore, $v_i \in \frac{1}{c}\NP(I_c)$ for all $i = 1, \dots, m$. This implies that $P \subseteq \frac{1}{c}\NP(I_c)$. Hence, $P = \frac{1}{c}\NP(I_c)$ and, as a consequence,
 	$$\Delta(\I) = \overline{P} = \frac{1}{c}\NP(I_c).$$
 	
 	We shall now prove that $(2) \Leftrightarrow (3)$. Observe that for any $k, c \in \NN$, we have
 	\begin{align}
 		k\NP(I_c) \subseteq \NP(I_{kc}). \label{eq.filtration}
 	\end{align}
 	Thus, $\frac{1}{c}\NP(I_c) \subseteq \frac{1}{kc}\NP(I_{kc})$. This shows that the implication $(2) \Rightarrow (3)$ is obvious.
 	
 	Conversely, suppose there exists an integer $c$ such that $\frac{1}{c}\NP(I_c) = \frac{1}{kc}\NP(I_{kc})$ for all $k \in \NN$. Consider an arbitrary point $v \in \bigcup_{k \in \NN} \frac{1}{k}\NP(I_k)$. Let $k_0 \in \NN$ be such that $v \in \frac{1}{k_0}\NP(I_{k_0})$. As in (\ref{eq.filtration}), we have
 	$$\frac{1}{k_0}\NP(I_{k_0}) \subseteq \frac{1}{k_0c}\NP(I_{k_0c}) = \frac{1}{c}\NP(I_c).$$
 	This is true for any $v \in \bigcup_{k \in \NN} \frac{1}{k}\NP(I_k)$, so we have $\bigcup_{k \in \NN} \frac{1}{k}\NP(I_k) \subseteq \frac{1}{c}\NP(I_c)$. It follows that $\bigcup_{k \in \NN} \frac{1}{k}\NP(I_k) = \frac{1}{c}\NP(I_c)$ is a closed convex set. Hence, $\Delta(\I) = \frac{1}{c}\NP(I_c)$. We have established $(3) \Rightarrow (2)$, and the proof completes.
 \end{proof}

\begin{example} \label{ex.nonpolyDelta}
	For a general graded family of monomial ideals, the conditions in Theorem \ref{lem.NPpoly} may not hold. In fact, the Newton-Okounkov body of a graded family of monomial ideals needs not be polyhedral. It was shown in \cite[Proposition 2]{Wol2008} that for any non-empty closed convex set $P \subseteq \RR^n_{\ge 0}$ absorbing $\RR^n_{\ge 0}$, i.e., $P + \RR^n_{\ge 0} \subseteq P$, there exists a graded family of monomial ideals $\I$ such that $\Delta(\I) = P$. Particularly, one can take $\I = \{I_k\}_{k \in \NN}$ with
	$$I_k = \left\langle \left\{x^\a ~\big|~ \a \in kP \cap \ZZ^n\right\}\right\rangle.$$
	
	It is not hard to show further that $\I = \overline{\I}$, and if $\J = \{J_k\}_{k \in \NN}$ is any other graded family of monomial ideals such that $\Delta(\J) = P$ then we have $\J \subseteq \I$, i.e., $J_ k \subseteq I_k$ for all $k \in \NN$.
\end{example}

The next example illustrates that even if $\Delta(\I)$ is a rational polyhedron, the conditions in Theorem \ref{lem.NPpoly} may still not be satisfied.

\begin{example} \label{ex.nonpolyP}
	Let $R = \kk[x,y]$ and consider the graded family $\I = \{I_k\}_{k \in \NN}$ with
	$$I_k = (x,y)^{\lceil k/2 \rceil + 1} \subseteq R.$$
	It can be seen that the Newton polyhedron of $(x,y)^k$ is $\conv\{(k,0), (0,k)\} + \RR^2_{\ge 0}$. It follows that, for $k \in \NN$,
	$$\dfrac{1}{k}\NP(I_k) = \conv \left\{\left(\dfrac{\lceil k/2\rceil+1}{k},0\right), \left(0, \dfrac{\lceil k/2\rceil +1}{k}\right)\right\} + \RR^2_{\ge 0}.$$
	This, since
	$\lim_{n \rightarrow \infty} \dfrac{\lceil n/2\rceil + 1}{n} = \dfrac{1}{2},$
	implies that $\C(\I) = \bigcup_{k \in \NN}\dfrac{1}{k}\NP(I_k)$ is exactly the interior of $\conv \left\{ \left(\frac{1}{2},0\right), \left( 0, \frac{1}{2}\right)\right\} + \RR^2_{\ge 0}$ together with two open rays $\left( \frac{1}{2}, \infty\right) \times 0$ and $0 \times \left(\frac{1}{2},\infty\right).$ It is clear that $\C(\I)$ is not a polyhedron.
	
	On the other hand,
	$$\Delta(\I) = \overline{\C(\I)} = \conv \left\{ \left(\dfrac{1}{2}, 0\right), \left(0, \dfrac{1}{2}\right)\right\} + \RR^2_{\ge 0}$$
	is a rational polyhedron.
\end{example}

We are now ready to state our first main result, which establishes the equivalence between conditions in Theorem \ref{lem.NPpoly} and the Noetherian property of the Rees algebra of $\I$.

 \begin{theorem}
 	\label{thm.Noetherian}
 	Let $\I = \{I_k\}_{k \in \NN}$ be a graded family of monomial ideals in $R$ and let $\overline{\I} = \{\overline{I_k}\}_{k \in \NN}$. The following are equivalent:
 	\begin{enumerate}
 		\item $\I$ satisfies any of the conditions in Theorem \ref{lem.NPpoly}.
 		\item There exists an integer $c$ such that $\overline{I_c^k} = \overline{I_{kc}}$ for all $k \in \NN$.
 		\item $\R(\overline{\I})$ is Noetherian.
 		\item $\R(\I)$ is Noetherian.
 	\end{enumerate}
 \end{theorem}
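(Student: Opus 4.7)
The plan is to establish the four equivalences by proving the chain $(1) \Leftrightarrow (2)$, $(2) \Leftrightarrow (3)$, and $(3) \Leftrightarrow (4)$.

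For $(1) \Leftrightarrow (2)$, the plan is a direct translation via Lemma \ref{lem.membership}(1): the integral closure of a monomial ideal is determined by its Newton polyhedron, and $\NP(I_c^k) = k\NP(I_c)$, so the equality $\overline{I_c^k} = \overline{I_{kc}}$ for all $k$ is equivalent to $\frac{1}{c}\NP(I_c) = \frac{1}{kc}\NP(I_{kc})$ for all $k$, which is condition (3) of Theorem \ref{lem.NPpoly}.

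For $(2) \Rightarrow (3)$, we plan to apply Noether's theorem on the finite generation of integral closures of finitely generated $\kk$-algebra domains. After arranging $\gcd\{k \geq 1 : \overline{I_k} \ne 0\} = 1$ (by substituting $t \mapsto t^D$ with $D$ equal to this gcd), we choose positive integers $k_1, \ldots, k_s$ with $c \in \{k_i\}$, $\gcd(k_1,\ldots,k_s) = 1$, and each $\overline{I_{k_i}} \ne 0$; then set $B := R[\overline{I_{k_1}}t^{k_1}, \ldots, \overline{I_{k_s}}t^{k_s}]$. Then $B$ is a finitely generated $R$-algebra domain with $\operatorname{Frac}(B) = \operatorname{Frac}(R[t])$, so by Noether its integral closure $\tilde{B}$ in $R[t]$ is a finitely generated $B$-module. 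The central step is to verify $\R(\overline{\I}) \subseteq \tilde{B}$: for each monomial $x^\b t^k \in \R(\overline{\I})$, condition (2) gives $x^{c\b} \in \overline{I_c^k}$, so $(x^\b t^k)^c = x^{c\b}t^{ck}$ lies in the integral closure of $R[\overline{I_c}t^c] \subseteq B$; as $\tilde{B}$ is integrally closed in $R[t]$, this forces $x^\b t^k \in \tilde{B}$. Thus $\R(\overline{\I})$ is a finitely generated $B$-module and hence Noetherian. Conversely for $(3) \Rightarrow (2)$, a Noetherian $\ZZ_{\ge 0}$-graded $R$-algebra is finitely generated over $R$; a standard Veronese argument then produces $c$ such that $\R(\overline{\I})^{(c)}$ is generated over $R$ by $\overline{I_c}t^c$, yielding $(\overline{I_c})^k = \overline{I_{kc}}$, and the identity $\overline{(\overline{J})^k} = \overline{J^k}$ converts this to $\overline{I_c^k} = \overline{I_{kc}}$.

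For $(3) \Leftrightarrow (4)$, the key observation is that $\R(\overline{\I})$ is integral over $\R(\I)$: each monomial $x^\b t^k \in \overline{I_k}t^k$ satisfies an integral dependence $X^N + \sum a_j X^{N-j} = 0$ for $x^\b$ over $I_k$ (with $a_j \in I_k^j$), and multiplying each $a_j$ by $t^{jk}$ yields a monic polynomial over $\R(\I)$ satisfied by $x^\b t^k$, using $a_j t^{jk} \in I_{jk}t^{jk} \subseteq \R(\I)$. For $(3) \Rightarrow (4)$, integrality plus $\R(\overline{\I})$ being a finitely generated $\R(\I)$-algebra (inherited from finite generation over $R$) makes $\R(\overline{\I})$ a finitely generated $\R(\I)$-module, and Eakin-Nagata forces $\R(\I)$ Noetherian. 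For $(4) \Rightarrow (3)$, $\R(\I)$ Noetherian is a finitely generated $\kk$-algebra domain, hence Nagata, so its integral closure in $\operatorname{Frac}(\R(\I))$ is a finitely generated $\R(\I)$-module; by integrality (and $\operatorname{Frac}(\R(\I)) = \operatorname{Frac}(R[t])$ after the gcd reduction), $\R(\overline{\I})$ is contained in this integral closure and thus is Noetherian.

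The hardest part will be in $(2) \Rightarrow (3)$: arranging $\operatorname{Frac}(B) = \operatorname{Frac}(R[t])$ via the gcd reduction, and then deducing $\R(\overline{\I}) \subseteq \tilde{B}$ from the Newton polyhedron condition via the $c$-th power identity $(x^\b t^k)^c = x^{c\b}t^{ck}$.
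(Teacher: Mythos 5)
Your proposal is correct. The translation $(1)\Leftrightarrow(2)$ via Newton polyhedra and the Veronese argument for $(3)\Rightarrow(2)$ coincide with the paper's. Where you genuinely diverge is in how you obtain Noetherianity. For $(2)\Rightarrow(3)$ the paper identifies the $c$-th Veronese subalgebra $\R^{[c]}(\overline{\I})$ with $\overline{\R(I_c)}$ and then invokes \cite[Theorem 2.1]{HHT2007} to pass from a Noetherian Veronese subalgebra to the whole Rees algebra; you instead trap $\R(\overline{\I})$ inside the integral closure $\tilde B$ of a finitely generated subalgebra $B$ and quote Noether's finiteness of normalization. This buys independence from the Veronese-to-whole-ring lemma at the cost of the gcd/fraction-field bookkeeping (which you could even skip by applying Noether's theorem to the finite extension $\operatorname{Frac}(R[t])/\operatorname{Frac}(B)$). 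For $(3)\Leftrightarrow(4)$ the paper routes everything through the intermediate ring $\R(I_c)$: it shows $\R(\overline{\I})$ is module-finite over $\R(I_c)$, takes $\R(\I)$ as a submodule, and applies Eakin--Nagata over $\R(I_c)$, while $(4)\Rightarrow(3)$ is obtained by detouring back through condition $(2)$ via a Veronese of $\R(\I)$. You work directly with the integral extension $\R(\I)\subseteq\R(\overline{\I})$, applying Eakin--Nagata to this pair in one direction and finiteness of the normalization of the Noetherian domain $\R(\I)$ in the other; this is arguably cleaner and also answers, in this monomial setting, the paper's own Remark \ref{rmk.direct} asking for a more direct link between $(4)$ and the integral-closure conditions. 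The only points that deserve explicit justification in a full write-up are the one-line verification $x^{c\b}=(x^\b)^c\in\overline{I_k}^{\,c}\subseteq\overline{I_k^c}\subseteq\overline{I_{kc}}=\overline{I_c^k}$ behind your containment $\R(\overline{\I})\subseteq\tilde B$, and the degenerate case where $I_k=0$ for all $k\ge 1$; neither is a gap.
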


 \begin{proof} We start by showing that $(1) \Leftrightarrow (2)$. Indeed, observe that, for any $k, c \in \NN$, $\NP(I_c^k) = k\NP(I_c)$. Thus, the condition in Theorem \ref{lem.NPpoly} is equivalent to the condition that
 	$$\NP(I_c^k) = \NP(I_{kc}).$$
 Furthermore, since the Newton polyhedron determines the integral closure of a monomial ideal. This later condition is equivalent to the fact that $\overline{I_c^k} = \overline{I_{kc}}$. That is, $(1) \Leftrightarrow (2)$.

 We continue by showing that $(2) \Leftrightarrow (3)$. Suppose first that there exists an integer $c$ such that $\overline{I_c^k} = \overline{I_{kc}}$ for all $k \in \NN$. This implies that the $c$-th Veronese subalgebra of the Rees algebra of the graded family $\overline{\I}$ is given by
 $$\R^{[c]}(\overline{\I}) = \bigoplus_{k \in \NN}\overline{I_{kc}}t^k = \bigoplus_{k \in \NN}\overline{I_c^k}t^k = \overline{\R(I_c)},$$
 the integral closure of the Rees algebra of $I_c$. It follows that $\R^{[c]}(\overline{\I})$ is Noetherian. As a consequence, by \cite[Theorem 2.1]{HHT2007}, $\R(\overline{\I})$ is also Noetherian. Thus, $(2) \Rightarrow (3)$.

 Conversely, suppose that $\R(\overline{\I})$ is Noetherian. Then, there exists an integer $c$ such that its $c$-th Veronese subalgebra is standard graded. That is, we have
 $$\overline{I_{kc}} = \overline{I_c}^k \text{ for all } k \in \NN.$$
 Therefore, for all $k \in \NN$,
 $$\overline{I_{kc}} = \overline{I_c}^k \subseteq \overline{I_c^k} \subseteq \overline{I_{kc}}.$$
 This forces $\overline{I_c^k} = \overline{I_{kc}}$ for all $k \in \NN$. This gives $(3) \Rightarrow (2)$.

 Finally, we show that $(3) \Leftrightarrow (4)$. Suppose that $\R(\overline{\I})$ is Noetherian. As above, there exists an integer $c$ such that $\overline{I_c^k} = \overline{I_{kc}}$ for all $k \in \NN$, and $S = \R^{[c]}(\overline{\I})$ is a Noetherian ring. Observe that $\R(\overline{\I})$ is integral over $S$, so $\R(\overline{\I})$ is a finitely generated $S$-module.
 In fact, to see that $\R(\overline{\I})$ is integral over $S$, take any $f\in \overline{I_k}$, we have
 $$f^c \in (\overline{I_k})^c \subseteq \overline{(I_k)^c} \subseteq \overline{I_{ck}} = (\overline{I_c})^k,$$
 hence, $f$ satisfies the equation $x^c - f^c=0$ where $f^c \in (\overline{I_c})^k \subseteq S$.
 Observe further that $S$ is the integral closure of $\R(I_c)$ in $R[t]$, so $S$ is a finitely generated module over the Rees algebra $\R(I_c)$ of $I_c$. Therefore, $\R(\overline{\I})$ is a finitely generated and, hence, Noetherian module over $\R(I_c)$. As a consequence, its submodule $\R(\I)$ is also finitely generated over $\R(I_c)$. By Eakin-Nagata theorem (see, for example, \cite[page 263]{Mat1980}), it follows that $\R(\I)$ is a Noetherian ring.


 Conversely, suppose that $\R(\I)$ is Noetherian. Then, again by \cite[Theorem 2.1]{HHT2007}, there exists an integer $c$ such that its $c$-th Veronese subalgebra is standard graded. That is, $I_{kc} = I_c^k$ for all $k \in \NN$. Particularly, we get $\overline{I_{kc}} = \overline{I_c^k}$ for all $k \in \NN$. Therefore, $(4) \Rightarrow (2) \Leftrightarrow (3)$. The theorem is proved.
 \end{proof}

Following \cite{D+2021}, for a monomial ideal $I$ and a real number $r \in \RR_{\ge 0}$, we define the \emph{$r$-th real power} of $I$ to be
$$\overline{I^r} := \{x^\a ~\big|~ \a \in r\NP(I) \cap \NN^n\}.$$

\begin{example}
	\label{ex.Noetherian}
	Let $I$ be a monomial ideal and let $f: \NN \rightarrow \RR_{\ge 0}$ be a \emph{subadditive} function; that is, $f(m) + f(n) \ge f(m+n)$ for all $m,n \in \NN$. Suppose that $\lim_{k \rightarrow \infty} \frac{f(k)}{k} \in \QQ$ and is attained at some value $k_0$. Let $\I = \{I_k\}_{k \in \NN}$ be the family of monomial ideals given by
	$$I_k = \overline{I^{f(k)}}.$$
	
	It can be seen that, since $f$ is subadditive, $\I$ is a graded family of ideals. Observe also that, again since $f$ is subadditive, $\lim_{k \rightarrow \infty} \frac{f(k)}{k} = \inf_{k \in \NN} \frac{f(k)}{k}$. It follows that, for all $k \in \NN$, we have
	$$\dfrac{1}{k}\NP\left(\overline{I^{f(k)}}\right) = \dfrac{f(k)}{k}\NP(I) \subseteq \frac{f(k_0)}{k_0}\NP(I) = \dfrac{1}{k_0}\NP\left(\overline{I^{f(k_0)}}\right).$$
	This implies that
	$$\C(\I) = \bigcup_{k \in \NN}\dfrac{1}{k}\NP(I_k) = \dfrac{f(k_0)}{k_0}\NP(I)$$
is a rational polyhedron. By Theorem \ref{thm.Noetherian}, we deduce that $\R(\I)$ is a Noetherian algebra. This is a nontrivial fact for many choices of the function $f$; for instance if, for some $r \in \QQ$,
	$$f(n) = \dfrac{rn-1+\sqrt{(rn)^2 + 2\lceil rn \rceil + 1}}{rn}.$$
\end{example}

\begin{example}
	\label{ex.nonNoeth}
	Let $\I = \{I_k\}_{k \in \NN}$, where $I_k = (x,y)^{\lceil k/2 \rceil + 1} \subseteq \kk[x,y]$, be the graded family as in Example \ref{ex.nonpolyP}. As shown in Example \ref{ex.nonpolyP}, the limiting body $\C(\I)$ is not a polyhedron. Thus, by Theorem \ref{thm.Noetherian}, we know that the Rees algebra $\R(\I)$ is not Noetherian. This is a nontrivial fact otherwise.
\end{example}

\begin{remark}
	In \cite{C+2021}, the \emph{asymptotic Newton polyhedron} associated to a graded family $\I$ of monomial ideals is defined to be the limiting body $\C(\I)$ if $\C(\I)$ is a polyhedron. Theorems \ref{lem.NPpoly} and \ref{thm.Noetherian} basically say that the asymptotic Newton polyhedron of $\I$ exists precisely when the Rees algebra $\R(\I)$ is Noetherian.
\end{remark}

As an immediate consequence of Theorem \ref{thm.Noetherian}, we obtain the following corollary, whose last assertion recovers \cite[Corollary 2.2]{HHT2007}.

\begin{corollary}
	\label{cor.NoetherianR}
	Let $I$ be a monomial ideal with a monomial decomposition $I = J_1 \cap \dots \cap J_s$. Let $\I = \{I_k\}_{k \in \NN}$ be a graded family where $I_k = J_1^k \cap \dots \cap J_s^k$. Then the Rees algebra $\R(\I)$ is Noetherian, and there exists an integer $c$ such that
	$$\left(\bigcap_{i=1}^s J_i^c\right)^k = J_1^{kc} \cap \dots \cap J_s^{kc} \text{ for all } k \in \NN.$$
\end{corollary}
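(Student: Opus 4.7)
The plan is to verify one of the equivalent conditions of Theorem~\ref{thm.Noetherian} for the family $\I=\{I_k\}$; both conclusions will then follow immediately. First, I would observe that $\I$ is indeed a graded family of monomial ideals: each $I_k=J_1^k\cap\cdots\cap J_s^k$ is monomial (intersections of monomial ideals are monomial), and the containment $I_p\cdot I_q\subseteq J_i^{p+q}$ holding for every $i$ forces $I_p\cdot I_q\subseteq I_{p+q}$.

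The main step will be to show that the limiting body $\C(\I)=\bigcup_{k}\tfrac{1}{k}\NP(I_k)$ is a polyhedron. For this I would invoke the standard identity that, for monomial ideals $K_1,\dots,K_s$,
\[
\NP(K_1\cap\cdots\cap K_s)\;=\;\NP(K_1)\cap\cdots\cap\NP(K_s);
\]
equivalently, integral closure distributes over finite intersections of monomial ideals. Applying this with $K_i=J_i^k$ and combining with $\NP(J_i^k)=k\,\NP(J_i)$ yields
\[
\tfrac{1}{k}\NP(I_k)\;=\;\bigcap_{i=1}^{s}\NP(J_i),
\]
which is \emph{independent of $k$}. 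Consequently $\C(\I)=\bigcap_{i=1}^{s}\NP(J_i)$ is a rational polyhedron, verifying condition~(1) of Theorem~\ref{lem.NPpoly}.

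By Theorem~\ref{thm.Noetherian}, $\R(\I)$ is then Noetherian. To read off the explicit Veronese identity, I would apply \cite[Theorem~2.1]{HHT2007} (the same ingredient used inside the proof of Theorem~\ref{thm.Noetherian}): a Noetherian positively graded algebra admits some Veronese subalgebra that is standard graded. This furnishes an integer $c$ with $I_{kc}=I_c^k$ for all $k\in\NN$, which unpacks precisely to $\bigl(\bigcap_i J_i^c\bigr)^k=\bigcap_i J_i^{kc}$. The only nontrivial ingredient beyond Theorem~\ref{thm.Noetherian} is the intersection formula for Newton polyhedra of monomial ideals, which is standard (see, e.g., Huneke--Swanson); accordingly the corollary presents no serious obstacle and is essentially a direct application of the main theorem.
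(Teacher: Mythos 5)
Your overall strategy is the paper's: show the limiting body $\C(\I)=\bigcup_k\frac{1}{k}\NP(I_k)$ is a polyhedron, invoke Theorem~\ref{thm.Noetherian} for Noetherianity, and read off the Veronese identity from \cite[Theorem 2.1]{HHT2007}. The problem is the identity you use to carry out the key step. It is \emph{not} true that $\NP(K_1\cap\cdots\cap K_s)=\NP(K_1)\cap\cdots\cap\NP(K_s)$ for monomial ideals, nor that integral closure distributes over finite intersections. Only the inclusion $\NP(\bigcap_i K_i)\subseteq\bigcap_i\NP(K_i)$ holds in general. For a counterexample take $K_1=(x^2,y)$ and $K_2=(x,y^2)$ in $\kk[x,y]$: then $K_1\cap K_2=(x,y)^2$, so $\NP(K_1\cap K_2)=\{(a,b)\in\RR^2_{\ge 0}: a+b\ge 2\}$, whereas $\NP(K_1)\cap\NP(K_2)$ is cut out by $\tfrac{a}{2}+b\ge 1$ and $a+\tfrac{b}{2}\ge 1$ and contains the point $\left(\tfrac{2}{3},\tfrac{2}{3}\right)$, which violates $a+b\ge 2$. (At the ideal level, $(x^4,y^2)\cap(x^2,y^4)=(x^4,x^2y^2,y^4)$ gives $x^2y\in\overline{(x^4,y^2)}\cap\overline{(x^2,y^4)}$ but $x^2y\notin\overline{(x^4,y^2)\cap(x^2,y^4)}$.) Consequently your assertion that $\frac{1}{k}\NP(I_k)=\bigcap_i\NP(J_i)$ \emph{for each fixed $k$} is false; in the example above $\NP(I_1)$ is strictly smaller than $\NP(K_1)\cap\NP(K_2)$, and the sets $\frac{1}{k}\NP(I_k)$ genuinely increase with $k$.

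What is true, and what the paper cites, is the asymptotic statement \cite[Theorem 3.11]{C+2021}: the \emph{union} $\bigcup_{k}\frac{1}{k}\NP(J_1^k\cap\cdots\cap J_s^k)$ equals $\bigcap_i\NP(J_i)$. The easy inclusion $\subseteq$ follows from $I_k\subseteq J_i^k$; the substantive direction is that every rational point of $\bigcap_i\NP(J_i)$ eventually lands in $\frac{1}{k}\NP(I_k)$ for $k$ sufficiently large and divisible, which requires a linear-growth/shifting argument rather than a per-$k$ identity. So your proof has a genuine gap at its central step: you need to replace the false distributivity claim with this asymptotic result (or a proof of it). Once that is done, the rest of your argument — Theorem~\ref{lem.NPpoly}(1), Theorem~\ref{thm.Noetherian}, and the standard-graded Veronese subalgebra from \cite[Theorem 2.1]{HHT2007} — goes through exactly as in the paper.
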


\begin{proof} By \cite[Theorem 3.11]{C+2021}, we have that
	$$\bigcup_{k \in \NN} \dfrac{1}{k}\NP(I_k) = \NP(J_1 ) \cap \dots \cap \NP(J_s)$$
	is a polyhedron. Thus, it follows from Theorem \ref{thm.Noetherian} that $\R(\I)$ is Noetherian. This establishes the first assertion.
	
	Since $\R(\I)$ is Noetherian, there exists an integer $c$ such that its $c$-th Veronese subalgebra is standard graded. This proves the later assertion.
\end{proof}

Theorem \ref{thm.Noetherian} suggests that there is a close connection between the Newton-Okounkov bodies associated to $\I$ and $\overline{\I}$. The next proposition shows that it is indeed the case.

\begin{proposition}
	\label{lem.int}
	Let $\I = \{I_k\}_{k \in \NN}$ be a graded family of monomial ideals in $R$. Set $\overline{\I} = \{\overline{I_k}\}_{k \in \NN}.$ Then, $\overline{\I}$ is also a graded family of ideals in $R$, and we have
	$$\Delta(\I) = \Delta(\overline{\I}).$$
\end{proposition}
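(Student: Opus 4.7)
My plan is to split the proof into two parts: first verifying that $\overline{\I}$ is a graded family, and then establishing the equality of Newton-Okounkov bodies.

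For the graded family property, I would use that the integral closure of a monomial ideal is again a monomial ideal, so it suffices to check the inclusion $\overline{I_p}\cdot\overline{I_q}\subseteq \overline{I_{p+q}}$ on monomial generators. Given $x^\a \in \overline{I_p}$ and $x^\b \in \overline{I_q}$, Lemma \ref{lem.membership}(1) (applied with exponent $k=1$) gives $\a \in \NP(I_p)$ and $\b \in \NP(I_q)$. A standard convex-hull manipulation shows $\NP(I_p)+\NP(I_q)\subseteq \NP(I_p I_q)$, and since $\I$ is graded we have $I_pI_q\subseteq I_{p+q}$, hence $\NP(I_pI_q)\subseteq \NP(I_{p+q})$. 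Combining these, $\a+\b\in \NP(I_{p+q})$, and Lemma \ref{lem.membership}(1) gives $x^{\a+\b}\in\overline{I_{p+q}}$.

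For the equality $\Delta(\I)=\Delta(\overline{\I})$, I would first translate both sides to statements about Newton polyhedra. By Lemma \ref{lem.membership}(1), $\{\a\in\NN^n : x^\a\in\overline{I_k}\} = \NP(I_k)\cap\NN^n$, so
$$\Delta(\overline{\I}) = \overline{\bigcup_{k\in\NN}\frac{1}{k}\bigl(\NP(I_k)\cap\NN^n\bigr)},$$
while Remark \ref{rmk.DeltaNP} gives $\Delta(\I) = \overline{\bigcup_{k\in\NN}\tfrac{1}{k}\NP(I_k)}$. The containment $\Delta(\overline{\I})\subseteq\Delta(\I)$ is then immediate.

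The core of the argument is the reverse containment $\Delta(\I)\subseteq\Delta(\overline{\I})$. Because $\Delta(\overline{\I})$ is closed, it suffices to show $\tfrac{1}{k}\NP(I_k)\subseteq\Delta(\overline{\I})$ for each $k$. The key rescaling trick is: for any rational $v\in\tfrac{1}{k}\NP(I_k)\cap\QQ^n$, choose $m\in\NN$ with $mkv\in\NN^n$; then using $m\NP(I_k)=\NP(I_k^m)$ together with the graded-family inclusion $I_k^m\subseteq I_{mk}$, we get
$$mkv\in m\NP(I_k)\subseteq \NP(I_k^m)\subseteq \NP(I_{mk}),$$
so $mkv\in\NP(I_{mk})\cap\NN^n$ and therefore $v\in\tfrac{1}{mk}(\NP(I_{mk})\cap\NN^n)\subseteq\Delta(\overline{\I})$. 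To pass from rational $v$ to arbitrary $v\in\tfrac{1}{k}\NP(I_k)$, I would invoke the fact that $\NP(I_k)$ is a \emph{rational} polyhedron (its vertices lie in $\NN^n$ and its recession cone is $\RR^n_{\ge 0}$), so rational points are dense in it; a rational approximating sequence lies in $\Delta(\overline{\I})$ by the argument above, and the limit lies there too by closedness. The main technical point to handle carefully is this density step, since without rationality of the defining data, the rescaling trick that clears denominators could not be applied.
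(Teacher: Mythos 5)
Your proof is correct, but it takes a different technical route from the paper's. The paper argues entirely at the level of monomials: for the graded-family claim it uses that a monomial $f\in\overline{I_p}$ satisfies $f^r\in I_p^r$ for some $r$, so $(fg)^{rs}\in (I_pI_q)^{rs}\subseteq I_{p+q}^{rs}$; and for the nontrivial inclusion of the bodies it applies the same power trick to each point $\frac{\a}{k}$ with $x^\a\in\overline{I_k}$, getting $x^{r\a}\in I_k^r\subseteq I_{kr}$ and hence $\frac{\a}{k}=\frac{r\a}{kr}\in\Delta(\I)$ directly --- no polyhedral geometry and no density argument are needed, since the computation never leaves the lattice. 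You instead route everything through Newton polyhedra via Lemma \ref{lem.membership}(1) and Remark \ref{rmk.DeltaNP}; this flips which inclusion is ``trivial'' (for the paper, $\Delta(\I)\subseteq\Delta(\overline{\I})$ follows from $I_k\subseteq\overline{I_k}$; for you, $\Delta(\overline{\I})\subseteq\Delta(\I)$ follows from $\NP(I_k)\cap\NN^n\subseteq\NP(I_k)$) and forces the extra step of approximating arbitrary points of $\NP(I_k)$ by rational ones, which is legitimate precisely because $\NP(I_k)$ is a rational polyhedron. The underlying mechanism is the same in both proofs (pass to a power $r$ to clear the integral closure or the denominators, then use $I_k^r\subseteq I_{kr}$); the paper's version is shorter and self-contained, while yours makes the polyhedral picture explicit at the cost of leaning on Remark \ref{rmk.DeltaNP} and on density. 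Your polyhedral proof of the graded-family claim, via $\NP(I_p)+\NP(I_q)\subseteq\NP(I_pI_q)\subseteq\NP(I_{p+q})$, is likewise a valid alternative to the paper's direct power computation.
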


\begin{proof} We shall first show that $\overline{\I}$ is a graded family of ideals in $R$. Consider arbitrary $p, q \in \NN$, and monomials $f \in \overline{I_p}$ and $g \in \overline{I_q}$. Since we are working with monomial ideals, this implies that there exist positive integers $r$ and $s$ such that
	$$f^r \in I_p^r \text{ and } g^s \in I_q^s.$$
	Particularly, we get
	$$(fg)^{rs} = f^{rs}g^{rs} \in I_p^{rs}I_q^{rs} = (I_pI_q)^{rs} \subseteq (I_{p+q})^{rs}.$$
	Thus, by definition, $fg \in \overline{I_{p+q}}$. This shows that $\overline{I_p}\cdot \overline{I_q} \subseteq \overline{I_{p+q}}$, and $\overline{\I}$ is a graded family of ideals.
	
	To prove the equality $\Delta(\I) = \Delta(\overline{\I})$, observe first the inclusion $\Delta(\I) \subseteq \Delta(\overline{\I})$ is trivia. We shall prove the other inclusion. Consider any $\frac{\a}{k} \in \Delta(\overline{\I})$, for which $x^\a \in \overline{I_k}$. Since $I_k$ is a monomial ideal, there exists a positive integer $r$ such that $x^{r\a} = (x^\a)^r \in I_k^r \subseteq I_{kr}$. This implies that
	$$\dfrac{\a}{k} = \dfrac{r\a}{kr} \in \Delta(\I).$$
	Hence, $\Delta(\overline{\I}) \subseteq \Delta(\I)$, and the result is proved.
\end{proof}

\begin{remark}
	\label{rmk.direct}
	Inspired by Proposition \ref{lem.int}, one may ask if there is a direct proof for $(1) \Leftrightarrow (4)$ without going through $(3)$ in Theorem \ref{thm.Noetherian}.
\end{remark}


\section{Newton-Okounkov body and the analytic spread} \label{sec.AS}

In this section, we present a formula that interprets the analytic spread of a graded family $\I$ of monomial ideals in terms of its Newton-Okounkov body when $\I$ is Noetherian. Recall that for a polyhedron $P$, $\mdc(P)$ denotes the maximum dimension of a compact face of $P$.

Our next main result reads as follows.

 \begin{theorem}
 	\label{thm.analyticSpread}
 	Let $\I$ be a Noetherian graded family of monomial ideals in $R$. Let $\Delta(\I)$ and $\ell(\I)$ be its Newton-Okounkov body and analytic spread, respectively. Then,
 	$$\ell(\I) = \mdc(\Delta(\I))+1.$$
 \end{theorem}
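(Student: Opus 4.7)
The plan is to reduce the statement to a single ordinary power of a monomial ideal inside the family, where one can invoke Bivi\`a-Ausina's combinatorial description of the analytic spread.

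Since $\I$ is Noetherian, Theorem \ref{thm.Noetherian} supplies an integer $c$ enjoying the two properties I will exploit: first, $\Delta(\I) = \frac{1}{c}\NP(I_c)$; second, the $c$-th Veronese subalgebra $\R(\I)^{[c]} = \bigoplus_{k\in\NN} I_{kc}\,t^{kc}$ is standard graded over $R$, which is just saying $I_{kc} = I_c^{\,k}$ for every $k \in \NN$. The combinatorial side is then immediate: scaling by $1/c$ is a homeomorphism of $\RR^n$ that sends compact faces to compact faces of the same dimension, so $\mdc(\Delta(\I)) = \mdc(\NP(I_c))$.

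The algebraic side requires comparing the analytic spread of $\I$ with the ordinary analytic spread $\ell(I_c)$. The relation $I_{kc}=I_c^{\,k}$ gives a graded $\kk$-algebra isomorphism $\R(\I)^{[c]} \cong \R(I_c)$ (up to rescaling the grading by $c$), so modding out by $\mm$ identifies the $c$-th Veronese subring of the special fiber $\mathcal{F}(\I) := \R(\I)/\mm\R(\I)$ with the special fiber $\mathcal{F}(I_c) := \R(I_c)/\mm\R(I_c)$. Both are finitely generated graded $\kk$-algebras, hence have the same Krull dimension as their $c$-th Veronese subrings, yielding
$$\ell(\I) = \dim \mathcal{F}(\I) = \dim \mathcal{F}(\I)^{[c]} = \dim \mathcal{F}(I_c) = \ell(I_c).$$
Bivi\`a-Ausina's result \cite{BA2003} for the monomial ideal $I_c$ then gives $\ell(I_c) = \mdc(\NP(I_c)) + 1$, and combining with the scaling identity above completes the proof.

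The step that requires the most care is the dimension comparison between $\mathcal{F}(\I)$ and its Veronese subring $\mathcal{F}(I_c)$: one needs to know that $\mathcal{F}(\I)$ is itself finitely generated (equivalently, that $\R(\I)$ is Noetherian, which holds by hypothesis), and that the special fiber of $\I$ really coincides Veronese-wise with the special fiber of the ideal $I_c$ rather than being strictly larger. The key point is that the identification $\R(\I)^{[c]} \cong \R(I_c)$ takes place inside $R[t]$ and is compatible with the $\mm$-action on the coefficient ring, so passing to the quotient by $\mm$ preserves the identification. Once this is in place, the equality of Krull dimensions between a graded $\kk$-algebra and its Veronese subring is standard, and the rest of the argument is a matter of bookkeeping.
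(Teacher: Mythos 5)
Your proof is correct and follows essentially the same route as the paper's: reduce to the single ideal $I_c$ via a standard-graded $c$-th Veronese subalgebra, identify $\ell(\I)$ with $\ell(I_c)$ by an integrality argument, note $\Delta(\I)=\frac{1}{c}\NP(I_c)$ so the $\mdc$'s agree, and invoke Bivi\`a-Ausina for $\NP(I_c)$. The only cosmetic difference is that you compare Veronese subrings after quotienting by $\mm$ (working with the special fiber rings directly), whereas the paper argues that $\R(\I)$ is a finite module over $\R^{[c]}(\I)=\R(I_c)$ before passing to the quotient.
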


\begin{proof} By \cite[Theorem 2.1]{HHT2007}, since $\I$ is Noetherian, there exists an integer $c$ such that the $c$-th Veronese subalgebra $S = \R^{[c]}(\I)$ of $\R(\I)$ is standard graded. Particularly, we get
	\begin{align}
		I_c^k = I_{kc} \text{ for all } k \in \NN, \label{eq.Ikc}
	\end{align}
	and so, $S = \R(I_c)$ is the Rees algebra of the ideal $I_c$.
	Furthermore, it is easy to see that $\R(\I)$ is integral over $S$, so $\R(\I)$ is a finitely generated $S$-module. Thus,
	$$\ell(\I) = \dim (\R(\I)/\mm \R(\I)) = \dim S/\mm S = \ell(I_c).$$
	
	Observe that (\ref{eq.Ikc}) implies that
	$$\dfrac{1}{c}\NP(I_c) = \dfrac{1}{kc}\NP(I_c^k) = \dfrac{1}{kc}\NP(I_{kc}) \text{ for all } k \in \NN.$$
	By Theorem \ref{lem.NPpoly}, we then get
	$$\Delta(\I) = \dfrac{1}{c} \NP(I_c).$$
	It follows that $\mdc(\Delta(\I)) = \mdc(\NP(I_c))$. The conclusion now follows from \cite[Theorem 2.3]{BA2003}, which established the equality $\ell(I_c) = \mdc(\NP(I_c))+1$.
\end{proof}

\begin{example}
	\label{ex.AS1}
Let $\I$ be a graded family of ideals given in Example \ref{ex.Noetherian}. Then, as observed, $\I$ is Noetherian. Theorem \ref{thm.analyticSpread} now gives $\ell(\I) = \mdc(\Delta(\I)) + 1 = \mdc\left(\NP(I)\right) + 1 = \ell(I)$.
\end{example}

The following example illustrates that when $\I$ is not Noetherian, even if $\Delta(\I)$ is a polyhedron, the formula for analytic spread in Theorem \ref{thm.analyticSpread} does not necessarily hold.

\begin{example} \label{ex.ASnot=mdc}
	Consider the graded family $\mathcal{I}$ given by $I_0 = R$ and $I_n=(a,b)^4(x,y)^n$, for $n \ge 1$. The special fiber ring of $\I$ is
	$$\mathcal{F}(\mathcal{I})=  \frac{R}{\mm} \otimes_R \mathcal{R}(\mathcal{I})= \frac{R}{\mm} \bigoplus \frac{(a,b)^4(x,y)}{\mm(a,b)^4(x,y)} \bigoplus \ldots \bigoplus \frac{(a,b)^4(x,y)^n}{\mm(a,b)^4(x,y)^{n}} \bigoplus \ldots .$$
	
	For every $m,n\ge 1$, if $f \in \mathcal{F}(\mathcal{I})_n = \frac{(a,b)^4(x,y)^n}{\mm(a,b)^4(x,y)^{n}}$ and $g\in \mathcal{F}(\mathcal{I})_n$, since the representative of $fg$ is in $(a,b)^8(x,y)^{m+n} \subseteq \mm(a,b)^4(x,y)^{m+n}$, we have $fg =0$ in $\mathcal{F}(\mathcal{I})$. Thus, for any prime ideal $\pp$ in $\mathcal{F}(\mathcal{I})$, $\pp$ must contain all positively graded elements of $\mathcal{F}(\mathcal{I})$, i.e,
	$$\bigoplus_{n\ge 1}\mathcal{F}(\mathcal{I})_n \subseteq \pp.$$
	On the other hand, $\pp \cap \mathcal{F}(\mathcal{I})_0 = \pp \cap \kk$ is a prime ideal in $\kk$, thus $\pp \cap \kk =0$. Therefore, the only prime ideal in $\mathcal{F}(\mathcal{I})$ is $\bigoplus_{n\ge 1}\mathcal{F}(\mathcal{I})_n$, hence, $\ell(\mathcal{I})=0$.
	
	Note also that for each $n$,
	\[
	\frac{1}{n}\NP(I_n) = \conv \left\{\left(\frac{4}{n},0,1,0\right),\left(\frac{4}{n},0,0,1\right),\left(0,\frac{4}{n},1,0\right),\left(0,\frac{4}{n},0,1\right) \right\} + \RR^n_{\ge 0},
	\]
	and, furthermore, the defining hyperplanes of $\frac{1}{n}\NP(I_n)$ are precisely
	\[
	a\ge 0, b\ge 0, x\ge 0, y\ge 0, a+b\ge \frac{4}{n}, x+y\ge 1.
	\]
	Therefore,
	\[
	\Delta(\mathcal{I})=\overline{\RR^n_{\ge 0} \cap (x+y \ge 1) \cap \left(\bigcup_{n\in \NN} (a+b\ge \frac{4}{n}) \right)} = \RR^n_{\ge 0} \cap (x+y \ge 1),
	\]
	which is a polyhedron with two vertices $(0,0,1,0)$ and $(0,0,0,1)$. Hence, $\mdc(\Delta(\mathcal{I}))=1$, and $\ell(\mathcal{I})\not=\mdc(\Delta(\mathcal{I}))+1$.
\end{example}

We immediately obtain the following result as a consequence of Theorem \ref{thm.analyticSpread}. For a graded family $\I = \{I_k\}_{k \in \NN}$ of monomial ideals, set $\overline{\I} = \{\overline{I_k}\}_{k \in \NN}$. As shown in Proposition \ref{lem.int}, $\overline{\I}$ is also a graded family of monomial ideals.

\begin{corollary}
	\label{cor.AS-int}
	For any Noetherian graded family $\I$ of monomial ideals, we have
	$$\ell(\I) = \ell(\overline{\I}).$$
	Particularly, if $\J$ is any Noetherian graded family of monomial ideals such that $\overline{\I} = \overline{\J}$ then $\ell(\J) = \ell(\I).$
\end{corollary}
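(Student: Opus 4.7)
The plan is to combine the two main structural results developed earlier in the section, Theorem \ref{thm.analyticSpread} and Proposition \ref{lem.int}, together with the characterization of the Noetherian property in Theorem \ref{thm.Noetherian}. Since everything needed is already in place, the argument will be short.

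First I would invoke Theorem \ref{thm.Noetherian} to pass from the Noetherian hypothesis on $\I$ to the Noetherian property of $\R(\overline{\I})$: the theorem says these two conditions are equivalent, so $\overline{\I}$ is itself a Noetherian graded family of monomial ideals (recall from Proposition \ref{lem.int} that $\overline{\I}$ is indeed a graded family). This makes Theorem \ref{thm.analyticSpread} applicable to both $\I$ and $\overline{\I}$.

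Next I would apply Theorem \ref{thm.analyticSpread} twice to obtain the chain of equalities
\[
\ell(\I) \;=\; \mdc(\Delta(\I)) + 1 \;=\; \mdc(\Delta(\overline{\I})) + 1 \;=\; \ell(\overline{\I}),
\]
where the middle equality is exactly the content of Proposition \ref{lem.int}, which asserts $\Delta(\I) = \Delta(\overline{\I})$. This establishes the first statement.

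For the particular claim, suppose $\J$ is another Noetherian graded family of monomial ideals with $\overline{\I} = \overline{\J}$. Applying the first statement to both $\I$ and $\J$ yields $\ell(\I) = \ell(\overline{\I}) = \ell(\overline{\J}) = \ell(\J)$. No genuine obstacle is anticipated: the entire content of the corollary is the observation that analytic spread of a Noetherian graded family of monomial ideals is a function solely of the Newton--Okounkov body, which only depends on the integral closures of the members of the family.
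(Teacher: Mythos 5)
Your proposal is correct and follows exactly the paper's argument: use Theorem \ref{thm.Noetherian} to see that $\overline{\I}$ is Noetherian, Proposition \ref{lem.int} to get $\Delta(\I) = \Delta(\overline{\I})$, and Theorem \ref{thm.analyticSpread} applied to both families, with the second assertion following formally from the first. No issues.
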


\begin{proof} By Proposition \ref{lem.int}, we have $\Delta(\I) = \Delta(\overline{\I})$. The first assertion then follows from Theorems \ref{thm.analyticSpread} and \ref{thm.Noetherian}. The second assertion is a direct consequence of the first one.
\end{proof}


The following corollary to Theorem \ref{thm.analyticSpread} gives a generalization of the combinatorial interpretation for the analytic spread of a monomial ideal, which was due to Bivi\`a-Ausina \cite[Theorem 2.3]{BA2003} (see also \cite[Corollary 4.10]{Sin2007}), to that for the \emph{symbolic} analytic spread.

\begin{corollary}
	\label{cor.ellsG}
	Let $I \subseteq R$ be a monomial ideal. Then,
	$$\ell_s(I) = \mdc(\SP(I))+1.$$
\end{corollary}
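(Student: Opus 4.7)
My plan is to deduce this corollary as a direct application of Theorem \ref{thm.analyticSpread} to the graded family $\I = \{I^{(k)}\}_{k \in \NN}$ of symbolic powers of the monomial ideal $I$. To invoke that theorem, I need two ingredients, both of which are already available in the paper: (i) the family $\I$ is Noetherian, and (ii) its Newton-Okounkov body equals the symbolic polyhedron of $I$.

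For (i), I appeal to the classical result of Herzog--Hibi--Trung \cite[Theorem 3.2]{HHT2007}, which asserts that the symbolic Rees algebra $\R_s(I) = \R(\I)$ of any monomial ideal is a finitely generated (hence Noetherian) $R$-algebra. For (ii), I use the computation recorded in Remark \ref{rmk.NPSP}, which identifies $\Delta(\I)$ with $\SP(I)$ by virtue of \cite[Corollary 3.12]{C+2021}: since $\SP(I)$ is closed, the union $\bigcup_k \tfrac{1}{k}\NP(I^{(k)})$ already equals its closure, and this coincides with $\Delta(\I)$ by Remark \ref{rmk.DeltaNP}.

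With these two ingredients in hand, the conclusion is immediate. By definition, $\ell_s(I) = \ell(\I) = \dim(\R(\I)/\mm\R(\I))$. Theorem \ref{thm.analyticSpread} then yields
\[
\ell_s(I) = \ell(\I) = \mdc(\Delta(\I)) + 1 = \mdc(\SP(I)) + 1,
\]
which is the desired formula.

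There is essentially no obstacle here beyond correctly citing the prerequisites: the heavy lifting has already been done in Theorem \ref{thm.analyticSpread} (which in turn rests on Theorem \ref{lem.NPpoly} and \cite[Theorem 2.3]{BA2003}). The only point that deserves a one-line mention is that the family of symbolic powers is genuinely Noetherian for monomial ideals, so the Noetherian hypothesis of Theorem \ref{thm.analyticSpread} is actually satisfied; without this, the corollary would not even be well-posed via our route. Accordingly, the write-up should be just a few lines referencing \cite[Theorem 3.2]{HHT2007}, Remark \ref{rmk.NPSP}, and Theorem \ref{thm.analyticSpread}.
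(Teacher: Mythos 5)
Your proposal is correct and matches the paper's own argument exactly: the paper derives the corollary directly from Theorem \ref{thm.analyticSpread} and Remark \ref{rmk.NPSP}, with the Noetherian hypothesis supplied by \cite[Theorem 3.2]{HHT2007}, just as you do. Nothing further is needed.
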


\begin{proof} The assertion follows directly from Theorem \ref{thm.analyticSpread} and Remark \ref{rmk.NPSP}.
\end{proof}

\begin{example}
	\label{ex.symbolicAS}
Let $I = (xy, yz, zx) = (x,y) \cap (y,z) \cap (z,x) \subseteq R = \kk[x,y,z]$.
By the description of $\SP(I)$ given in (\ref{eq.SP}), the defining half-spaces for $\SP(I)$ are: $a_1 \ge 0, a_2 \ge 0, a_3 \ge 0, a_1+a_2 \ge 1, a_2+a_3 \ge 1$ and $a_3+a_1 \ge 1$. Thus,
$$\SP(I) = \conv\left\{(1,1,0), (1,0,1), (0,1,1), \left(\frac{1}{2}, \frac{1}{2}, \frac{1}{2}\right)\right\} + \RR^3_{\ge 0}.$$

It is easy to see that the compact faces of maximum dimension of $\SP(I)$ are the lines connecting $\left(\frac{1}{2}, \frac{1}{2}, \frac{1}{2}\right)$ and any of the other three vertices. These faces all have dimension 1. Corollary \ref{cor.ellsG} now gives $\ell_s(I) = 2$.
\end{example}

\begin{remark}
    \label{rmk.SymbAnaSpread1}
For $I = \cap_{\pp \in \Min(I)} \pp^{\omega_\pp}$, we have that $\ell_s(I)\ge 1$ and $\ell_s(I)=1$ if and only if $\SP(I)$ has only one vertices (otherwise some segment through $2$ vertices gives us a compact face of dimension $1$), if and only if $I$ is a principal ideal. In this case, $I^{(k)}=I^k$ for all $k$.
\end{remark}

\begin{corollary}
    \label{cor.SymbAnaLessN}
Let $n\ge 2$ and $I = \cap_{\pp \in \Min(I)} \pp^{\omega_\pp}$ be a monomial ideal in $S$ such that $\mm \not \in \Ass(I)$. Then $$\ell_s(I) \le n-1=\dim(S)-1.$$
In particular, $\ell_s(I)=\dim(S)$ if and only if $\mm\in \Ass(I)$.
\end{corollary}
\begin{proof}
Since $I\not = \mm$, all defining hyperplanes of $\SP(I)$ give us noncompact faces. Hence, there is no compact face that has maximum dimension $n-1$. It follows that obvious that $\ell_s(I)= \mdc(\SP(I)) +1 \le n-2 +1=n-1$. Moreover, $\ell_s(I)=\dim(S)=n$ if and only if $SP(I)$ has a compact face of dimension $n-1$ if and only if $\mm \in \Ass(I)$.
\end{proof}

When $\R(\I)$ is not necessarily Noetherian, the proof of Theorem \ref{thm.analyticSpread} no longer works. It was shown recently by Cutkosky and Sarkar in \cite[Lemma 3.6]{CS2021} that if $\I$ is a filtration of ideals in a local ring $R$ then $\ell(\I)$ exists and is bounded above by the dimension of $R$. Their arguments in fact work for any graded family of ideals in a polynomial ring, as seen below. That is, the invariant $\ell(\I)$ in Definition \ref{def.AS} is always finite.

Recall from \cite[Definition 3.2]{CS2021} that, for a graded family $\I = \{I_k\}_{k \in \NN}$ of homogeneous ideals in $R$ and a positive integer $a$, the \emph{$a$-th truncation} of $\I$ is the graded family $\I_a = \{I_{a,k}\}_{k \in \NN}$, where
$$I_{a,k} = \left\{\begin{array}{lll} I_k & \text{if} & k \le a \\ \sum_{\substack{i,j > 0 \\ i+j=k}} I_{a,i}I_{a,j} & \text{if} & n > a. \end{array}\right.$$
Clearly, by the definition, the Rees algebra $\R(\I_a)$ is finitely generated for any $a \in \NN$. It follows by, for instance, \cite[Theorem 2.1]{HHT2007} that there exists an integer $e_a \in \NN$ such that the $e_a$-th Veronese subalgebra $S_a = \R^{[e_a]}(\I_a)$ of $\R(\I_a)$ is standard graded and $\R(\I_a)$ is a finite module over $S_a$. We shall define a similar but slightly different notion of truncation.

\begin{definition}
	\label{def.truncation}
	Let $\I = \{I_k\}_{k  \in \NN}$ be a graded family of homogeneous ideals in $R$ and let $a \in \NN$. Define the \emph{$a$-th upper truncation} of $\I$ to be the graded family $\I_a^* = \{I_{a,k}^*\}_{k \in \NN}$, where
	$$I_{a,k}^* = \left\{\begin{array}{lll} I_k & \text{if} & k \le a \\ \left(I_{e_a}\right)^r & \text{if} & k = re_a, r \in \NN \\ \sum_{\substack{i,j > 0 \\ i+j=k}} I_{a,i}^* I_{a,j}^* & \text{if} & k \text{ is otherwise}. \end{array}\right.$$
\end{definition}

\begin{lemma}
	\label{lem.gradedI*}
	The family $\I^*_a$ constructed in Definition \ref{def.truncation} is a graded family of ideals.
\end{lemma}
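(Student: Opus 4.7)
The plan is to find an explicit closed-form description of each ideal $I^*_{a,k}$ that makes the graded axiom transparent. Writing $\I_a = \{I_{a,m}\}_{m \in \NN}$ for the Cutkosky--Sarkar truncation (with the convention $I_{a,0} := R$), I will establish by strong induction on $k$ the identity
\begin{equation*}
I^*_{a,k} \;=\; \sum_{j=0}^{\lfloor k/e_a\rfloor} (I_{e_a})^{j}\cdot I_{a,\,k-je_a}. \tag{$\star$}
\end{equation*}
Conceptually, $(\star)$ says that the $k$-th piece of $\I^*_a$ is exactly the degree-$k$ piece, in the $t$-grading, of the subring $\R(\I_a)[\,I_{e_a}t^{e_a}\,]\subseteq R[t]$. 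Once $(\star)$ is in hand, the graded property of $\I^*_a$ is inherited from the subring structure.

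To verify $(\star)$, I would split along the three cases of Definition \ref{def.truncation}. For $k\le a$, the $j=0$ term contributes $I_{a,k}=I_k$; each $j\ge 1$ term satisfies $(I_{e_a})^j I_{a,k-je_a}\subseteq I_{je_a}I_{k-je_a}\subseteq I_k$ by the graded property of $\I$, so the sum collapses to $I_k=I^*_{a,k}$. For $k=re_a>a$, the $j=r$ term is exactly $(I_{e_a})^r$, and for $0\le j<r$ the standard-graded Veronese identity $I_{a,(r-j)e_a}=(I_{a,e_a})^{r-j}$ together with $I_{a,e_a}\subseteq I_{e_a}$ gives $(I_{e_a})^j I_{a,(r-j)e_a}\subseteq (I_{e_a})^r$, so the sum equals $(I_{e_a})^r=I^*_{a,k}$. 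For $k>a$ with $e_a\nmid k$, I would substitute the inductive form of $(\star)$ into $I^*_{a,k}=\sum_{i+l=k} I^*_{a,i}I^*_{a,l}$: the inclusion $\subseteq$ into the right-hand side of $(\star)$ comes from $I_{a,m}I_{a,n}\subseteq I_{a,m+n}$, and the reverse inclusion uses $I_{a,m}\subseteq I^*_{a,m}$ (read off from the $j=0$ summand of $(\star)$ at $m$) to display each summand $(I_{e_a})^j I_{a,k-je_a}$ as $I^*_{a,je_a}\cdot I^*_{a,k-je_a}$, which already appears in the recursive sum defining $I^*_{a,k}$.

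Granted $(\star)$, the graded property is a one-line calculation:
\begin{equation*}
I^*_{a,p}\cdot I^*_{a,q}=\sum_{j_1,j_2}(I_{e_a})^{j_1+j_2}I_{a,p-j_1e_a}I_{a,q-j_2e_a}\subseteq\sum_{J}(I_{e_a})^{J}I_{a,(p+q)-Je_a}=I^*_{a,p+q},
\end{equation*}
using $I_{a,m}I_{a,n}\subseteq I_{a,m+n}$ in the middle step and re-indexing $J=j_1+j_2$.

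The main obstacle, and the reason $\I^*_a$ requires a genuine argument rather than being trivially $\I_a$, is the case $e_a>a$: then $I_{e_a}$ from the original family can strictly contain $I_{a,e_a}$, so the second case of Definition \ref{def.truncation} genuinely enlarges $I_{a,re_a}=(I_{a,e_a})^r$ to $(I_{e_a})^r$, and one must show that these enlargements propagate through the recursion in a mutually consistent way. The formula $(\star)$ resolves this by packaging the extra room from $I_{e_a}$ via the inclusion chain $I_{a,(r-j)e_a}=(I_{a,e_a})^{r-j}\subseteq (I_{e_a})^{r-j}$, which prevents any term $(I_{e_a})^j I_{a,(r-j)e_a}$ from escaping $(I_{e_a})^r$ at degrees that are multiples of $e_a$ and so keeps the second and third cases of the definition compatible at every such degree.
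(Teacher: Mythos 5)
Your proof is correct, and it takes a more systematic route than the paper's. The paper works directly with the recursion: the containment $I^*_{a,k}I^*_{a,l}\subseteq I^*_{a,k+l}$ is automatic from the third case of Definition \ref{def.truncation} unless $k+l$ is a multiple of $e_a$ (or $k+l\le a$, where gradedness of $\I$ applies), so it reduces to $k+l=re_a$, observes that $I^*_{a,k}$ and $I^*_{a,l}$ are generated by $I_1,\dots,I_a$ and $I_{e_a}$, and reduces further to the case $k,l\le e_a$, attributing the remaining containment to the gradedness of $\I$. Your formula $(\star)$ is exactly the statement underlying that generation remark --- it identifies $\I^*_a$ with the family of graded pieces of the $R$-subalgebra of $R[t]$ generated by $I_1t,\dots,I_at^a$ and $I_{e_a}t^{e_a}$ --- but your verification makes explicit the one step that is genuinely nontrivial and that the paper's proof passes over: showing that a product of small pieces of total degree $re_a$, say $(I_{e_a})^{j}I_{a,(r-j)e_a}$, lands in $(I_{e_a})^{r}$ rather than merely in $I_{re_a}$ requires the \emph{defining property of $e_a$}, namely the standard-gradedness $I_{a,me_a}=(I_{a,e_a})^{m}$ of the $e_a$-th Veronese subalgebra of $\R(\I_a)$ combined with $I_{a,e_a}\subseteq I_{e_a}$; gradedness of $\I$ alone does not yield this (for instance $I_3I_3I_2\subseteq I_8$ says nothing about containment in $(I_4)^2$). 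So your approach buys a transparent final calculation and pinpoints where the choice of $e_a$ is actually used, at the cost of having to prove the closed form first. Two small points to record in a full write-up: the fact that $\I_a$ itself is a graded family, i.e.\ $I_{a,m}I_{a,n}\subseteq I_{a,m+n}$ (immediate from the recursion and the gradedness of $\I$, but used twice); and, in the reverse inclusion of your third case, that $j\ge 1$ together with $e_a\nmid k$ forces $1\le je_a<k$ and $k-je_a\ge 1$, so that $I^*_{a,je_a}\cdot I^*_{a,k-je_a}$ genuinely occurs as a summand in the recursive definition of $I^*_{a,k}$.
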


\begin{proof} It suffices to show that for any positive integers $k$ and $l$ such that $k+l$ is a multiple of $e_a$, we have
	\begin{align}
		I^*_{a,k}I^*_{a,l} \subseteq I^*_{a,k+l}. \label{eq.gradedI*}
	\end{align}
	Clearly, $I^*_{a,k}$ and $I^*_{a,l}$ are generated by $I_1, \dots, I_a$ and $I_{e_a}$. Thus, it is enough to prove (\ref{eq.gradedI*}) when $k$ and $l$ are both at most $e_a$. Furthermore, in this case, (\ref{eq.gradedI*}) holds because $\I$ is a graded family.
\end{proof}

Again, it is straightforward from the definition that $\R(\I_a^*)$ is finitely generated and $\R^{[e_a]}(\I_a^*) = \R(I_{e_a})$ is a standard graded algebra.

We shall make use of the following technical lemma from \cite{CS2021}.

\begin{lemma}
	\label{lem.CS2021}
	Let $\A$ be a $\NN$-graded ring. Suppose that $\{\A_a\}_{a \in \NN}$ is a collection of Noetherian graded rings with the same grading as $\A$, $\max\{\dim \A_a ~\big|~ a \in \NN\} < \infty, \A_{a,k} = \A_k$ for all $k \ge a$ and all $a \in \NN$, and there is a graded homomorphism $\phi_a : \A_a \rightarrow \A$ such that $\phi_a(x) = x$ for all homogeneous elements of $\A_a$ of degree at most $a$. Then,
	$$\dim \A \le \max \{\dim \A_a ~\big|~ a \in \NN\}.$$
\end{lemma}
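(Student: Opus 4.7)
The plan is to show that every strictly ascending chain of homogeneous prime ideals in $\A$ pulls back, via some $\phi_a$, to a chain of equal length in $\A_a$. Since $d := \max_{a \in \NN} \dim \A_a$ is finite by hypothesis, the length of every such chain is bounded by $d$. Because $\A$ is $\NN$-graded (and in the intended application to the special fiber ring $\R(\I) \otimes_R R/\mm$ is a graded $\kk$-algebra), its Krull dimension is realized by chains of homogeneous primes, and this will yield $\dim \A \le d$.

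To carry out the pullback, fix a chain of homogeneous primes $\pp_0 \subsetneq \pp_1 \subsetneq \dots \subsetneq \pp_s$ in $\A$. Because each $\pp_i$ is homogeneous and strictly contains $\pp_{i-1}$, for every $i \in \{1, \dots, s\}$ we may choose a homogeneous element $f_i \in \pp_i \setminus \pp_{i-1}$. Set $a := \max_{1 \le i \le s} \deg(f_i)$. The hypothesis that $\phi_a$ acts as the identity on homogeneous components of $\A_a$ of degree at most $a$, together with the agreement $\A_{a,k} = \A_k$ in those degrees, produces for each $i$ a lift $\tilde f_i \in \A_a$ of degree $\deg(f_i)$ satisfying $\phi_a(\tilde f_i) = f_i$. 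Define $\qq_i := \phi_a^{-1}(\pp_i)$; each $\qq_i$ is a prime of $\A_a$, and the inclusions $\qq_0 \subseteq \qq_1 \subseteq \dots \subseteq \qq_s$ are strict because $\tilde f_i \in \qq_i \setminus \qq_{i-1}$, as witnessed by $\phi_a(\tilde f_i) = f_i \in \pp_i \setminus \pp_{i-1}$. Therefore $s \le \dim \A_a \le d$, as needed.

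The subtle point, which I expect to be the main obstacle, is the initial reduction to chains of homogeneous primes when computing $\dim \A$, especially since $\A$ is not a priori Noetherian. For a $\NN$-graded algebra over a field, one invokes the classical fact that for any prime $\pp \subset \A$ the ideal $\pp^*$ generated by the homogeneous elements of $\pp$ is itself a prime ideal of $\A$, and that an arbitrary chain of primes can be refined to a chain of homogeneous primes of the same length. This ensures that the supremum over lengths of chains of homogeneous primes coincides with $\dim \A$, so the pulled-back bound above transfers to a bound on $\dim \A$ itself, completing the argument.
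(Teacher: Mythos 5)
The paper does not actually prove this lemma --- it is quoted from Cutkosky--Sarkar \cite{CS2021} and used as a black box --- so there is no internal proof to compare against; I will assess your argument on its own terms. Your central mechanism is the right one and is essentially what \cite{CS2021} does: given a strictly increasing chain of primes in $\A$, choose witnesses $f_i\in\pp_i\setminus\pp_{i-1}$, pick $a$ large enough that the $f_i$ lie in the range where $\phi_a$ is the identity, and pull the chain back through $\phi_a$ to a strict chain in the Noetherian ring $\A_a$, whose dimension is uniformly bounded.

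The gap is in your reduction to \emph{homogeneous} primes, which you correctly flag as the delicate point but then dispatch with a ``classical fact'' that is false as stated: an arbitrary chain of primes in a graded ring cannot in general be refined to a chain of homogeneous primes of the same length by any obvious operation. For instance, in $\kk[x,y]$ the chain $(x-1)\subsetneq(x-1,y-1)$ collapses entirely under $\pp\mapsto\pp^*$. The true classical statements are that $\pp^*$ is prime and that $\height\pp\le\height\pp^*+1$, and upgrading these to ``$\dim\A$ is computed by chains of homogeneous primes'' for an $\NN$-graded ring takes genuine work --- the standard references (e.g.\ Bruns--Herzog, Theorem 1.5.8) prove it under a Noetherian hypothesis, which $\A$ here does not satisfy (indeed $\A=\R(\I)/\mm\R(\I)$ is exactly the ring whose Noetherianity is in question). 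Fortunately the entire reduction is unnecessary. Take an \emph{arbitrary} chain $\pp_0\subsetneq\cdots\subsetneq\pp_s$ and arbitrary (not necessarily homogeneous) $f_i\in\pp_i\setminus\pp_{i-1}$. Each $f_i$ is a finite sum of homogeneous components $f_{i,k}\in\A_k$; choosing $a$ larger than every degree occurring, each $f_{i,k}$ lies in $\A_{a,k}=\A_k$, so $\tilde f_i:=\sum_k f_{i,k}\in\A_a$ satisfies $\phi_a(\tilde f_i)=f_i$. Then $\qq_i:=\phi_a^{-1}(\pp_i)$ is a strictly increasing chain of primes in $\A_a$, since $\tilde f_i\in\qq_i\setminus\qq_{i-1}$, and $s\le\dim\A_a\le\max_a\dim\A_a$ follows directly, with no appeal to graded dimension theory. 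With that repair your proof is complete.
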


By applying Lemma \ref{lem.CS2021} in essentially the same way as in \cite{CS2021}, we obtain a slight improvement of \cite[Lemma 3.6]{CS2021}.

\begin{proposition}
	\label{lem.CS3.6}
	Let $\I$ be a graded family of homogeneous ideals in $R$ and let $\I_a^*$ be its $a$-th upper truncation, for $a \in \NN$. Then,
	$$\ell(\I) \le \max\{\ell(\I_a^*) ~\big|~ a \in \NN\} = \max\{\ell(I_{e_a}) ~\big|~ a \in \NN\}.$$
	In particular, we have $\ell(\I) \le \dim R$.
\end{proposition}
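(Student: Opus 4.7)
The plan is to apply Lemma \ref{lem.CS2021} with $\A = \R(\I)/\mm\R(\I)$ and, for each $a \in \NN$, $\A_a = \R(\I_a^*)/\mm\R(\I_a^*)$, so that $\dim \A = \ell(\I)$ and $\dim \A_a = \ell(\I_a^*)$. To invoke the lemma, I must verify three things: (i) each $\A_a$ is a Noetherian graded $\kk$-algebra; (ii) the uniform bound $\max_a \dim \A_a < \infty$ holds; and (iii) there are graded $\kk$-algebra homomorphisms $\phi_a : \A_a \to \A$ that restrict to the identity on the low-degree components (those of degree $\le a$, on which $\I_a^*$ and $\I$ agree by construction).

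First, by Definition \ref{def.truncation}, $\R(\I_a^*)$ is generated as an $R$-algebra by (finite generating sets of) $I_1,\dots,I_a,I_{e_a}$, so it is a finitely generated $R$-algebra and hence Noetherian; the same is then true of $\A_a$. Moreover, the $e_a$-th Veronese subalgebra of $\R(\I_a^*)$ is exactly $\R(I_{e_a})$, since $I^*_{a,re_a} = (I_{e_a})^r$ for every $r \in \NN$, and this Veronese is standard graded. Since $\R(\I_a^*)$ is integral over $\R(I_{e_a})$ and is finitely generated as a module over it, the argument used in the proof of Theorem \ref{thm.analyticSpread} yields the equality
$$\ell(\I_a^*) \;=\; \dim\bigl(\R(\I_a^*)/\mm\R(\I_a^*)\bigr) \;=\; \dim\bigl(\R(I_{e_a})/\mm\R(I_{e_a})\bigr) \;=\; \ell(I_{e_a}).$$
Because the analytic spread of any ideal in the polynomial ring $R$ is bounded above by $\dim R$, this gives the uniform bound $\dim \A_a = \ell(I_{e_a}) \le \dim R$ for every $a$.

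Next, I will verify the containment $I^*_{a,k} \subseteq I_k$ for every $k \in \NN$: it is an equality when $k \le a$; when $k = re_a$, it follows from $(I_{e_a})^r \subseteq I_{re_a}$ (a consequence of $\I$ being a graded family); and in the remaining cases a straightforward induction gives $I^*_{a,k} = \sum_{i+j=k} I^*_{a,i} I^*_{a,j} \subseteq \sum_{i+j=k} I_i I_j \subseteq I_k$. These inclusions induce a graded $R$-algebra map $\R(\I_a^*) \hookrightarrow \R(\I)$ sending $f t^k \mapsto f t^k$, which in turn descends to the desired graded $\kk$-algebra homomorphism $\phi_a: \A_a \to \A$. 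Since $I^*_{a,k} = I_k$ for $k \le a$, the map $\phi_a$ is the identity on each homogeneous component of degree $\le a$, as required by Lemma \ref{lem.CS2021}. Applying that lemma then yields
$$\ell(\I) \;=\; \dim \A \;\le\; \max_{a \in \NN} \dim \A_a \;=\; \max_{a \in \NN}\ell(\I_a^*) \;=\; \max_{a \in \NN}\ell(I_{e_a}),$$
and the concluding bound $\ell(\I) \le \dim R$ is immediate from $\ell(I_{e_a}) \le \dim R$.

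The main obstacle here is essentially bookkeeping: one must carefully reconcile the three-case piecewise definition of $I^*_{a,k}$ with the hypothesis of Lemma \ref{lem.CS2021}, checking that the ideals in the second and third cases sit inside $I_k$ (so that the inclusion $\R(\I_a^*) \hookrightarrow \R(\I)$ is well-defined) and that the $e_a$-th Veronese of $\R(\I_a^*)$ really is the standard graded Rees algebra $\R(I_{e_a})$ (so that the reduction to the ordinary analytic spread $\ell(I_{e_a})$ of a single ideal is legitimate). Once these compatibilities are in hand, the rest of the proof is a formal application of Lemma \ref{lem.CS2021}, mirroring the argument of Cutkosky--Sarkar.
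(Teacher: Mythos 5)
Your proof is correct and follows essentially the same route as the paper: both apply Lemma \ref{lem.CS2021} with $\A = \R(\I)/\mm\R(\I)$ and $\A_a = \R(\I_a^*)/\mm\R(\I_a^*)$, use the inclusion $I^*_{a,k} \subseteq I_k$ to define $\phi_a$, and obtain $\ell(\I_a^*) = \ell(I_{e_a})$ from the fact that $\R(\I_a^*)$ is a finite module over its $e_a$-th Veronese $\R(I_{e_a})$. Your write-up merely fills in the routine verifications that the paper leaves implicit.
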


\begin{proof} Let $\A = \R(\I)/\mm\R(\I)$ and $\A_a = \R(\I_a^*)/\mm\R(\I_a^*)$. The homomorphism $\phi_a: \A_a \rightarrow \A$, defined by $\phi_a(x+\mm I_{a,k}^*) = x + \mm I_k$ for homogeneous elements $x \in I_{a,k}^*/\mm I_{a,k}^*$, is a graded ring homomorphism that satisfies the condition of Lemma \ref{lem.CS2021}. Thus, by Lemma \ref{lem.CS2021}, we have
	$$\ell(\I) = \dim \A \le \max\{\dim \A_a ~\big|~ a \in \NN\} = \max \{\ell(\I_a^*) ~\big|~ a \in \NN\}.$$
	Furthermore, since $\R(\I_a^*)$ is a finite module over $\R^{[e_a]}(\I_a^*)$, it follows that $\R(\I_a^*)/\mm \R(\I_a^*)$ is an integral extension of $\R(I_{e_a})/\mm \R(I_{e_a})$. Therefore, $\ell(\I_a^*) = \ell(I_{e_a})$, and the first assertion is proved.
	
	The second assertion follows since, for any $a \in \NN$, $\ell(I_{e_a}) \le \dim R$ is a well known fact.
\end{proof}

\begin{example}
		Consider the graded family $\I$ as in Example \ref{ex.ASnot=mdc}. Since for each $n$, $\NP(I_n)$ has four vertices as above discussion, one can see that $\mdc(\NP(I_n))=2$, hence $\ell(I_n)=3$ for every $n\in \NN$. Thus, $\ell(\mathcal{I})<\ell(I_n)$ for all $n \in \NN$. That is, the first inequality in Proposition \ref{lem.CS3.6} may be a strict inequality.
\end{example}

\begin{remark}
	\label{rmk.ellNoetherian}
	It is easily seen that if $\R(\I)$ is Noetherian then so is $\R(I)/\mm\R(\I)$. Thus, in this case, the Hilbert function of the graded $\kk$-algebra $\R(\I)/\mm\R(\I)$ is eventually a periodic polynomial of degree equal to $\dim \R(\I)/\mm\R(\I) - 1 = \ell(\I)-1$. Furthermore, $\R(\I)/\mm\R(\I) = \R(\I) \otimes_R R/\mm$. Therefore, the Hilbert function of $\R(\I)/\mm\R(\I)$ at degree $k$ measures exactly the minimum number of generators for $I_k$. Particularly, we get
	$$\ell(\I) = \min\{t \in \RR ~\big|~ \mu(I_k) = O(k^{t-1})\}.$$
\end{remark}

When the Rees algebra $\R(\I)$ is not known to be Noetherian, we can still use the right hand side of the last equality to define a new invariant associated to the graded family $\I$; though it is not known in general when this new invariant is the same as $\ell(\I)$.

\begin{definition}
	\label{def.analyticSpread*}
	Let $\I$ be a graded family of ideals in $R$. Define
	$$\ell^*(\I) = \min\{t \in \RR ~\big|~ \mu(I_k) = O(k^{t-1})\}.$$
\end{definition}


It is not clear if for any given graded family $\I$ of ideals (or monomial ideals) in $R$, $\ell^*(\I)$ is finite.
The following example shows that $\ell(\I) \not= \ell^*(\I)$ in general.

\begin{example}
\label{ex.ell*notell}
    Consider the graded family $\I$ as in Example \ref{ex.ASnot=mdc}. By a simple count, we have $\mu(I_n)=5(n+1)$ for every $n$, hence $\ell^*(\I)=2 \ne \ell(\I)$.
\end{example}

\begin{question}
	\label{question.ell}
	For what graded families $\I$ of monomial ideals that we have $\ell(\I) = \ell^*(\I)$?
\end{question}

Inspired by Proposition \ref{lem.CS3.6} and in connection to Question \ref{question.ell}, we raise the following question.

\begin{question}
	\label{quest.ell*}
	Let $\I$ be a graded family of monomial ideals. Is $\ell^*(\I) \le \dim R$?
\end{question}

Example \ref{ex.ASnot=mdc} provides a family that fails $\ell(\mathcal{I})\not=\mdc(\Delta(\mathcal{I}))+1$ even when $\Delta(\I)$ is a polyhedron. Nevertheless, we saw in Example \ref{ex.ell*notell}, that $\ell^*(\mathcal{I})=2=\mdc(\Delta(\mathcal{I}))+1$. This motivates the question of when $\ell^*(\mathcal{I})=\mdc(\Delta(\mathcal{I}))+1$. We shall provide instances where this formula holds even though $\I$ is not Noetherian.

Recall that an ideal $I$ is called \emph{$\mm$-full} if $\mm I : x = I$ for some $x \in \mm$. Particularly, integrally closed monomial ideals are $\mm$-full (see, for instance, \cite[Theorem 2.4]{Goto1987}).

\begin{proposition}
	\label{prop.boundell*}
	Let $\I = \{I_k\}_{k \in \NN}$ be a graded family of $\mm$-primary, $\mm$-full monomial ideals. Suppose that $\Delta(\I)$ is a rational polyhedron. Then,
	$$\ell^*(\I) = \mdc(\Delta(\I)) + 1 = \dim R.$$
\end{proposition}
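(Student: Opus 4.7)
The plan is to prove the two claimed equalities separately. For the equality $\mdc(\Delta(\I)) + 1 = \dim R$: since each $I_k$ is $\mm$-primary, $I_1 \supseteq \mm^N$ for some $N \in \NN$, and the graded-family property gives $I_k \supseteq I_1^k \supseteq \mm^{Nk}$ for every $k$. Passing to Newton polyhedra and taking closures yields $\Delta(\I) \supseteq N \cdot \conv(e_1,\ldots,e_n) + \RR^n_{\ge 0}$. Since $\Delta(\I) \subsetneq \RR^n_{\ge 0}$ is a rational polyhedron containing such a translate of the orthant, its boundary portion facing the origin is a union of compact facets of dimension $n-1$, and thus $\mdc(\Delta(\I)) = n - 1 = \dim R - 1$.

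For the upper bound $\ell^*(\I) \le n$: the inclusion $I_k \supseteq \mm^{Nk}$ forces every minimal monomial generator $x^a$ of $I_k$ to satisfy $|a|_1 \le Nk$, since for $|a|_1 > Nk$ the proper divisor $x^{a-e_j}$ (for any $j$ with $a_j > 0$) still lies in $\mm^{Nk} \subseteq I_k$. The minimal generators of $I_k$ thus form an antichain inside the poset $\{a \in \NN^n : |a|_1 \le Nk\}$ ordered componentwise, and a standard combinatorial argument (push each element up to the top level along $e_1$, yielding an injection) shows this antichain has size at most $\binom{Nk+n-1}{n-1}$, the top-level cardinality. Hence $\mu(I_k) \le \binom{Nk+n-1}{n-1} = O(k^{n-1})$, giving $\ell^*(\I) \le n$.

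For the lower bound $\ell^*(\I) \ge n$, the $\mm$-full hypothesis is essential. By a standard comparison principle for $\mm$-full ideals (cf.\ \cite{Goto1987}), any $\mm$-full ideal $I$ satisfies $\mu(I) \ge \mu(\overline{I})$; applied to $I_k$ this gives $\mu(I_k) \ge \mu(\overline{I_k})$. Since $\overline{I_k}$ is integrally closed, $\mu(\overline{I_k})$ equals the number of minimal lattice points in $\NP(I_k) \cap \NN^n$, and these lie on the compact ``southwest'' boundary of $\NP(I_k)$. To see this count is $\Omega(k^{n-1})$, use $\NP(I_k) \subseteq k\Delta(\I)$ (from $\frac{1}{k}\NP(I_k) \subseteq \Delta(\I)$) together with a standard convex-geometry estimate: the inclusion forces the southwest boundary of $\NP(I_k)$ to have $(n-1)$-dim volume at least that of the corresponding boundary of $k\Delta(\I)$, which is $\Theta(k^{n-1})$ thanks to the compact $(n-1)$-dim facet of $\Delta(\I)$ obtained in the first paragraph. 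A standard Ehrhart-type estimate then produces $\Omega(k^{n-1})$ lattice points on this boundary.

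The principal obstacle is this final lattice-point count, which requires relating the number of minimal lattice points of $\NP(I_k)$ to the $(n-1)$-dim volume of its compact facets in a way that is uniform in $k$. One must handle carefully the possibility that $\frac{1}{k}\NP(I_k)$ is strictly smaller than $\Delta(\I)$ for every $k$ (as in Example \ref{ex.nonpolyP}, where $\R(\I)$ is not Noetherian even though $\Delta(\I)$ is polyhedral), so the convergence is only Hausdorff-asymptotic; the sandwich $k\NP(I_1) \subseteq \NP(I_k) \subseteq k\Delta(\I)$ pins down the southwest boundary on both sides and makes the quantitative estimate possible.
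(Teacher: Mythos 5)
Your first paragraph and your upper bound are fine: the complement of $\Delta(\I)$ in the orthant is bounded because $I_k\supseteq\mm^{Nk}$, giving $\mdc(\Delta(\I))=n-1$; and the antichain count $\mu(I_k)\le\binom{Nk+n-1}{n-1}$ is a correct and more elementary route to $\ell^*(\I)\le n$ than the paper's (which instead applies Goto's lemma \cite[Lemma 2.2]{Goto1987} to the $\mm$-full ideal $\mm^{dk}\subseteq I_k$). Reducing the lower bound to $\mu(\overline{I_k})\le\mu(I_k)$ via $\mm$-fullness is also the right kind of move.

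The gap is the claim $\mu(\overline{I_k})=\Omega(k^{n-1})$. Two problems. First, the asserted ``standard convex-geometry estimate'' --- that $\NP(I_k)\subseteq k\Delta(\I)$ forces the compact boundary of $\NP(I_k)$ to have $(n-1)$-volume at least that of $k\Delta(\I)$ --- is false as stated: an inner up-set can have strictly smaller compact boundary. Second, and more seriously, surface area does not control the number of \emph{minimal} lattice points: for instance $\conv\{(0,k),(1,1),(k,0)\}+\RR^2_{\ge0}$ has compact boundary of length $\Theta(k)$ but only three minimal lattice points. The sandwich $k\NP(I_1)\subseteq\NP(I_k)\subseteq k\Delta(\I)$ does rule out that particular shape, but converting it into a uniform $\Omega(k^{n-1})$ count of minimal lattice points for a polyhedron that \emph{varies} with $k$ is precisely the hard step, and ``a standard Ehrhart-type estimate'' does not supply it --- Ehrhart theory counts lattice points of a fixed polytope dilated by $k$, whereas $\overline{\I}$ need not be Noetherian even when $\Delta(\I)$ is polyhedral (Example \ref{ex.nonpolyP}), so $\frac{1}{k}\NP(I_k)$ never stabilizes. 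The paper sidesteps this entirely: it compares $I_k$ not with $\overline{I_k}$ but with the larger ideal $I_k^*=\langle x^\a \mid \a\in k\Delta(\I)\cap\ZZ^n\rangle$. Since $\Delta(\I)$ is a fixed rational polyhedron, $\I^*$ is a Noetherian graded family by Theorem \ref{thm.Noetherian}, hence $\ell^*(\I^*)=\ell(\I^*)=\mdc(\Delta(\I))+1=n$ by Theorem \ref{thm.analyticSpread} and Remark \ref{rmk.ellNoetherian} (the Hilbert polynomial of the fiber cone does the lattice-point counting for you), and Goto's lemma applied to $I_k\subseteq I_k^*$ gives $\mu(I_k^*)\le\mu(I_k)$. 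Replacing $\overline{I_k}$ by $I_k^*$ in your comparison repairs the argument; as written, the lower bound is not proved.
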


\begin{proof} Observe that, since $I_k$ is $\mm$-primary, $\RR^n_{\ge 0} \setminus \frac{1}{k}\NP(I_k)$ is a bounded set for all $k \in \NN$. This implies that $\RR^n_{\ge 0} \setminus \Delta(\I)$ is a bounded set. Thus, $\mdc(\Delta(\I)) = n-1$, i.e., $\mdc(\Delta(\I)) + 1 = \dim R$.
	
Define $\I^* = \{I_k^*\}_{k \in \NN}$ to be the graded family, as in Example \ref{ex.nonpolyDelta}, given by
	$$I_k^* = \langle \{x^\a ~\big|~ \a \in k\Delta(\I) \cap \ZZ^n\}\rangle.$$
	As shown in \cite[Proposition 2]{Wol2008}, $\I^*$ is a graded family of monomial ideals and
	$$\bigcup_{k \in \NN} \frac{1}{k}\NP(I_k^*) = \Delta(\I^*) = \Delta(\I).$$
	Particularly, this implies that the Rees algebra $\R(\I^*)$ is Noetherian, by Theorem \ref{thm.Noetherian}. Therefore, by Theorem \ref{thm.analyticSpread}, we have
	$$\ell^*(\I^*) = \ell(\I^*) = \mdc(\Delta(\I^*))+1 = n.$$
	Obviously, we also have $I_k \subseteq I_k^*$ for all $k \in \NN$. Since $I_k$ and $I_k^*$ are both $\mm$-primary, $\lambda(I_k^*/I_k) < \infty$. Furthermore, $I_k$ is $\mm$-full, so it follows from \cite[Lemma 2.2]{Goto1987} that $\mu(I_k^*) \le \mu(I_k)$ for all $k \in \NN$. Hence, $\ell^*(\I^*) \le \ell^*(\I)$, and we have $\ell^*(\I) \ge n.$ \par
    \vspace{0.5em}
	
	On the other hand, since $I_1$ is $\mm$-primary, there exists an integer $d$ such that $\mm^d \subseteq I_1$. It follows that $\mm^{dk} \subseteq (I_1)^k \subseteq I_k$ for all $k \in \NN$. Observe that $\mm^{dk}$ is integrally closed, so it is $\mm$-full. Clearly, $\lambda(I_k/\mm^{dk}) < \infty$. Therefore, again by \cite[Lemma 2.2]{Goto1987}, we then have $\mu(I_k) \le \mu(\mm^{dk})$ for all $k \in \NN$. Moreover, it can be seen that $\mu(\mm^{dk}) = O(k^{n-1})$. Hence, $\ell^*(\I) \le n$. The result is established.
\end{proof}



We continue by considering the invariant $\ell^*(\I)$ for a slightly more general case, where the Newton-Okounkov body is defined with respect to a \emph{good} valuation. We refer the interested reader to \cite{Cut2013, KK2014} for the definition and existence of good valuations, when $R$ is a regular local ring.




\begin{definition}
	Let $v$ be a good valuation of $R$ and let $\I = \{I_k\}_{k \in \NN}$ be a graded family of $\mm$-primary ideals. For $k \in \NN$, set
	$$\text{val}(I_k) = \{v(f) ~\big|~ f \in I_k \setminus \{0\}\}.$$
	The \emph{limiting} and the \emph{Newton-Okounkov} bodies of $\I$, \emph{with respect to} $v$, are defined to be
	$$\C(\I) = \bigcup_{k \in \NN} \left\{\dfrac{\a}{k} ~\big|~ \a \in \text{val}(I_k)\right\} \subseteq \RR^n \text{ and } \Delta(\I) = \overline{\C(\I)} \subseteq \RR^n.$$
\end{definition}

Now, let $K$ be the quotient field of $R$. Let $v: K \setminus \{0\} \rightarrow \ZZ^n$ be a good valuation of $R$. We say that $v$ \emph{respects the monomials} of $R$, if for every monomial $x^\a = x_1^{a_1} \cdots x_n^{a_n} \in R$, $v(x^\a) = \a$. Note that in this case, the value group $S = v(R \setminus \{0\}) \cup \{0\} = \ZZ_{\ge 0}^n$. Good valuations that respect the monomials in $R$ always exist as illustrated in the following example.

\begin{example}
Fix a monomial order $>$ on $R$, and consider the Gr\"obner valuation $v: K \setminus \{0\} \rightarrow \ZZ^n$ given by
$$v(f) = \text{the multidegree of the least term of } f \text{ with respect to } >,$$ for $f\in R$, and extend to $K$ by $v(f/g)=v(f)-v(g)$. Then, $v$ is a good valuation that respects the monomials of $R$.
\end{example}

\begin{theorem}
    \label{thm.ell*valuation}
    Let $\I$ be a graded family of $\mm-$primary homogeneous ideals, and let $v$ be a good valuation that respects the monomials in $R$. Let $\C(\I)$ and $\Delta(\I)$ be the limiting and Newton-Okounkov bodies of $\I$ defined by $v$. Suppose that $\C(\I)$ is a polyhedron. Then, we have
    $$\ell^*(\I) = \mdc(\Delta(\I)) + 1 = \dim R.$$
\end{theorem}

\begin{proof}
    Note that, for each $k \in \NN$, as $I_k$ is $\mm-$primary, the sets $\ZZ_{\ge 0}^n \setminus \text{val}(I_k)$ and $\ZZ_{\ge 0}^n \setminus \text{val}(\mm I_k)$ are finite. By \cite[Proposition 7.9]{KK2014}, we have
    $$\dim_{\kk}(R/I_k) = \#(\ZZ_{\ge 0}^n \setminus \text{val}(I_k)) ,\text{ and  } \dim_{\kk}(R/\mm I_k) = \#(\ZZ_{\ge 0}^n \setminus \text{val}(\mm I_k)),$$
    where $\#(P)$ denotes the number of integral points in the set $P\subseteq \RR^n$. Therefore,
    $$\mu(I_k) = \dim_{\kk}(I_k/\mm I_k) = \#(\ZZ_{\ge 0}^n \setminus \text{val}(\mm I_k)) - \#(\ZZ_{\ge 0}^n \setminus \text{val}(I_k)).$$

    For $k \in \NN$, define
    $$\Tilde{I}_k = \langle \{x^{v(f)} ~\big|~ f \text{ is a homogeneous element of }\, I_k\setminus \{0\} \} \rangle.$$
    It can be seen that $\Tilde{I}_k$ is a monomial ideal. Moreover, as the valuation $v$ respects the monomials of $R$, we have
    $$\text{val}(I_k) = \text{val}(\Tilde{I}_k) =\mathcal{L}(\Tilde{I}_k).$$
    We will show that $$\text{val}(\mm I_k) = \text{val}(\mm \Tilde{I}_k) =\mathcal{L}(\mm \Tilde{I}_k).$$
    Indeed, since $\mm \Tilde{I}_k$ is a monomial ideal, the second equality is clear. For the first equality, observe that any element in $\mm I_k$ can be written as $\sum x_i f_i$, where $f_i\in I_k$, and $v(\sum x_i f_i) = v(x_i f_i) = v(x_i) + v(f_i)$ for some $i$. The element $x_i x^{v(f_i)} \in \mm \Tilde{I_k}$ satisfies $v(x_i x^{v(f_i)}) = v(x_i) + v(f_i)$ since $v$ respects the monomials of $R$. Thus, it follows that
    $$\text{val}(\mm I_k) \subseteq \text{val}(\mm \Tilde{I}_k).$$
    Conversely, since $\Tilde{I_k}$ is a monomial ideal, any element in $\mm \Tilde{I_k}$ can be written as $\sum x_i x^{m_i}$ where $x^{m_i}$ is a monomial in $\Tilde{I_k}$. Hence, $v(\sum x_i x^{m_i}) = v(x_i) + m_i$ for some $i$. On the other hand, since $x^{m_i} \in \Tilde{I}_k$, there exists a homogeneous element $f \in I_k\setminus \{0\}$ such that $x^{v(f)}$ divides $x^{m_i}$. Therefore, the element $x_i x^{m_i - v(f)} f \in \mm I_k$ satisfies $v(x_i x^{m_i - v(f)} f) = v(x_i)+m_i$ and this establish the desired containment
    $$\text{val}(\mm \Tilde{I}_k) \subseteq \text{val}(\mm I_k).$$

    Now, for $k\in \NN$, we have
    \begin{align*}
        \mu(I_k) &= \#(\ZZ_{\ge 0}^n \setminus \text{val}(\mm I_k)) - \#(\ZZ_{\ge 0}^n \setminus \text{val}(I_k))\\
        & = \#(\ZZ_{\ge 0}^n \setminus \text{val}(\mm I_k)) - \#(\ZZ_{\ge 0}^n \setminus \text{val}(I_k)) = \mu(\Tilde{I}_k).
    \end{align*}
    Furthermore, as $\text{val}(I_k) = \text{val}(\Tilde{I}_k) = \mathcal{L}(\Tilde{I}_k)$ for $k \in \NN$, we get $\Delta(\I) = \Delta(\widetilde{\I})$ and $\C(\I) = \C(\widetilde{\I})$, where $\widetilde{\I} = \{ \widetilde{I_k}\}_{k\ge 0}$. Since $\widetilde{\I}$ is a graded family of monomial ideals and $\C(\widetilde{\I})$ is a polyhedron, by Theorem \ref{lem.NPpoly}, $\widetilde{\I}$ is Noetherian. Hence, it follows from Theorem \ref{thm.analyticSpread} that
    $$\ell(\widetilde{\I})=\ell^*(\widetilde{\I}) = \mdc(\Delta(\widetilde{\I})) + 1.$$
    This, together with the established fact that $\mu(I_k)=\mu(\Tilde{I}_k)$ for all $k \in \NN$, implies that
    $$\ell^*(\I)=\ell^*(\widetilde{\I}) = \mdc(\Delta(\widetilde{\I})) + 1 = \mdc(\Delta(\I)) + 1.$$
    The equality $\mdc(\Delta(\I)) + 1 = \dim R$ is clear by a similar argument as that of the proof of Proposition \ref{prop.boundell*}.
\end{proof}

\begin{remark}
    In Theorem \ref{thm.ell*valuation}, if, in addtion, $\I$ is a Noetherian graded family, then $$\ell(\I) = \ell^*(\I) = \mdc(\Delta(\I)) + 1 = \dim R.$$
\end{remark}


 \section{Newton and symbolic polyhedra of monomial ideals} \label{sec.NPSP}

This section focuses on the Newton and symbolic polyhedra of monomial ideals as particular cases of the Newton-Okounkov body. We shall use our results in Section \ref{sec.NO} to investigate the relationships between $\NP(I)$ and $\SP(I)$, for a monomial ideal $I$, and their algebraic implications.

We start with a simple application of the main result in Section \ref{sec.NO}, namely, Theorem \ref{thm.Noetherian}.

\begin{theorem}
	\label{thm.NPSP}
	Let $I \subseteq R$ be a monomial ideal. There exists an integer $c$ such that
	$$\NP(I^{(c)}) = c\SP(I).$$
\end{theorem}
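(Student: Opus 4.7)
The plan is to apply Theorem \ref{thm.Noetherian} directly to the graded family of symbolic powers, $\I = \{I^{(k)}\}_{k \in \NN}$. For this to work I need to recognize three things about this family: (a) it is indeed a graded family of monomial ideals, (b) its Newton--Okounkov body coincides with $\SP(I)$, and (c) its Rees algebra is Noetherian. Each of these is either standard or has already appeared in the excerpt.

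First I would verify (a): the containment $I^{(p)} \cdot I^{(q)} \subseteq I^{(p+q)}$ follows by localizing at each associated prime $\pp$ and using that $I^p R_\pp \cdot I^q R_\pp \subseteq I^{p+q} R_\pp$, so $\I$ is a graded family in the sense of Definition \ref{def.family}. Since each $I^{(k)}$ is an intersection of localizations of powers of a monomial ideal, it remains monomial. For (b), Remark \ref{rmk.NPSP} already records the equality $\SP(I) = \Delta(\I)$ for this family, obtained by combining the description of $\SP(I)$ as a union $\bigcup_k \tfrac{1}{k}\NP(I^{(k)})$ (from \cite[Corollary 3.12]{C+2021}) with the closedness of $\SP(I)$. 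For (c), I would simply invoke \cite[Theorem 3.2]{HHT2007}, which states that the symbolic Rees algebra $\R_s(I) = \R(\I)$ of a monomial ideal is Noetherian.

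With these three ingredients in place, Theorem \ref{thm.Noetherian} applies to $\I$: since condition (4) of that theorem holds, so does condition (1), namely there exists an integer $c$ such that
\[
\Delta(\I) = \frac{1}{c}\NP(I_c) = \frac{1}{c}\NP(I^{(c)}).
\]
Combining this with $\SP(I) = \Delta(\I)$ gives $\SP(I) = \tfrac{1}{c}\NP(I^{(c)})$, and multiplying both sides by $c$ yields $c\,\SP(I) = \NP(I^{(c)})$, as desired.

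There is no serious obstacle here; the theorem is essentially a formal consequence of the machinery already established. The only mild subtlety worth flagging in the write-up is that the Noetherian property of $\R_s(I)$ is what converts the \emph{asymptotic} equality $\SP(I) = \overline{\bigcup_k \tfrac{1}{k}\NP(I^{(k)})}$ into the single-step equality with a fixed index $c$; this is exactly the content of the $(4) \Rightarrow (1)$ direction of Theorem \ref{thm.Noetherian}. In a later result (Theorem \ref{thm.c}) the authors will identify $c$ explicitly for squarefree $I$ from the denominators of the vertices of $\SP(I)$, but for the present existence statement no such explicit description is needed.
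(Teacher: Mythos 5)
Your proposal is correct and follows exactly the paper's own argument: identify $\SP(I)$ with $\Delta(\I)$ for the symbolic-power family, invoke the Noetherianity of $\R_s(I)$ from \cite[Theorem 3.2]{HHT2007}, and apply the $(4)\Rightarrow(1)$ direction of Theorem \ref{thm.Noetherian} to produce the integer $c$. The extra verifications you include (that the symbolic powers form a graded family of monomial ideals) are sound but already implicit in the paper's setup.
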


\begin{proof} Let $\I = \{I^{(k)}\}_{k \in \NN}$ be the family of symbolic powers of $I$. It is known that $\R(\I)$ is Noetherian, by \cite[Theorem 3.2]{HHT2007}. Thus, by Theorem \ref{thm.Noetherian}, there exists an integer $c$ such that
	$$\SP(I) = \Delta(\I) = \dfrac{1}{c}\NP(I^{(c)}).$$
	That is,
	$\NP(I^{(c)}) = c\SP(I),$
	and the statement is proved.
\end{proof}

\begin{example}
	\label{ex.thm5.1}
	Let $I = (xy,yz,zx) \subseteq \kk[x,y,z]$ be as in Example \ref{ex.symbolicAS}. It can be seen that
	$$\NP(I) = \conv \left\{(1,1,0), (1,0,1), (0,1,1)\right\} + \RR^3_{\ge 0}.$$
	It is obvious from the description of $\SP(I)$ in Example \ref{ex.symbolicAS} that $\NP(I) \not= \SP(I)$.
	
	On the other hand, we have $I^{(2)} = (x^2y^2, y^2z^2, z^2x^2, xyz)$ and so
	$$\NP(I^{(2)}) = \conv\left(\{(2,2,0), (2,0,2), (0,2,2), (1,1,1)\}\right) + \RR^3_{\ge 0} = 2\SP(I).$$
\end{example}

For a squarefree monomial ideal $I$ (or, more generally, monomial ideals of linear-power type), the equality between $\NP(I)$ and $\SP(I)$ (or equivalent, the condition that $c = 1$ in Theorem \ref{thm.NPSP}) gives a nice characterization for the equality between its symbolic powers and the integral closures of its ordinary powers.

\begin{lemma}
	\label{lem.int=symb}
	Let $I \subseteq R$ be a monomial ideal of linear-power type. Then, $\NP(I) = \SP(I)$ if and only if $\overline{I^k} = I^{(k)}$ for all $k \in \NN$.
\end{lemma}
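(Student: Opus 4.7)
The plan is to translate the equality of ideals into an equality of lattice points of polyhedra via the membership criteria of Lemma~\ref{lem.membership} (together with Remark~\ref{rmk.SP}, which extends the symbolic criterion from the squarefree to the linear-power type case).

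For the forward direction, assume $\NP(I) = \SP(I)$. For any $k \in \NN$ and any $\a \in \ZZ^n_{\ge 0}$, Lemma~\ref{lem.membership}(1) gives $x^\a \in \overline{I^k}$ iff $\a/k \in \NP(I)$, while Lemma~\ref{lem.membership}(2) together with Remark~\ref{rmk.SP} gives $x^\a \in I^{(k)}$ iff $\a/k \in \SP(I)$. Since the two polyhedra coincide, the two monomial ideals $\overline{I^k}$ and $I^{(k)}$ contain the same monomials, hence are equal.

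For the converse, assume $\overline{I^k} = I^{(k)}$ for every $k \in \NN$. The containment $\NP(I) \subseteq \SP(I)$ is immediate from the definitions, so the task is the reverse inclusion. First I would verify the rational version: for any $v \in \SP(I) \cap \QQ^n_{\ge 0}$, choose $k \in \NN$ such that $\a := kv \in \ZZ^n_{\ge 0}$. Then $\a/k \in \SP(I)$, so by Remark~\ref{rmk.SP} and Lemma~\ref{lem.membership}(2) we have $x^\a \in I^{(k)} = \overline{I^k}$. Lemma~\ref{lem.membership}(1) then gives $v = \a/k \in \NP(I)$.

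To pass from rational points to arbitrary points of $\SP(I)$, I would invoke the structure of $\SP(I)$ as a rational polyhedron whose recession cone is $\RR^n_{\ge 0}$. Write $\SP(I) = \conv\{v_1,\dots,v_s\} + \RR^n_{\ge 0}$, with each vertex $v_i$ rational. By the previous paragraph every $v_i$ lies in $\NP(I)$. Since $\NP(I)$ is convex and has recession cone containing $\RR^n_{\ge 0}$, it contains $\conv\{v_1,\dots,v_s\} + \RR^n_{\ge 0} = \SP(I)$, completing the proof. The only mild subtlety is the reliance on Remark~\ref{rmk.SP} to apply Lemma~\ref{lem.membership}(2) to linear-power type ideals rather than merely squarefree ones; apart from that the argument is essentially a dictionary between the two membership criteria, so I do not expect a serious obstacle.
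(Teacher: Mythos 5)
Your proposal is correct and follows essentially the same route as the paper: both directions are handled by translating ideal membership into polyhedral membership via Lemma~\ref{lem.membership} and Remark~\ref{rmk.SP}, and the converse reduces to rational points of $\SP(I)$ exactly as in the paper (your explicit vertex-plus-recession-cone argument just spells out what the paper compresses into the observation that both sets are rational polyhedra).
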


\begin{proof} Suppose first that $\NP(I) = \SP(I)$. Since $\overline{I^k} \subseteq I^{(k)}$ is already known for a monomial ideal of linear-power type, we shall prove the reverse inclusion.
	Consider any monomial $x^\a \in I^{(k)}$. Then, by Lemma \ref{lem.membership} and Remark \ref{rmk.SP}, we  have $\a \in k\SP(I) = k\NP(I)$, and so $x^\a \in \overline{I^k}$. Therefore, $I^{(k)} \subseteq \overline{I^k}$.
	
	Conversely, suppose that $\overline{I^k} = I^{(k)}$ for all $k \in \NN$. Since $\NP(I) \subseteq \SP(I)$ and they are both rational polyhedra, it suffices to show that $\SP(I) \cap \QQ^n = \NP(I) \cap \QQ^n$. Indeed, consider any $\a \in \SP(I) \cap \QQ^n$. Then, there exists an integer $t$ such that $\b = t\a \in t\SP(I) \cap \ZZ^n$. Thus, by Lemma \ref{lem.membership} and Remark \ref{rmk.SP}, we get that $x^{\b} \in I^{(t)} = \overline{I^t}$, and so $\b \in \NP(I^t) = t\NP(I)$. Particularly, it follows that $\a \in \NP(I)$, and the proof completes.
\end{proof}

Observe that, for a squarefree monomial ideal $I$, we have $I^k \subseteq \overline{I^k} \subseteq I^{(k)}$ for all $k \in \NN$. Thus, Lemma \ref{lem.int=symb} quickly recovers and strengthens \cite[Theorem 4.1]{B+2021}. Example \ref{ex.SPmonomial} shows that the conclusion of Lemma \ref{lem.int=symb} does not necessarily hold if $I$ not of linear-power type.


The constant $c$ in Theorem \ref{thm.NPSP} is given implicitly by the proof of Theorem \ref{thm.Noetherian} (and also of Theorem \ref{lem.NPpoly}). When $I$ is a squarefree monomial ideal (or, more generally, of linear-power type), we can give an explicit understanding of $c$ in terms of the vertices of the symbolic polyhedron of $I$.

\begin{theorem}
	\label{thm.c}
	Let $I \subseteq R$ be a monomial ideal of linear-power type and suppose that $\{v_1, \dots, v_r\}$ are the vertices of $\SP(I)$. Let $c$ be the least common multiple of the denominators appearing in the coordinates of $v_1, \dots, v_r$.
	\begin{enumerate}
		\item $\NP(I^{(c)}) = c\SP(I)$; equivalently, we have $I^{(kc)} = \overline{\left(I^{(c)}\right)^k}$ for all $k \in \NN$.
		\item $c$ is the smallest possible integer for which the conclusion of Theorem \ref{thm.NPSP} holds; more precisely, for any $d \in \NN$ satisfying $\NP(I^{(d)}) = d\SP(I)$, we have $c$ divides $d$.
	\end{enumerate}
\end{theorem}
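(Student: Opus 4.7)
The plan is to prove (1) by directly verifying both inclusions $c\SP(I) \subseteq \NP(I^{(c)})$ and $\NP(I^{(c)}) \subseteq c\SP(I)$, using Lemma \ref{lem.membership} as the main tool, and then deduce the equivalent formulation $I^{(kc)} = \overline{(I^{(c)})^k}$ by comparing lattice points. Since $I$ is of linear-power type, formula (\ref{eq.SP}) shows that $\SP(I)$ is a rational polyhedron whose recession cone is $\RR^n_{\geq 0}$, so $\SP(I) = \conv\{v_1, \dots, v_r\} + \RR^n_{\geq 0}$.

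For the first inclusion, the choice of $c$ ensures that each $cv_i$ is a lattice point. Applying Lemma \ref{lem.membership}.(2) (valid here by Remark \ref{rmk.SP}) to $v_i \in \SP(I)$ yields $x^{cv_i} \in I^{(c)}$, hence $cv_i \in \NP(I^{(c)})$ for every $i$; convexity of $\NP(I^{(c)})$ and its absorption of $\RR^n_{\geq 0}$ then give $c\SP(I) \subseteq \NP(I^{(c)})$. Conversely, if $x^\a \in I^{(c)}$ then the same membership criterion forces $\a/c \in \SP(I)$, i.e., $\a \in c\SP(I)$; convexity of $c\SP(I)$ yields $\NP(I^{(c)}) \subseteq c\SP(I)$. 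For the equivalent reformulation, I would observe that a monomial ideal is determined by its lattice of exponents: by Lemma \ref{lem.membership}.(2) the exponents of monomials in $I^{(kc)}$ are exactly $kc\SP(I) \cap \NN^n$, while by Lemma \ref{lem.membership}.(1) the exponents of monomials in $\overline{(I^{(c)})^k}$ are $k\NP(I^{(c)}) \cap \NN^n$, and the equality from (1) makes these coincide.

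For (2), the plan is to exploit the fact that every vertex of the Newton polyhedron of a monomial ideal is an integer point; indeed, since $\NP(J)$ has recession cone $\RR^n_{\geq 0}$, any vertex of $\NP(J)$ is already a vertex of the convex hull of the lattice exponents of $J$ and therefore lies in $\NN^n$. If $\NP(I^{(d)}) = d\SP(I)$, then the vertices of the right hand side, namely $dv_1, \dots, dv_r$, must all lie in $\ZZ^n$; this forces every denominator appearing in a coordinate of any $v_i$ to divide $d$, and the lcm $c$ of these denominators therefore divides $d$. The main subtlety to be careful about is justifying the integrality of vertices of a Newton polyhedron and confirming that the vertices of $d\SP(I)$ are exactly $\{dv_1, \dots, dv_r\}$ (a straightforward consequence of scaling by a positive constant); both points are routine but worth spelling out cleanly.
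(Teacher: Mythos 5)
Your proposal is correct and follows essentially the same route as the paper: both inclusions in (1) come from the membership criteria of Lemma \ref{lem.membership} (via Remark \ref{rmk.SP}) applied to the integral points $cv_i$ and to the exponents of $I^{(c)}$, and (2) is exactly the paper's argument that the vertices $dv_i$ of $d\SP(I) = \NP(I^{(d)})$ must be integral because Newton polyhedra of monomial ideals are integral polyhedra. The only cosmetic difference is that you obtain the reformulation $I^{(kc)} = \overline{\left(I^{(c)}\right)^k}$ by directly comparing lattice points of $kc\SP(I)$ and $k\NP(I^{(c)})$, whereas the paper routes through Lemma \ref{lem.int=symb} and the identity $I^{(kc)} = \left(I^{(c)}\right)^{(k)}$; both are valid.
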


\begin{proof} By definition, $\SP(I) = \conv\left( v_1, \dots, v_s\right) + \RR^n_{\ge 0}$. Thus,
	$$c\SP(I) = \conv\left(cv_1, \dots, cv_s\right) + \RR^n_{\ge 0}$$
	is an integral polyhedron. Particularly, it follows that $cv_i \in c\SP(I) \cap \ZZ^n_{\ge 0} = \mathcal{L}(I^{(c)})$. Thus, $cv_i \in \NP(I^{(c)})$ for all $i$. Hence, $c\SP(I) \subseteq \NP(I^{(c)})$. The reverse containment $c\SP(I) = \SP(I^{(c)}) \supseteq \NP(I^{(c)})$ is trivially true, and the first part of (1) is proved.
	
	To establish the second part of (1), that is, the equivalence between the equality $\NP(I^{(c)}) = c\SP(I)$ and the condition that $I^{(kc)} = \overline{\left(I^{(c)}\right)^k}$ for all $k \in \NN$, observe that, since $I$ is of linear-power type, we have $I^{(kc)} = \left(I^{(c)}\right)^{(k)}$ by \cite[Theorem 3.7]{CEHH2017}. The assertion now follows from Lemma \ref{lem.int=symb}, Lemma \ref{lem.membership} and the fact that $\SP(I^{(c)}) = c\SP(I)$.
	
	To prove (2), suppose by contrary that $c$ does not divide $d$. That is, there exists a vertex $v_i$ of $\SP(I)$ such that $dv_i \not\in \ZZ^n$. Particularly, $dv_i$ is a non-integral vertex of $d\SP(I) = \NP(I^{(d)})$. This is a contradiction since $\NP(I^{(d)})$ is an integral polyhedron. The theorem is proved.
\end{proof}

\begin{example}
	\label{ex.cFromSP1}
	Let $I = (x,y)^2 \cap (y,z)^3 \cap (z,x)^4 \subseteq \kk[x,y,z]$. Then $I$ is a monomial ideal of linear-power type. By the description of $\SP(I)$ given in (\ref{eq.SP}), the defining half-spaces of $\SP(I)$ are $a_1 \ge 0, a_2 \ge 0, a_3 \ge 0, a_1+a_2 \ge 2, a_2+a_3 \ge 3$ and $a_3+a_1 \ge 4$. Thus, the vertices of $\SP(I)$ are integral vertices (which are $(4,3,0)$, $(2,0,3)$, $(0,2,4)$) together with the vertex $\left(\frac{3}{2}, \frac{1}{2}, \frac{5}{2}\right)$. Therefore, by Theorem \ref{thm.c}, $c = 2$. This implies particularly that
	$$\NP(I^{(2)}) = 2\SP(I).$$
\end{example}

\begin{example}
	\label{ex.cFromSP2}
	Let $I = (ab,bc,cd,de,ea,fa,fb,fc,fd,fe) \subseteq \kk[a,\dots,f]$ be the edge ideal of a cone over a 5-cycle. Then,
	$$I = (a,b,d,f) \cap (a,c,d,f) \cap (a,c,e,f) \cap (b,c,e,f) \cap (b,d,e,f) \cap (a,b,c,d,e).$$
	A direct computation using (\ref{eq.SP}) gives 17 vertices of $\SP(I)$ (written as the columns of the following matrix):
	$$\left( \begin{array}{ccccccccccccccccc}
	1/5 & 1/3 & 1/2 & 0 & 0 & 0 & 1/2 & 1 & 0 & 0 & 0 & 1 & 1 & 0 & 0 & 0 & 0 \\
1/5 & 1/3 & 1/2 & 1/2& 0 & 0 & 0 & 1 & 1 & 0 & 0 & 0 & 0 & 1 & 0 & 0 & 0 \\
1/5 & 1/3 & 0 & 1/2 & 1/2 & 0 & 0 & 0 & 1 & 1 & 0 & 0 & 0 & 0 & 1 & 0 & 0 \\
1/5 & 1/3 & 0 & 0 & 1/2 & 1/2 & 0 & 0 & 0 & 1 & 1 & 0& 0 & 0 & 0 & 1 & 0 \\
1/5 & 1/3 & 0 &0 & 0 & 1/2 & 1/2 & 0 & 0 & 0 & 1 & 1 & 0 & 0 & 0 & 0 & 1 \\
2/5 & 0 & 1/2 & 1/2 & 1/2 & 1/2 & 1/2 & 0 & 0 & 0 & 0 & 0 & 1 & 1 & 1 & 1 & 1 \end{array}\right).$$
	By Theorem \ref{thm.c}, we get $c = 30$. Thus, $\NP(I^{(30)}) = 30\SP(I)$ and $c=30$ is the least possible integer for the equality in Theorem \ref{thm.NPSP} to hold.
\end{example}

 \section{Generation type and Veronese degree of monomial ideals} \label{sec.Veronese}

 In this section, we shall provide bounds for the symbolic generation type and the Veronese degree of a squarefree monomial ideal (or, more generally, monomial ideals of linear-power type) in terms of its symbolic analytic spread and the constant $c$ derived from its symbolic polyhedron as in Theorem \ref{thm.c}.

 Our first main result of the section give bounds for the symbolic Veronese degree.

 \begin{theorem}
 	\label{thm.svd}
 	Let $I$ be a monomial ideal of linear-power type and let $c$ be the constant obtained from $\SP(I)$ as in Theorem \ref{thm.c}.
 	\begin{enumerate}
 		\item If $d$ is any integer such that $I^{(dk)} = \left(I^{(d)}\right)^k$ for all $k \in \NN$ then $d$ is a multiple of $c$. Particularly, $\svd(I)$ is a multiple of $c$, and so $\svd(I) \ge c$.
 		\item $\svd(I) \le (\ell_s(I)-1)c = \mdc(\SP(I))c$.
 	\end{enumerate}
 \end{theorem}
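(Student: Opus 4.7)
For part (1), the plan is to reduce the claim to the minimality statement of Theorem \ref{thm.c}(2). Suppose $I^{(dk)}=(I^{(d)})^k$ for every $k \in \NN$. Since $I^{(d)}$ inherits the linear-power-type structure from $I$, \cite[Theorem 3.7]{CEHH2017} gives $I^{(dk)} = (I^{(d)})^{(k)}$, so the hypothesis becomes $(I^{(d)})^k = (I^{(d)})^{(k)}$ for all $k$. Sandwiching $\overline{(I^{(d)})^k}$ between these two ideals (using the inclusion $\overline{J^k}\subseteq J^{(k)}$ for linear-power-type $J$ recorded in Section \ref{sec.prel}) forces $\overline{(I^{(d)})^k} = (I^{(d)})^{(k)}$ for every $k$, and Lemma \ref{lem.int=symb} applied to $I^{(d)}$ then yields $\NP(I^{(d)}) = \SP(I^{(d)}) = d\,\SP(I)$. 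Theorem \ref{thm.c}(2) forces $c \mid d$, proving (1).

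For part (2), the key step is to apply the reduction-number bound of \cite[Theorem 5.1]{Sin2007} to the single ordinary monomial ideal $J := I^{(c)}$ and then translate everything back into symbolic powers of $I$. Two preparatory facts are needed. First, Theorem \ref{thm.c}(1) gives $\NP(J) = c\,\SP(I)$, so Corollary \ref{cor.ellsG} together with the scale-invariance of $\mdc$ yields $\ell(J) = \mdc(\NP(J))+1 = \mdc(\SP(I))+1 = \ell_s(I)$. Second, Theorem \ref{thm.c}(1) combined with Lemma \ref{lem.int=symb} (applied to $J$) gives the key translation $\overline{J^k} = I^{(ck)}$ for every $k$. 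Invoking the reduction-number bound of $J$ from \cite{Sin2007} (which supplies $\overline{J^{k+1}} = J\,\overline{J^k}$ for $k \ge \ell(J)-1$) then produces the crucial recursion
\[
I^{(c(k+1))} = I^{(c)} \cdot I^{(ck)} \qquad \text{for all } k \ge \ell_s(I)-1.
\]

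Iterating this recursion gives $I^{(cm)} = (I^{(c)})^{m-(\ell_s(I)-1)} \cdot I^{(d)}$ for every $m \ge \ell_s(I)-1$, where $d := (\ell_s(I)-1)c$. Setting $m = (\ell_s(I)-1)k$ and using the containment $(I^{(c)})^{\ell_s(I)-1} \subseteq I^{(c(\ell_s(I)-1))} = I^{(d)}$, the right-hand side lands inside $(I^{(d)})^{k-1} \cdot I^{(d)} = (I^{(d)})^k$. Since $(I^{(d)})^k \subseteq I^{(dk)}$ is automatic, I conclude $I^{(dk)} = (I^{(d)})^k$ for all $k$, giving $\svd(I) \le d = (\ell_s(I)-1)c$; the final equality with $\mdc(\SP(I))c$ is Corollary \ref{cor.ellsG}.

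The main obstacle I anticipate is extracting the clean recursion $\overline{J^{k+1}} = J\,\overline{J^k}$ (for $k \ge \ell(J)-1$) from \cite[Theorem 5.1]{Sin2007}: Singla's result is phrased in terms of reduction numbers of minimal monomial reductions $K \subseteq J$, and some care is needed to transport the bound to the integral-closure filtration and to see that one may take the reduction to be $J$ itself (via the squeeze $K\overline{J^k} \subseteq J\overline{J^k} \subseteq \overline{J^{k+1}} = K\overline{J^k}$). Once this recursion is in hand, the rest of the argument is routine bookkeeping with symbolic-power exponents.
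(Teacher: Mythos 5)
Your proposal is correct and follows essentially the same route as the paper: part (1) reduces to $\NP(I^{(d)})=\SP(I^{(d)})=d\,\SP(I)$ via Lemma \ref{lem.int=symb} and then invokes the minimality in Theorem \ref{thm.c}(2), and part (2) uses $\ell(I^{(c)})=\ell_s(I)$ together with the reduction-number bound of \cite[Theorem 5.1]{Sin2007} to get $\overline{J^{m}}=J\cdot\overline{J^{m-1}}$ for $m\ge \ell(J)$ and then translates back through $\overline{J^{k}}=I^{(ck)}$. The only cosmetic difference is bookkeeping: you iterate the recursion and substitute $m=(\ell_s(I)-1)k$, while the paper telescopes $\overline{J^{\ell k}}=J^{\ell(k-1)}\overline{J^{\ell}}$ directly and sandwiches; the squeeze you flag for passing from a minimal reduction to $J$ itself is exactly the step the paper performs.
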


\begin{proof} (1) Observe that powers of an ideal generated by a collection of variables are integrally closed. Moreover, the intersection of integrally closed ideals is also integrally closed. Thus, we have $\overline{I^{(dk)}} = I^{(dk)}$. It follows that $I^{(dk)} = \overline{\left(I^{(d)}\right)^k}$ for all $k \in \NN$. Since $I$ is of linear-power type, we further have $I^{(dk)} = \left(I^{(d)}\right)^{(k)}$, again by \cite[Theorem 3.7]{CEHH2017}. Therefore, by Lemma \ref{lem.int=symb}, we get that $\NP(I^{(d)}) = \SP(I^{(d)})$. On the other hand, Lemma \ref{lem.membership} implies that $\SP(I^{(d)}) = d\SP(I)$. Hence, $\NP(I^{(d)}) = d\SP(I)$. The first assertion now follows from Theorem \ref{thm.c}. The second assertion follows from the first one, by the definition of $\svd(I)$.
	
	(2) For simplicity, set $\ell = \ell_s(I) - 1$. By Corollary \ref{cor.ellsG}, Theorem \ref{thm.c} and \cite[Theorem 2.3]{BA2003}, we have $\ell = \ell(I^{(c)})-1$. Since $J = I^{(c)}$ is a monomial ideal, by \cite[Theorem 5.1]{Sin2007}, we get, for any $k \in \NN$,
	$$\left(\overline{J^\ell}\right)^k \subseteq \overline{J^{\ell k}} = J \cdot \overline{J^{\ell k - 1}} = \cdots = J^{\ell(k-1)} \cdot \overline{J^\ell} \subseteq \left(J^\ell\right)^{k-1} \cdot \overline{J^\ell} \subseteq  \left(\overline{J^\ell}\right)^k.$$
	Thus, we must have
	$\left(\overline{J^\ell}\right)^k = \overline{J^{\ell k}}$ for all $k \in \NN$. 	
	Moreover, by Theorem \ref{thm.c}, we have $I^{(c\ell k)} = \overline{\left(I^{(c)}\right)^{\ell k}} = \overline{J^{\ell k}}$. This, together with Theorem \ref{thm.c} again, implies that
	$$I^{(c\ell k)} = \left(\overline{J^\ell}\right)^k = \left[\overline{(I^{(c)})^{\ell}}\right]^k = \left(I^{(c \ell)}\right)^k \ \text{ for all } \ k \in \NN.$$
	It then follows from the definition of $\svd(I)$ that $\svd(I) \le c\ell$, and the (2) is proved.
\end{proof}

The following example illustrates that the bounds in Theorem \ref{thm.svd} are sharp.

\begin{example}
	\label{ex.svdsharp}
	Let $I = (x,y)^2 \cap (y,z)^3 \cap (z,x)^4$ be the monomial ideal as in Example \ref{ex.cFromSP1}. As computed in Example \ref{ex.cFromSP1}, we have $c = 2$. All the vertices of $\SP(I)$ were also given in Example \ref{ex.cFromSP1}. Thus, we get $\ell_s(I) = 2$. Hence, Theorem \ref{thm.svd} forces $\svd(I) = 2$ and we have the equality $c = \svd(I) = (\ell_s(I)-1)c.$
\end{example}

Clearly, monomial ideals that have standard graded symbolic Rees algebras satisfy $c=\svd(I)=1$.



In \cite[Question 3.10]{GS2021}, it is asked if for all monomial ideals $I$, $\svd(I)$ is at most the least common multiple of the algebra generating degrees of the symbolic Rees algebra $\R_s(I)$. We provide a weaker statement for squarefree monomial ideals.

\begin{theorem}
	\label{thm.boundC}
	Let $I$ be a monomial ideal of linear-power type and let $c$ be obtained as in Theorem \ref{thm.c}. Then, $c$ divides the least common multiple of the algebra generating degrees of the symbolic Rees algebra $\R_s(I)$.
\end{theorem}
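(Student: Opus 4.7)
The plan is to unpack $c$ via the combinatorial description from Theorem \ref{thm.c}: $c = \lcm(D_1,\dots,D_r)$, where $v_1,\dots,v_r$ are the vertices of $\SP(I)$ and $D_i$ is the lcm of the denominators of the coordinates of $v_i$. Writing $L$ for the lcm of the algebra generating degrees of $\R_s(I)$, the goal is to show $D_i \mid L$ for each $i$, which immediately yields $c \mid L$.

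The key observation will be that the monomial $x^{D_i v_i}$ produces a natural candidate for a minimal algebra generator of $\R_s(I)$ at degree $D_i$. Since $D_i v_i \in \NN^n \cap D_i \SP(I)$, Lemma \ref{lem.membership}(2) (applicable to linear-power type ideals by Remark \ref{rmk.SP}) gives $x^{D_i v_i} \in I^{(D_i)}$, so $x^{D_i v_i} t^{D_i}$ is an element of $\R_s(I)$ in degree $D_i$. The central claim will be that $x^{D_i v_i} \notin \sum_{p+q=D_i,\, p,q\geq 1} I^{(p)} I^{(q)}$, meaning $x^{D_i v_i} t^{D_i}$ is indecomposable in $\R_s(I)$ over $R$; once this is established, $x^{D_i v_i} t^{D_i}$ must appear in any minimal algebra generating set of $\R_s(I)$, so $D_i$ is among the algebra generating degrees and $D_i \mid L$.

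To prove indecomposability, I will argue by contradiction. A decomposition $x^{D_i v_i} \in I^{(p)} I^{(q)}$ (with $p+q = D_i$, $p,q \geq 1$) produces monomials $x^{\beta_1} \in I^{(p)}$ and $x^{\beta_2} \in I^{(q)}$ with $\beta_1 + \beta_2 \leq D_i v_i$ componentwise; by Lemma \ref{lem.membership}, $\beta_1/p,\, \beta_2/q \in \SP(I)$. Setting $w := (\beta_1+\beta_2)/D_i = (p/D_i)(\beta_1/p) + (q/D_i)(\beta_2/q)$ gives an element of $\SP(I)$ with $w \leq v_i$. The main obstacle, and the place where the vertex hypothesis is truly used, is to force $w = v_i$; I will handle this by observing that $w' := 2v_i - w$ lies in $v_i + \RR^n_{\geq 0} \subseteq \SP(I)$, so that $v_i = \tfrac{1}{2}(w + w')$ expresses the extreme point $v_i$ as a convex combination of two points of $\SP(I)$, forcing $w = w' = v_i$. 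Once $w = v_i$, extremality of $v_i$ collapses the convex combination $(p/D_i)(\beta_1/p) + (q/D_i)(\beta_2/q) = v_i$ to $\beta_1/p = \beta_2/q = v_i$, so $\beta_1 = p v_i$. The requirement $\beta_1 \in \NN^n$ then forces $D_i \mid p$ by the very definition of $D_i$, contradicting $1 \leq p < D_i$ and completing the argument.
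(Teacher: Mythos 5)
Your proof is correct, and while it rests on the same underlying geometric fact as the paper's --- that $v_i$ being a vertex of $\SP(I)$ forces $d_iv_i$ (your $D_iv_i$) to be indecomposable in degree $d_i$, so that each $d_i$ is itself a generating degree --- your execution of that key step is genuinely different and more self-contained. The paper routes through the Simis cone $\mathrm{SC}(I)$ of \cite{EVY2016}: since $(v_i,1)$ spans an extremal ray, $(d_iv_i,d_i)$ lies in the integral Hilbert basis of $\mathrm{SC}(I)$, and \cite[Corollary 3.2]{MRV2011} is then invoked to identify Hilbert basis elements with minimal algebra generators of $\R_s(I)$. You instead argue directly that $x^{D_iv_i}\notin I^{(p)}I^{(q)}$ for $p+q=D_i$, $p,q\ge 1$, which is exactly the statement that $D_i$ is a generating degree. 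All your steps check out: Lemma \ref{lem.membership}(2) applies via Remark \ref{rmk.SP}, $v_i+\RR^n_{\ge 0}\subseteq\SP(I)$ since $\SP(I)=\conv(v_1,\dots,v_r)+\RR^n_{\ge 0}$, extremality collapses both convex combinations, and $pv_i\in\ZZ^n$ forces $D_i\mid p$ because the coordinates of $v_i$ are in lowest terms. Your reflection trick $w'=2v_i-w$ is a genuinely useful addition: ideal membership only gives $\beta_1+\beta_2\le D_iv_i$ rather than equality, and this is precisely the gap between ``sum of integral cone points'' and ``product of ideals'' that the paper's appeal to the Hilbert basis machinery silently absorbs. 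What you lose is the reusable observation that the points $(d_iv_i,d_i)$ sit in the Hilbert basis of $\mathrm{SC}(I)$, which the paper exploits again later (e.g.\ in Theorem \ref{thm.GT}); what you gain is independence from the external citations.
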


\begin{proof} Let $\{v_1, \dots, v_r\}$ be the vertices of $\SP(I)$. For $i = 1, \dots, r,$ let $d_i$ be the least common multiple of the denominators of the coordinates of $v_i$. By definition, $c = \lcm (d_1, \dots, d_r)$. It suffices to show that $d_i$ is an algebra generating degree of the symbolic Rees algebra $\R_s(I)$, for all $i = 1, \dots, r$.
	
	Let $\text{SC}(I)$ be the \emph{Simis cone} of $I$ as constructed in \cite[(4)]{EVY2016}. It can be seen from the definition that $\text{SC}(I)$ is the cone over $\SP(I)$ in $\RR^{n+1}$, where $\SP(I)$ is viewed as lying in the hyperplane $x_{n+1} = 1$ (that is isomorphic to $\RR^n$). Thus, $\{(v_1,1), \dots, (v_r,1)\}$ are the extremal rays of $\text{SC}(I)$. This particularly implies that $(d_iv_i, d_i)$ cannot be written as the sum of two integral points in $\text{SC}(I)$, for all $i = 1, \dots, r$. In other words, $(d_iv_i, d_i)$ is in the integral Hilbert basis of $\text{SC}(I)$, for all $i = 1, \dots, r$. It now follows from \cite[Corollary 3.2]{MRV2011} that $d_i$ is an algebra generating degree of $\R_s(I)$ for all $i = 1, \dots, r$. The theorem is proved.
\end{proof}

\begin{example}
	\label{ex.oddcycle}
	Let $I = I(C_{2n+1}) \subseteq \kk[x_1, \dots, x_{2n+1}]$ be the edge ideal of an odd cycle of length $2n+1$. It follows from \cite[Theorem 3.4]{GHOS2020} that $\R_s(I) = R[It, (x_1 \dots x_{2n+1}) t^{n+1}]$. Thus, $\svd(I) = n+1$. Observe further that the vertices of $\SP(I)$ are integral vertices (of $\NP(I)$) together with $(\frac{1}{n+1}, \dots, \frac{1}{n+1})$. Therefore, $c = n+1$. In this example, we have $\svd(I) = c$ is equal to the least common multiple of the minimal generating degrees of the symbolic Rees algebra of $I$.
\end{example}

We shall give another example illustrating that the bound for $c$ in Theorem \ref{thm.boundC} is sharp. To do so, we will need a lemma.

\begin{lemma}
	\label{lem.uniqueSol}
	Let $b < n$ be positive integers. Consider the following system of linear equations: $x_{i_1} + \dots + x_{i_b} = 1$, for all choices of $1 \le i_1 < \dots < i_b \le n$. In $\RR^n_{\ge 0}$, this system has a unique solution given by $x_1 = \dots = x_n = \dfrac{1}{b}.$
\end{lemma}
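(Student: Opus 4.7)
The plan is to reduce the problem to showing that all coordinates are equal, and then to read off the common value from any single equation. First, I would check that $x_1 = \cdots = x_n = 1/b$ indeed satisfies every equation of the system, so existence is trivial.

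For uniqueness, I would pick any two distinct indices $i, j \in \{1, \dots, n\}$ and exploit the hypothesis $b < n$ to find a common ``witness'' subset. Concretely, since $b - 1 \le n - 2$, there exists a subset $S \subseteq \{1, \dots, n\} \setminus \{i, j\}$ with $|S| = b - 1$. The two equations
\[
x_i + \sum_{k \in S} x_k = 1 \quad \text{and} \quad x_j + \sum_{k \in S} x_k = 1
\]
both appear in the system, and subtracting them yields $x_i = x_j$. Running this argument over all pairs $(i, j)$ forces $x_1 = \cdots = x_n$; call the common value $\lambda$. Substituting into any single equation $x_{i_1} + \cdots + x_{i_b} = 1$ gives $b\lambda = 1$, hence $\lambda = 1/b$.

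The argument works over $\RR^n$ and does not actually use the nonnegativity constraint, though the conclusion $1/b > 0$ automatically lies in $\RR^n_{\ge 0}$. There is no real obstacle here; the only thing to watch is the existence of the witness subset $S$, which is exactly where the hypothesis $b < n$ enters (if $b = n$, the system consists of the single equation $x_1 + \cdots + x_n = 1$, which obviously has many nonnegative solutions, so the hypothesis is sharp).
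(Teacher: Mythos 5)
Your proof is correct, and it takes a genuinely different (and arguably cleaner) route than the paper's. You subtract two equations whose index sets share a common $(b-1)$-element subset $S$ disjoint from $\{i,j\}$ — which exists precisely because $b-1 \le n-2$, i.e.\ $b < n$ — to conclude $x_i = x_j$ directly; this is pure linear algebra and, as you observe, proves uniqueness over all of $\RR^n$, not merely $\RR^n_{\ge 0}$. The paper instead argues by contradiction: it partitions the coordinates of a putative solution into those below $1/b$ and those at least $1/b$, shows the first group has at most $b-1$ elements (else some $b$-fold sum would be less than $1$), and then swaps one small coordinate $t_1$ for one large coordinate $t_{b+1}$ to get $1 = t_1 + \dots + t_b < t_2 + \dots + t_{b+1} = 1$. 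Both arguments use the hypothesis $b < n$ at the same essential moment (you to find the common witness set $S$, the paper to guarantee the index $b+1 \le n$ exists), and neither actually needs nonnegativity. Your version is shorter and isolates the real mechanism — two overlapping equations force equality of the symmetric difference — so there is nothing to fix.
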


\begin{proof} It is obvious that $x_1 = \dots = x_n = \frac{1}{b}$ is a solution to the given system. We shall prove that it is the unique solution in $\RR^n_{\ge 0}$. Consider an arbitrary solution $(t_1, \dots, t_n) \in \RR^n_{\ge 0}$ to the given system. Without loss of generality, we may assume that $t_1, \dots, t_s < \frac{1}{b}$, while $t_{s+1}, \dots, t_n \ge \frac{1}{b}$, for some $s < n$. If $s = 0$ then the assertion is proved. Suppose that $s \ge 1$.
	
	It is easy to see that $s \le b-1$; otherwise, $t_1 + \dots + t_b < \frac{1}{b}\cdot b = 1$ is a contradiction. Thus, $t_{b+1} \ge \frac{1}{b} > t_1$. Now, we have
	$$1 = t_1 + \dots + t_b < t_2 + \dots + t_{b+1} = 1,$$
	a contradiction. The lemma is proved.
\end{proof}

\begin{example}
	\label{ex.starConfiguration}
	Let $R = \kk[x_1, \dots, x_n]$ and let $h < n$. Let $I$ be the following codimension $h$ star configuration
	$$I = \bigcap_{1 \le i_1 < \dots < i_h \le n} (x_{i_1}, \dots, x_{i_h}).$$
	Let $c$ be as in Theorem \ref{thm.c}; that is, $c$ is the least integer such that, for all $k \in \NN$,
	$$I^{(ck)} = \overline{\left(I^{(c)}\right)^k}.$$
	We claim that $c = \lcm(1,2, \dots, h).$ Note that when $h=n$, $I^k=\overline{I^k}=I^{(k)}$ for all $k$, so $c=1$.
	
	From the description of $\SP(I)$ in (\ref{eq.SP}), we know that the defining half-spaces of $\SP(I)$ are $x_i \ge 0$, for $i = 1, \dots, n$, and $x_{i_1} + \dots + x_{i_h} \ge 1$, for all $1 \le i_1 < \dots < i_h \le n$. Let $S$ be the set of collections $\{i_1, \dots, i_h\}$ with $1 \le i_1 < \dots < i_h \le n$, and set $[n] = \{1, \dots, n\}$. Consider a vertex $v = (a_1, \dots, a_n)$ of $\SP(I)$. Then, $v$ is determined by the unique solution to the following system of inequality, for some subsets $A \subseteq [n]$ and $B \subseteq S$,
	\begin{align}
		\left\{ \begin{array}{lll} x_i = 0 & \text{for} & i \in A \\
		x_i \ge 0 & \text{for} & i \not\in A \\
		\sum_{i \in T} x_i = 1 & \text{for} & T \in B \\
		\sum_{i \in T} x_i \ge 1 & \text{for} & T \in S \setminus B.\end{array}\right. \label{eq.systemIE}
	\end{align}

We will show that for any $T \in B$, $T$ must contain $A$. Indeed, suppose, by contradiction, that there exist $T \in B$ and $i \in A$ such that $x_i \not\in T$. Without loss of generality, assume that $T = \{x_2, \dots, x_{h+1}\}$ and $x_1 \in A \setminus T$.	Then, $a_1 = 0$ and $\sum_{i=2}^{h+1} a_i = 1$. Furthermore, by looking at the other in equality of the system, we also have
$$a_1 + \sum_{\substack{2 \le i \le h+1 \\ i \not= j}} a_i \ge 1 \text{ for } j = 2, \dots, h+1.$$
This implies that $\sum_{i=2}^{h+1} a_i \ge \frac{h}{h-1} > 1$, a contradiction.

Let $a = |A|$. It follows from what we just showed that $(b_1, \dots, b_n)$, with $b_i = 0$ if $i \in A$ and $b_i = \frac{1}{h-a}$ if $i \not\in A$, is a solution to the system (\ref{eq.systemIE}) and, thus, it is the unique solution giving $v$. Particularly, this implies the least common multiple of the denominators of the coordinates in $v$ is $h-a$ for some $a$.

Conversely, for any $0 \le a \le h-1$, consider
$$u = \Big(\underbrace{0, \dots, 0}_{a \text{ times}}, \underbrace{\frac{1}{h-a}, \dots, \frac{1}{h-a}}_{n-a \text{ times}}\Big).$$
Let $A = \{1, \dots, a\}$ and let $B = \{T \in S ~\big|~ A \subseteq T\}$. Consider the system of inequality (\ref{eq.systemIE}) with these choices for $A$ and $B$. Let $v = (a_1, \dots, a_n)$ any solution in $\RR^n_{\ge 0}$ to this system of inequality. Obviously, we have $a_i = 0$ for all $i = 1, \dots, a$. On the other hand, by considering any subset $A'$ in $[n]$ of size $h-a$ that is disjoint from $A$ then since $\sum_{i \in A \cup A'} a_i = 1$, we have that $\sum_{i \in A'} a_i = 1$. Thus, by Lemma \ref{lem.uniqueSol}, we must have $a_{a+1} = \dots = a_n = \frac{1}{h-a}$. We have just shown that $u$ is the unique solution to (\ref{eq.systemIE}) with the chosen $A$ and $B$. Particularly, $u$ is a vertex of $\SP(I)$. Hence, for any $0 \le a \le h-1$, there is a vertex $u$ of $\SP(I)$ for which the least common multiple of the denominators of its coordinate is precisely $h-a$.

We have established the claim that $c = \lcm(1, \dots, h)$.
On the other hand, by \cite[Example 3.9]{GS2021}, it is known that the symbolic Rees algebra $\R_s(I)$ is generated in degrees $1, \dots, h$. This example, therefore, exhibits that the bound for $c$ in Theorem \ref{thm.boundC} is sharp.
\end{example}

\begin{remark} Theorem \ref{thm.boundC} may suggest that the vertices of $\SP(I)$ give the Hilbert basis for the Simis cone $\text{SC}(I)$, and determine the generating degrees of the symbolic Rees algebra $\R_s(I)$ of $I$. This is however not the case. Consider the ideal $I$ as in Example \ref{ex.cFromSP2}. As shown, $\SP(I)$ has 17 vertices. On the other hand, it can be checked that $abcdef \in I^{(4)}$ gives a minimal generator $g = abcdef t^4$ for the symbolic Rees algebra $\R_s(I)$. In fact, the integral Hilbert basis of $\text{SC}(I)$ has precisely 18 elements, 17 of which come from the vertices of $\SP(I)$ and the last one comes from $g$.
\end{remark}

We continue to present our next result that gives a bound for the symbolic generation type of a monomial ideal. We shall need a lemma whose proof is in the same spirit as that given in \cite[Theorem 5.1]{Sin2007}.

\begin{lemma}
	\label{lem.Singla}
	Let $I$ be a monomial ideal and let $\SP(I)$ be its symbolic polyhedron. For any point $v \in \SP(I)$, we can write $v = u+w$, where $u$ is a point on a compact face of $\SP(I)$ and $w \in \RR^n_{\ge 0}$.
\end{lemma}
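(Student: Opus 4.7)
The plan is to produce the required $u$ as the optimum of a linear program on $\SP(I)$ and to argue that minimality forces $u$ to sit on a compact face.

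First I would record two structural features of $\SP(I)$. Being the intersection of finitely many Newton polyhedra of monomial ideals, $\SP(I)$ is a rational polyhedron; since each $\NP(Q_{\subseteq \pp})$ is contained in $\RR^n_{\ge 0}$ and absorbs $\RR^n_{\ge 0}$, the same holds for $\SP(I)$, and letting $t \to \infty$ in the inclusion $p + td \in \SP(I) \subseteq \RR^n_{\ge 0}$ shows that the recession cone of $\SP(I)$ equals $\RR^n_{\ge 0}$ exactly. These facts will make the LP I set up bounded and let me identify the recession cone of any face of $\SP(I)$ as a subcone of $\RR^n_{\ge 0}$.

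Given $v \in \SP(I)$, I would consider the linear program
\[
\min \sum_{i=1}^n u_i \quad \text{subject to } u \in \SP(I), \ u \le v.
\]
The feasible region is non-empty (it contains $v$), closed, and bounded (using $0 \le u \le v$ coordinatewise), hence compact, so the minimum is attained at some $u^* \in \SP(I)$ with $u^* \le v$. Setting $w := v - u^* \in \RR^n_{\ge 0}$ already gives $v = u^* + w$; what remains is to show that $u^*$ lies on a compact face. Let $F$ be the face of $\SP(I)$ whose relative interior contains $u^*$. If $F$ were unbounded, the inclusion $\mathrm{rec}(F) \subseteq \mathrm{rec}(\SP(I)) = \RR^n_{\ge 0}$ would yield a nonzero $d \ge 0$ in $\mathrm{rec}(F)$; because $u^*$ is in the relative interior of $F$ and $d$ lies in the direction space of $F$, the point $u^* - \epsilon d$ would remain in $F \subseteq \SP(I)$ for all sufficiently small $\epsilon > 0$. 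Since $d \ge 0$ this perturbed point still satisfies $u^* - \epsilon d \le v$, yet has strictly smaller objective value, contradicting the optimality of $u^*$. Hence $F$ must be compact.

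The only step that demands care is the perturbation claim: a point in the relative interior of a face $F$ of a polyhedron can be translated by a small negative multiple of any recession vector of $F$ without leaving $F$. This reduces to the standard observation that $\mathrm{rec}(F)$ sits inside the linear direction space $\mathrm{aff}(F) - u^*$, combined with the fact that relative-interior points of $F$ have a neighborhood in $\mathrm{aff}(F)$ contained in $F$, but it is the one place in the argument where one must be explicit about convex-geometric conventions.
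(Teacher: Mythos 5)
Your argument is correct, and it takes a genuinely different route from the paper's. The paper proves the lemma by an explicit geometric descent with induction on the dimension of the face containing $v$: starting from $v$, one moves in a negative coordinate direction $-e_j$ until hitting the boundary of $\SP(I)$, lands on a face $F$, and then, if $F$ is non-compact, uses the fact that its supporting hyperplane $H_{\p,k}$ has a normal vector with some zero coordinate $p_j$ to move within $H_{\p,k}$ along $-e_j$ onto a proper face of $F$; each step contributes a nonnegative summand to $w$, and induction on $\dim F$ terminates the process on a compact face. Your linear-programming argument replaces this iteration with a single optimization: minimize $\sum_i u_i$ over the compact feasible region $\SP(I)\cap\{u\le v\}$, and observe that the unique face of $\SP(I)$ whose relative interior contains the minimizer $u^*$ must be compact, since a nonzero recession direction $d\ge 0$ of that face would allow the strictly improving perturbation $u^*-\epsilon d$. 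The two key facts you rely on --- that $\mathrm{rec}(\SP(I))\subseteq\RR^n_{\ge 0}$ (so any unbounded face has a nonnegative recession vector, with $\sum_i d_i>0$) and that a relative-interior point of a face can be moved a small amount along any direction in the face's direction space without leaving the face --- are both standard and correctly justified. Your approach is shorter and avoids the induction and the case analysis on supporting hyperplanes; the paper's approach is more constructive in that it exhibits $w$ as a sum of explicit coordinate displacements, which is closer in spirit to the argument of Singla that the paper is adapting. Either proof suffices for the applications in Theorem \ref{thm.GT} and Corollary \ref{cor.generatingdegree}.
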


\begin{proof} Suppose that $v = (a_1, \dots, a_n) \in \SP(I)$. Since $\SP(I) \subseteq \RR^n_{\ge 0}$, there exists at least one coordinate $a_j > 0$ of $v$ such that for $\lambda \gg 0$, $v' = (a_1, \dots, a_{j-1}, a_j - \lambda, a_{j+1}, \dots, a_n) \not\in \SP(I)$. If $v$ is not on any face of $\SP(I)$ then let $v''$ be the intersection of the boundary of $\SP(I)$ and the line segment connecting $v$ and $v'$. We have
	\begin{enumerate}
		\item[(1)] $v''$ is on a face $F$ of $\SP(I)$ (not necessarily compact); and
		\item[(2)] $v = v'' + w$ for some $w \in \RR^n_{\ge 0}$.
	\end{enumerate}
Thus, by replacing $v$ with $v''$, it suffices to prove the assertion when $v$ is on a face $F$ of $\SP(I)$ and $F$ is not compact.

Let $H_{\p,k}$ be the supporting hyperplane of $F$. That is, $F = \SP(I) \cap H_{\p,k}$ and $H_{\p,k} = \{\q \in \RR^n ~\big|~ \langle \p ,\q\rangle = k\}$ for some fixed vector $\p \in \RR^n$ and $k \in \ZZ$. We shall use induction on $\dim F$. If $\dim F = 1$ then $v$ can be written as $v = v_j + \b$ for some vertex $v_j$ of $\SP(I)$ and $\b \in \RR^n_{\ge 0}$. The assertion is straightforward in this case. Assume that $\dim F \ge 2$.

Since $F$ is not compact, there exists a coordinate $p_j$ of $\p$ that is 0. Again, for $\lambda \gg 0$, $v' = (a_1, \dots, a_{j-1}, a_j - \lambda, a_{j+1}, \dots, a_n) \not\in \SP(I)$. Observe that, since $p_j = 0$, the line segment connecting $v$ and $v'$ lies in $H_{\p,k}$. Let $v''$ be the intersection of $F$ and the line segment connecting $v$ and $v'$. Then, as before, we have
\begin{enumerate}
	\item[(3)] $v''$ lies in a proper face $F'$ of $F$;
	\item[(4)] $v = v'' + w$ for some $w \in \RR^n_{\ge 0}$.
\end{enumerate}

If $F'$ is compact then the assertion follows. If $F'$ is not compact then by the induction hypothesis, we can write $v'' = u+w'$, where $u$ belong to a compact face of $\SP(I)$ and $w' \in \RR^n_{\ge 0}$. Now, $v = u + (w + w'')$ and the proof completes.
\end{proof}

\begin{theorem}
	\label{thm.GT}
	Let $I$ be a monomial ideal of linear-power type. Suppose that $\{v_1, \dots, v_r\}$ are the vertices of $\SP(I)$. Let $d_i$ be the least common multiple of the denominator of coordinates of $v_i$, for $i = 1, \dots, r$. Set $D = \max\{d_1, \dots, d_r\}$. Then,
	$$\sgt(I) \le \max\{\ell_s(I)D-1, D\}).$$
\end{theorem}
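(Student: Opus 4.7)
The plan is to show that every element $x^{\a} t^k \in \R_s(I)$ with $k > \max\{\ell_s(I) D - 1, D\}$ lies in the $R$-subalgebra of $\R_s(I)$ generated by elements of $t$-degree at most $\max\{\ell_s(I) D - 1, D\}$. Since $I$ is of linear-power type, Lemma~\ref{lem.membership}(2) and Remark~\ref{rmk.SP} give $\a/k \in \SP(I)$. By Lemma~\ref{lem.Singla}, we may write $\a/k = u + w$ where $u$ lies on a compact face $F$ of $\SP(I)$ and $w \in \RR_{\ge 0}^n$. By Corollary~\ref{cor.ellsG}, $e := \dim F \le \mdc(\SP(I)) = \ell_s(I) - 1$. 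Applying Carath\'eodory's theorem to $F$, write
$$u = \sum_{i=1}^{e+1} \lambda_i v_{j_i}$$
as a convex combination of at most $e+1 \le \ell_s(I)$ vertices of $\SP(I)$ that lie on $F$.

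The next step is a floor/remainder decomposition of the scalar weights. Set $\mu_i := \lfloor k\lambda_i / d_{j_i}\rfloor$ and $s_i := k\lambda_i - \mu_i d_{j_i} \in [0, d_{j_i})$, so that $k\lambda_i = \mu_i d_{j_i} + s_i$. This yields
$$\a = \sum_{i=1}^{e+1} \mu_i (d_{j_i} v_{j_i}) + \b, \qquad k = \sum_{i=1}^{e+1} \mu_i d_{j_i} + \sum_{i=1}^{e+1} s_i,$$
where $\b := \sum_i s_i v_{j_i} + kw$. Since $\a$ and each $d_{j_i} v_{j_i}$ are integer vectors, so is $\b$, and clearly $\b \in \RR^n_{\ge 0}$. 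Each $d_{j_i} v_{j_i} \in d_{j_i}\SP(I) \cap \ZZ_{\ge 0}^n$ gives $x^{d_{j_i} v_{j_i}} t^{d_{j_i}} \in \R_s(I)$ of $t$-degree $d_{j_i} \le D$. When $\sum_i s_i > 0$, the vector $\b/\sum_i s_i$ is the sum of the convex combination $\sum_i (s_i / \sum_j s_j) v_{j_i}$ of vertices of $\SP(I)$ and the nonnegative vector $kw / \sum_i s_i$, hence lies in $\SP(I)$ by the absorption property $\SP(I) + \RR^n_{\ge 0} \subseteq \SP(I)$; so $x^{\b} t^{\sum_i s_i} \in \R_s(I)$. (If $\sum_i s_i = 0$, then $\b = kw \in \ZZ^n_{\ge 0}$ and $x^{\b}$ is simply a coefficient in $R$.)

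Putting it together, one obtains the factorization
$$x^{\a} t^k = \prod_{i=1}^{e+1}\left(x^{d_{j_i} v_{j_i}} t^{d_{j_i}}\right)^{\mu_i} \cdot x^{\b} t^{\sum_i s_i},$$
and it remains to bound the $t$-degrees of the factors by $\max\{\ell_s(I) D - 1, D\}$. The degrees $d_{j_i}$ are at most $D$. For $\sum_i s_i$ we argue by cases: if every $\mu_i = 0$, then $k = \sum_i s_i < \sum_i d_{j_i} \le (e+1) D \le \ell_s(I) D$, forcing $k \le \ell_s(I) D - 1$, which contradicts our hypothesis; hence some $\mu_i \ge 1$, and then
$$\sum_{i} s_i \le \sum_i (d_{j_i} - 1) \le (e+1)(D-1) \le \ell_s(I)(D - 1) \le \ell_s(I) D - 1.$$
Thus every factor has $t$-degree at most $\max\{\ell_s(I) D - 1, D\}$, establishing $\sgt(I) \le \max\{\ell_s(I) D - 1, D\}$. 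The main technical obstacle is verifying that the remainder vector $\b$ produced by the Carath\'eodory coefficients and the floor decomposition genuinely represents an element of the correct symbolic power $I^{(\sum_i s_i)}$; this is precisely where the interaction between convexity of $\SP(I)$, the absorption property under translation by $\RR^n_{\ge 0}$, and the integrality built into the mod-$d_{j_i}$ remainders must be combined.
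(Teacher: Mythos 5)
Your argument is correct in substance, but it takes a genuinely different route from the paper's. The paper works through the integral Hilbert basis of the Simis cone $\text{SC}(I)$: it takes a \emph{minimal} generator $x^{\a}t^k$ of $\R_s(I)$, identifies $(\a,k)$ with a Hilbert basis element via \cite[Corollary 3.2]{MRV2011}, and, assuming $k$ exceeds the bound, peels off a \emph{single} summand $(d_{i_j}v_{i_j},d_{i_j})$ --- located by pigeonhole, since $\sum_j m_j = k \ge \ell_s(I)D$ forces some $m_j \ge D \ge d_{i_j}$ --- to contradict minimality. You instead produce a direct, constructive factorization of an \emph{arbitrary} element of large degree into factors of $t$-degree at most $\max\{\ell_s(I)D-1,D\}$, by taking floors of all the Carath\'eodory weights simultaneously; this dispenses with the Simis cone and the Hilbert basis correspondence entirely and is correspondingly more self-contained, at the cost of having to verify membership of the remainder $x^{\b}t^{\sum_i s_i}$ in $\R_s(I)$ by hand (which you do correctly via convexity, absorption under $\RR^n_{\ge 0}$, and the integrality of $\b$ and of $\sum_i s_i$). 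Both proofs share the same skeleton: Lemma \ref{lem.Singla}, Corollary \ref{cor.ellsG}, Carath\'eodory, and the fact that $d_i v_i$ is an integral point of $d_i\SP(I)$.

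One small repair is needed. The inequality $\sum_i s_i \le \sum_i (d_{j_i}-1)$ is not justified term by term: the numbers $s_i = k\lambda_i - \mu_i d_{j_i}$ are reals in $[0,d_{j_i})$ and need not satisfy $s_i \le d_{j_i}-1$ individually (e.g.\ $s_i = d_{j_i} - \tfrac12$ is possible since the $\lambda_i$ are not rational a priori). However, the \emph{sum} $\sum_i s_i = k - \sum_i \mu_i d_{j_i}$ is an integer that is strictly less than $\sum_i d_{j_i}$, so
$$\sum_i s_i \;\le\; \sum_i d_{j_i} - 1 \;\le\; (e+1)D - 1 \;\le\; \ell_s(I)D - 1,$$
which is exactly the bound you need. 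With that adjustment the proof is complete.
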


\begin{proof} Consider any minimal algebra generator $x^\a t^k$ of the symbolic Rees algebra $\R_s(I)$. Particularly, this implies that $x^\a \in I^{(k)}$. By \cite[Corollary 3.2]{MRV2011}, $(\a,k) \in \RR^{n+1}$ is a element of the integral Hilbert basis for the Simis cone $\text{SC}(I)$ of $I$. By Lemma \ref{lem.membership}, we also have $\frac{\a}{k} \in \SP(I)$.
	
	It follows from Lemma \ref{lem.Singla} that $\frac{\a}{k}$ can be written as $\frac{\a}{k} = u+w$, where $u$ belongs to a compact face $F$ of $\SP(I)$ and $w \in \RR^n_{\ge 0}$. Corollary \ref{cor.ellsG} implies that $\dim F \le \ell_s(I)-1$. Thus, by the Caratheodory theorem, there exist at most $\ell_s(I)$ vertices $v_{i_1}, \dots, u_{i_\ell}$, for some $\ell \le \ell_s(I)$, and constants $\lambda_1, \dots, \lambda_\ell \in [0,1]$ such that $\sum_{j=1}^\ell \lambda_j = 1$ and
	$u = \sum_{j=1}^\ell \lambda_jv_{i_j}.$
	Set $m_j = k\lambda_j$, for $j = 1, \dots, \ell$. We get $\sum_{j=1}^\ell m_j = k$ and
	$$\a = ku+kw = \sum_{j=1}^\ell m_jv_{i_j} + kw.$$
	
	Suppose that $k > \max\{\ell_s(I)D-1, D\}$. Then, $\sum_{j=1}^\ell m_j = k > \ell_s(I)D.$ Therefore, there exists $j$ such that $m_j \ge D \ge d_{i_j}$. This implies that
	$$\a - d_{i_j}v_{i_j} = \sum_{s \not= j} m_sv_{i_s} + (m_{i_j}-d_{i_j})v_{i_j} + kw.$$
	It follows, since $\sum_{s\not= j}m_s + (m_{i_j}-d_{i_j}) = k-d_{i_j}$, that
	$$\dfrac{\a-d_{i_j}v_{i_j} - kw}{k-d_{i_j}} \in \SP(I).$$
	Particularly, we have $(\a-d_{i_j}v_{i_j}-kw, k-d_{i_j}) \in \text{SC}(I)$, and so $(\a-d_{i_j}v_{i_j},k-d_{i_j}) \in \text{SC}(I)$. Now we can write
	$$(\a,k) = (\a-d_{i_j}v_{i_j},k-d_{i_j}) + (d_{i_j}v_{i_j},d_{i_j}),$$
	which is a contradiction to the fact that $(\a,k)$ is in the integral Hilbert basis of $\text{SC}(I)$ or, equivalently, $x^\a t^k$ is a minimal generator of $\R_s(I)$. The theorem is proved.
\end{proof}

\begin{remark}
	As observed in Example \ref{ex.starConfiguration}, the vertices of $\SP(I)$ give rise to elements in the integral Hilbert basis for the Simis cone $\text{SC}(I)$. Thus, it follows that $\sgt(I) \ge D$. If $\ell_s(I) \ge 2$ then $\ell_s(I)D-1 \ge D$, so the bound of Theorem \ref{thm.GT} in this case simply reads
	$$\sgt(I) \le \ell_s(I)D-1.$$
	The constant $D$ was included in the right hand side to take care of the case when $\ell_s(I) = 1$.
\end{remark}



Theorem \ref{thm.GT} gives several interesting corollaries. The first part of the following corollary slightly improves \cite[Theorem 5.6]{HHT2007} (since $\ell_s(I) \le n$ and $\ell_s(I) \le n-1=\dim(S)-1$ if $\mm \not \in \Ass(I)$).

\begin{corollary} \label{cor.boundSGT}
	Let $I$ be as in Theorem \ref{thm.GT}. Then
	\begin{enumerate}
		\item $\sgt(I) \le \max\left\{\ell_s(I)\dfrac{(n+1)^{(n+1)/2}}{2^n}-1, \dfrac{(n+1)^{(n+1)/2}}{2^n}\right\}$; and
		\item $\sgt(I) \le \max\{\ell_s(I)c-1, c\}.$
	\end{enumerate}
\end{corollary}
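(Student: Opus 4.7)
The plan is to bound $D = \max\{d_1, \dots, d_r\}$ in each of the two settings and then invoke Theorem~\ref{thm.GT} directly.

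For part (2), the argument is immediate. By definition, each $d_i$ divides $c = \lcm(d_1, \dots, d_r)$, so $d_i \le c$ for every $i$, whence $D \le c$. Substituting into the bound $\sgt(I) \le \max\{\ell_s(I)D-1,\,D\}$ from Theorem~\ref{thm.GT} yields $\sgt(I) \le \max\{\ell_s(I)c-1,\,c\}$.

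For part (1), I would use the combinatorial description of $\SP(I)$ from~(\ref{eq.SP}): since $I$ is of linear-power type, the defining inequalities of $\SP(I)$ are the nonnegativity constraints $a_i \ge 0$ together with $\sum_{x_i \in \pp} a_i \ge \omega_\pp$ for $\pp \in \Min(I)$. The coefficient vectors of all these inequalities lie in $\{0,1\}^n$. Each vertex $v_i$ of $\SP(I)$ is the unique solution of a nondegenerate $n \times n$ subsystem $A_i v_i = w_i$ obtained by imposing equality in $n$ of these inequalities, where $A_i$ is an $n \times n$ matrix with entries in $\{0,1\}$ and $w_i \in \ZZ^n$. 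By Cramer's rule, each coordinate of $v_i$ is of the form $(\det A_i')/(\det A_i)$ with $\det A_i' \in \ZZ$; hence the reduced denominator of every coordinate of $v_i$ divides $|\det A_i|$, and therefore $d_i$, being the least common multiple of these denominators, also divides $|\det A_i|$. In particular $d_i \le |\det A_i|$.

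The classical Hadamard bound for $\{0,1\}$-matrices (obtained via the standard reduction that turns an $n \times n$ $(0,1)$-matrix into an $(n+1)\times(n+1)$ $(\pm 1)$-matrix with the same determinant, up to sign, and then applies the usual Hadamard inequality) asserts that $|\det A| \le (n+1)^{(n+1)/2}/2^n$ for every $n \times n$ matrix $A$ with entries in $\{0,1\}$. This gives $d_i \le (n+1)^{(n+1)/2}/2^n$ for all $i$, hence $D \le (n+1)^{(n+1)/2}/2^n$, and Theorem~\ref{thm.GT} now delivers part (1). The only delicate step is the Cramer's rule passage from the coordinates of $v_i$ to the divisibility $d_i \mid |\det A_i|$, but this is routine: the common denominator in Cramer's formula is literally $\det A_i$, so after reducing, the lcm of the irreducible denominators still divides it.
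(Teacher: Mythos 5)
Your proof is correct and follows essentially the same route as the paper: both parts reduce to bounding $D$ and invoking Theorem \ref{thm.GT}, with part (1) obtained by observing that each vertex of $\SP(I)$ solves a nondegenerate $\{0,1\}$-coefficient system, so by Cramer's rule each $d_i$ divides the determinant of an $n\times n$ $(0,1)$-matrix, which is bounded by the classical Hadamard-type estimate $(n+1)^{(n+1)/2}/2^n$. Your write-up is simply a more detailed version of the paper's argument.
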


\begin{proof} Observe that each vertex $v_j$ of $\SP(I)$ is given by the unique solution of a system of the form (\ref{eq.systemIE}). Particularly, the denominator of any coordinate of $v_j$ and, hence, $d_i$ divides the determinant of an $n \times n$ matrix with entries being 0 and 1. Such a determinant has a classical bound (see, for instance \cite{FS1965}) given by $\dfrac{(n+1)^{(n+1)/2}}{2^n}.$ This proves (1). Part (2) is straightforward since $c \ge D$.
\end{proof}

With essentially the same proof as that of Theorem \ref{thm.GT}, we obtain the following result which improves \cite[Corollary 3.11]{EVY2016} and implies \cite[Corollary 5.3]{Sin2007} (see also \cite{RRV2003}) for linear-power typed ideals.

\begin{corollary}
	\label{cor.generatingdegree}
	Let $I$ be a monomial ideal of linear-power type. Then, $\overline{\R(I)}$ is minimally generated in degrees at most $\max\{\ell(I)-1, 1\}$. In particular, if $I^k = \overline{I^k}$ for $1 \le k \le \ell(I)-1$ then $I^k = \overline{I^k}$ for all $k \in \NN$.
\end{corollary}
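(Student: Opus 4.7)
The plan is to run the argument of Theorem \ref{thm.GT} with the graded family $\I = \{I^k\}_{k \in \NN}$ in place of the family of symbolic powers, and with the Newton polyhedron $\NP(I)$ playing the role of $\SP(I)$. By Remark \ref{rmk.NPSP}, $\Delta(\I) = \NP(I)$, and by Proposition \ref{lem.int}, $\Delta(\overline{\I}) = \NP(I)$ as well; moreover $\R(\I) = \R(I)$ is Noetherian with analytic spread $\ell(I)$. The decisive simplification is that $\NP(I)$ is an integral polyhedron, so in the notation of Theorem \ref{thm.GT} every vertex $v_i$ of $\NP(I)$ lies in $\ZZ^n_{\ge 0}$, forcing each $d_i = 1$ and hence $D = 1$.

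With this substitution in place, let $x^\a t^k$ be a minimal algebra generator of $\overline{\R(I)} = \bigoplus_k \overline{I^k}\,t^k$. By Lemma \ref{lem.membership}.(1), $\frac{\a}{k} \in \NP(I)$, and the cone analog of \cite[Corollary 3.2]{MRV2011} places $(\a,k)$ in the integral Hilbert basis of the cone over $\NP(I)$. Since the proof of Lemma \ref{lem.Singla} uses only that the ambient polyhedron has the form $\conv\{\text{finitely many points}\} + \RR^n_{\ge 0}$, it applies to $\NP(I)$, and we can write $\frac{\a}{k} = u + w$ with $u$ on a compact face $F$ of $\NP(I)$ and $w \in \RR^n_{\ge 0}$. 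Theorem \ref{thm.analyticSpread} yields $\dim F \le \ell(I) - 1$, so the Caratheodory theorem expresses $u = \sum_{j=1}^{\ell} \lambda_j v_{i_j}$ with $\ell \le \ell(I)$, $\lambda_j \in [0,1]$, and $\sum_{j} \lambda_j = 1$. Setting $m_j = k\lambda_j$ gives $\a = \sum_j m_j v_{i_j} + k w$. If $k > \max\{\ell(I)-1, 1\}$, that is, $k \ge \max\{\ell(I), 2\}$, then some $m_j \ge 1 = d_{i_j}$, and the splitting $(\a,k) = (\a - v_{i_j}, k-1) + (v_{i_j}, 1)$ used in Theorem \ref{thm.GT} exhibits $(\a,k)$ as a sum of two non-zero elements of the Rees cone, contradicting minimality of $x^\a t^k$. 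This establishes the bound $\max\{\ell(I)-1,1\}$ on algebra-generating degrees.

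For the ``in particular'' statement, assume $\ell(I) \ge 2$ (the degenerate case $\ell(I)=1$ is handled similarly). Every minimal algebra generator of $\overline{\R(I)}$ has degree at most $\ell(I)-1$; the hypothesis $I^k = \overline{I^k}$ for $1 \le k \le \ell(I)-1$ then places each such generator in $I^k t^k \subseteq \R(I)$, whence $\overline{\R(I)} \subseteq \R(I)$, which forces equality and gives $\overline{I^k} = I^k$ for all $k$. The main obstacle is essentially bookkeeping: one must verify that each ingredient imported from the proof of Theorem \ref{thm.GT} --- namely, the compact-face decomposition of Lemma \ref{lem.Singla}, the analytic-spread interpretation of $\mdc$, and the integral-Hilbert-basis description of the Rees cone --- remains valid when $\SP(I)$ is replaced by $\NP(I)$; none of these steps requires the linear-power-type hypothesis, which is present only to situate the result within the framework of this section.
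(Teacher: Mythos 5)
Your proposal is correct and follows essentially the same route as the paper: run the argument of Theorem \ref{thm.GT} with $\NP(I)$ in place of $\SP(I)$ (via the proof of Lemma \ref{lem.Singla} and Caratheodory), note that integrality of the vertices of $\NP(I)$ forces $D=1$, and derive the contradiction to minimality of $x^\a t^k$ when $k \ge \max\{\ell(I),2\}$. The only cosmetic differences are that the paper cites \cite[Theorem 2.3]{BA2003} directly for the dimension bound on compact faces and phrases the final step as a factorization $x^\a t^k = x^{\a-v_{i_j}}t^{k-1}\cdot x^{v_{i_j}}t$ rather than in Hilbert-basis language, and it disposes of the case $\ell(I)=1$ by observing that $I$ is then principal.
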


\begin{proof} If $\ell(I) = 1$ then $\mdc(\NP(I)) = 0$, and so $\NP(I)$ has only one vertex. This implies that $I$ is principal. Particularly, we have $I^k = \overline{I^k}$ for all $k \in \NN$. The assertion in this case is obvious. Suppose that $\ell(I) \ge 2$. The proof proceeds in the same line of arguments as that of Theorem \ref{thm.GT}.
	
	Consider any minimal generator $x^\a t^k$ of $\overline{\R(I)} = R \oplus \overline{I}t \oplus \overline{I^2}t^2 \oplus \dots$. We have $x^\a \in \overline{I^k}$, and so it follows from Lemma \ref{lem.membership} that $\frac{\a}{k} \in \NP(I)$. By the same proof as that of Lemma \ref{lem.Singla}, replacing $\SP(I)$ with $\NP(I)$, we can write $\frac{\a}{k} = u + w$, where $u$ belongs to a compact face $G$ of $\NP(I)$ and $w \in \RR^n_{\ge 0}$.
	
	Observe that, by \cite[Theorem 2.3]{BA2003}, we have $\dim G \le \ell(I) - 1$. Thus, again by Caratheodory theorem, there exist as most $\ell(I)$ vertices $v_{i_1}, \dots, v_{i_\ell}$ of $G$, for some $\ell \le \ell(I)$, and $\lambda_1, \dots, \lambda_\ell \in [0,1]$ such that $\sum_{j =1}^\ell \lambda_j = 1$ and
	$u = \sum_{j=1}^\ell \lambda_j v_{i_j}.$ Set $m_j = k\lambda_j$, for $j = 1, \dots, \ell$. We get $\sum_{j=1}^\ell m_j = k$ and
	$$\a = ku+kw = \sum_{j=1}^\ell m_jv_{i_j} + kw.$$
	
	Suppose that $k \ge \ell(I)$. Then, there exists an index $j$ such that $m_j \ge 1$. Now, similarly to the computation in Theorem \ref{thm.GT}, we get $\frac{\a-v_{i_j}}{k-1} \in \NP(I)$. That is, $x^{\a-v_{i_j}} \in \overline{I^{k-1}}$, and so $x^\a t^k = x^{\a-v_{i_j}} t^{k-1} \cdot x^{v_{i_j}} t$ is not a minimal generator of $\overline{\R(I)}$, a contradiction. Hence, $k \le \ell(I)-1$ and the theorem is proved.
\end{proof}

As a consequence of Corollary \ref{cor.generatingdegree}, if $I$ is a monomial ideal of linear-power type such that $\SP(I) = \NP(I)$, then we have
$$\sgt(I) \le \max\{\ell_s(I)-1,1\} = \max \{\ell(I)-1,1\}.$$
This bound can be slightly improved for squarefree monomial ideals, as shown in the following corollary.

\begin{corollary}
	\label{cor.boundGT2}
	Let $I$ be a squarefree monomial ideal such that $\NP(I) = \SP(I)$. Then,
	$$\sgt(I) \le \max\{\ell_s(I)-2, 1\} = \max\{\ell(I)-2,1\}.$$
\end{corollary}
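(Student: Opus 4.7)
The plan is to adapt the proof of Corollary~\ref{cor.generatingdegree}, which already gives $\sgt(I) \le \max\{\ell(I)-1,1\}$ whenever $\NP(I)=\SP(I)$; we need to squeeze one more unit out of the squarefree hypothesis. First I will unpack the setup: by Lemma~\ref{lem.int=symb}, $\NP(I)=\SP(I)$ forces $\overline{I^k}=I^{(k)}$ for all $k$, so $\R_s(I)=\overline{\R(I)}$ and $\ell(I)=\ell_s(I)$; moreover, since $I$ is squarefree, every vertex of $\NP(I)=\SP(I)$ is the exponent of a squarefree minimal generator and hence lies in $\{0,1\}^n$. The case $\ell_s(I)\le 2$ reduces to $\sgt(I)\le 1$, which is already given by Corollary~\ref{cor.generatingdegree}, so I will assume $\ell_s(I)\ge 3$.

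Suppose for contradiction that $x^\a t^k$ is a minimal algebra generator of $\R_s(I)$ with $k=\ell_s(I)-1\ge 2$, and proceed as in the proof of Corollary~\ref{cor.generatingdegree}. Lemma~\ref{lem.Singla} gives $\a/k=u+w$ with $u$ on a compact face $F$ of $\NP(I)$ of dimension at most $\mdc(\NP(I))=\ell_s(I)-1$ and $w\in\RR^n_{\ge 0}$; Caratheodory produces $u=\sum_{j=1}^{\ell}\lambda_j v_{i_j}$ with $\ell\le\ell_s(I)$, $\lambda_j>0$, $\sum\lambda_j=1$. Setting $m_j=k\lambda_j$ gives $\sum m_j=k$. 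If $\ell\le \ell_s(I)-1=k$, averaging gives some $m_j\ge 1$ and the single-vertex subtraction from Corollary~\ref{cor.generatingdegree} produces $(\a,k)=(v_{i_j},1)+(\a-v_{i_j},k-1)$ as a nontrivial decomposition in $\text{SC}(I)\cap\ZZ^{n+1}$, contradicting minimality. The essential new case is $\ell=\ell_s(I)=k+1$: since $\sum_{j=1}^\ell m_j = \ell-1$ distributes over $\ell$ positive summands, after relabeling the two largest values $m_1,m_2$ satisfy $m_1+m_2\ge 2(\ell-1)/\ell>1$ (using $\ell\ge 3$). Combined with $v_{i_1},v_{i_2}\in\{0,1\}^n$, the bound $a_s\ge m_1(v_{i_1})_s+m_2(v_{i_2})_s$, and $a_s\in\ZZ$, a case check on coordinates yields $\a\ge v_{i_1}+v_{i_2}$ componentwise. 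I will then attempt the pair decomposition
\begin{equation*}
(\a,k)=(v_{i_1},1)+(v_{i_2},1)+(\a-v_{i_1}-v_{i_2},k-2);
\end{equation*}
when $k=2$ the third summand has second coordinate $0$ and lies in the recession part $\RR^n_{\ge 0}\times\{0\}$ of $\text{SC}(I)$, so the decomposition is nontrivial in $\text{SC}(I)\cap\ZZ^{n+1}$ and we are done.

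The hard part will be the remaining case $k\ge 3$, where I must verify $(\a-v_{i_1}-v_{i_2})/(k-2)\in \SP(I)$, equivalently $|\supp(v_{i_1})\cap\pp|+|\supp(v_{i_2})\cap\pp|\le \sum_{x_i\in\pp}a_i-k+2$ at every minimal prime $\pp$. At primes tight for $u$, the identity $\sum_j \lambda_j |\supp(v_{i_j})\cap\pp|=\sum_{x_i\in\pp}u_i=1$ together with $|\supp(v_{i_j})\cap\pp|\ge 1$ forces $|\supp(v_{i_j})\cap\pp|=1$ for every $j$, so the left side equals $2$ while $\sum_{x_i\in\pp}a_i\ge k$ makes the right side at least $2$; these are essentially the tight-prime considerations from the proof of Theorem~\ref{thm.GT}. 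The main obstacle is the verification at primes $\pp$ not tight for $u$, where the supports $|\supp(v_{i_j})\cap\pp|$ are not pinned down by $F$; here I expect to exploit the strict slack $\sum_{x_i\in\pp}a_i\ge k+1$ available at such primes and the averaging identity $\sum_j\lambda_j|\supp(v_{i_j})\cap\pp|=\sum_{x_i\in\pp}u_i$ to choose $i_1,i_2$ (possibly prime-sensitive) so that the inequality holds for every $\pp$ simultaneously. Once this is cleared, the pair decomposition gives a nontrivial expression of $(\a,k)$ in $\text{SC}(I)\cap\ZZ^{n+1}$, contradicting minimality and yielding $\sgt(I)\le\max\{\ell_s(I)-2,1\}$.
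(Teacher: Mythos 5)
Your reduction to the single critical case $k=\ell_s(I)-1$ with $\ell=\ell_s(I)=k+1$ is correct, and the $k=2$ subcase does close (the two-vertex decomposition with a degree-$0$ remainder in the recession part of $\text{SC}(I)$ is legitimate, since under $\NP(I)=\SP(I)$ the vertices are $0$--$1$ lattice points). But the case $k\ge 3$ is a genuine gap, not a routine verification, and you have essentially acknowledged as much. The inequality you need, $\langle v_{i_1}+v_{i_2},v_\pp\rangle\le\langle\a,v_\pp\rangle-k+2$, is only forced at primes that are tight for $\a$; at a prime with $\langle\a,v_\pp\rangle=k+1$ you need $\langle v_{i_1},v_\pp\rangle+\langle v_{i_2},v_\pp\rangle\le 3$, and nothing prevents a vertex of the face from having $\langle v_{i_j},v_\pp\rangle\ge 3$ (a squarefree generator can meet $\pp$ in many variables). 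So the two-vertex remainder can genuinely fall outside $\text{SC}(I)$. Your proposed repair --- choosing the pair $(i_1,i_2)$ prime-by-prime --- must produce a \emph{single} pair working for all $\pp$ simultaneously, while also keeping $m_{i_1}+m_{i_2}>1$ so that the integrality argument still yields $\a\ge v_{i_1}+v_{i_2}$; no mechanism for this is given, and it is not clear one exists. As written the proof is incomplete precisely where the new content is needed (the bound is nonvacuous already for $\ell_s(I)=4$, $k=3$, as in the cone-free example with $\sgt(I)=2$).

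For contrast, the paper's argument avoids two-vertex subtraction entirely. After disposing of the case where some $m_j\ge 1$ (single-vertex subtraction, as in Corollary \ref{cor.generatingdegree}), it assumes all $m_j<1$, reduces by localization to the case where every coordinate of $\a$ is positive, and then analyzes the set $A$ of primes tight for $\a$: if no prime is tight, or if some unit vector $e_j$ is orthogonal to all $v_\pp$ with $\pp\in A$, then $x^{\a-e_j}t^k$ already lies in $\overline{\R(I)}$, contradicting minimality; otherwise one deduces $w=0$. At that point squarefreeness enters only through the fact that a vertex of $\NP(I)$ of a non-principal squarefree ideal has a zero coordinate: writing $a_1=\sum_{(v_{i_j})_1=1}m_j$ with each $m_j<1$ and $a_1\in\ZZ$ gives $a_1\le s-1$ where $s<\ell$ counts vertices with nonzero first coordinate, whence $k=\sum m_j<(s-1)+(\ell-s)=\ell-1$. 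You may want to either adopt that integrality-counting endgame or find a concrete selection rule for your pair; the former is the shorter path.
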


\begin{proof} If $\ell(I) = 1$ or $\ell(I) = 2$ then the assertion follows directly from Corollary \ref{cor.generatingdegree}. Suppose that $\ell(I) \ge 3$.
It suffices to show that $\overline{\R(I)}$ is minimally generated in degrees at most $\ell(I)-2$.
	
Consider a minimal generator $x^\a t^k$ of $\overline{\R(I)}$. As before, we have $\frac{\a}{k} \in \NP(I)$ that can be written as $\frac{\a}{k} = u+w$, where $u$ belongs to a compact face $G$ of $\NP(I)$ and $w \in \RR^n_{\ge 0}$. Furthermore, there exist $\ell \le \ell(I)$ vertices $v_{i_1}, \dots, v_{i_\ell}$ of $G$ and $m_j \in [0,k]$ such that $\sum_{j=1}^\ell m_j =k$ and
	$$\a = \sum_{j=1}^\ell m_jv_{i_j} + kw.$$

If there exists $m_j \ge 1$ then, similar to the arguments in Corollary \ref{cor.generatingdegree}, we derive at a contradiction to the minimality of $x^\a t^k$. Thus, $0 \le m_j < 1$ for all $j = 1, \dots, \ell$.
By induction on $n$ and localization at the variables corresponding to zero entries in $\a$, we may further assume that all coordinates of $\a$ are nonzero.

Let $e_1, \dots, e_n$ represent the unit vectors in $\RR^n$ and, for $\pp = (x_{i_1}, \dots, x_{i_s}) \in \Ass(I)$, set $v_\pp = e_{i_1} + \dots + e_{i_s}$. It follows from Lemma \ref{lem.membership} and (\ref{eq.SP}) that $\langle \a, v_\pp\rangle \ge k$ for all $\pp \in \Ass(I)$.

If $\langle \a , v_\pp \rangle \ge k+1$ for all $\pp \in \Ass(I)$ then $\langle \a-e_1, v_\pp\rangle \ge k$ for all $\pp \in \Ass(I)$, and so $x^{\a-e_1} t^k \in \SP(I) = \NP(I)$, which is a contradiction to the minimality of $x^\a t^k$. Therefore, there exists a nonempty subset $A$ of $\Ass(I)$ such that
$$\langle \a, v_\pp\rangle = k \text{ if and only if } \pp \in A.$$
Let $H_\pp$ be the hyperplane defined by $\langle \a, v_\pp\rangle = 0$, for $\pp \in \Ass(I)$, and set $H_A = \bigcap_{\pp \in A} H_\pp$.

If there exists a unit vector $e_j$ lying in $H_A$ then it can be seen that $\langle \a-e_j, v_\pp\rangle \ge k$ for all $\pp \in \Ass(I)$. Thus, $x^{\a-e_j}t^k \in \SP(I) = \NP(I)$, which is again a contradiction to the minimality of $x^\a t^k$. We may assume that \emph{none} of the unit vectors $e_1, \dots, e_n$ is on $H_A$.

Observe moreover that, since $v_{i_j}$ is a vertex of $\SP(I)$, we have $\langle v_{i_j}, v_\pp\rangle \ge 1$ for all $\pp \in \Ass(I)$. It follows that
	$$\langle \a, v_\pp\rangle  = \sum_{j=1}^\ell m_j \langle v_{i_j}, v_\pp\rangle + k\langle w, v_\pp\rangle
	 \ge \sum_{j=1}^\ell m_j + k\langle w, v_\pp\rangle = k + k\langle w,v_\pp\rangle.$$
This implies that, for all $\pp \in A$, $\langle w,v_\pp\rangle = 0$. Particularly, if the coefficient of a unit vector $e_j$ in $w$ is nonzero then $\langle e_j, v_\pp \rangle = 0$ for all $\pp \in A$ or, equivalently, $e_j \in H_A$, which is a contradiction to our assumption. We conclude that $w = 0$.

Since $I$ is a squarefree monomial ideal and $I$ is not principal (otherwise, $\ell(I) = 1$), $v_{i_1}$ must contain a coordinate being 0. Without loss of generality, assume that the first coordinate of $v_{i_1}$ is 0, and among $v_{i_1}, \dots, v_{i_\ell}$ there are exactly $s$ vertices with nonzero first coordinates, for some $s < \ell$. Let $\a = (a_1, \dots, a_n)$. Then, ${\displaystyle a_1 = \sum_{v_{i_j} \text{ has nonzero first coordinate}} m_j < s}$. Since $a_1 \in \ZZ$, we have $a_1 \le s-1$. It follows that
$$k = \sum_{j=1}^\ell m_j = \sum_{v_{i_j} \text{ has nonzero first coordinate}} m_j + \sum_{v_{i_j} \text{ has 0 first coordinate}} m_j < s-1 + \ell-s = \ell-1.$$
Hence, we conclude that $k \le \ell-2$.
\end{proof}

The following example shows that the bound in Corollary \ref{cor.boundGT2} is sharp.

\begin{example}
	\label{ex.GT2sharp}
	Let $I = (abc, aef, bdf, cde) = (a,d) \cap (b,e) \cap (c,f) \cap (a,b,c) \cap (a,e,f) \cap (b,d,f) \cap (c,d,e) \subseteq \kk[a, \dots, f]$.
	A direct computation shows that $\SP(I)$ has vertices
	$$\{(1,1,1,0,0,0), (1,0,0,0,1,1), (0,1,0,1,0,1), (0,0,1,1,1,0)\} \subseteq \RR^6.$$
	Particularly, this implies that $c = 1$, and so by Theorem \ref{thm.c}, we have $\SP(I) = \NP(I)$.
	
	On the other hand, the integral Hilbert basis of the Simis cone $\text{SC}(I)$ contains an additional vertex $(1,1,1,1,1,1,2) \in \RR^7$. Thus, $\sgt(I) = 2$. Furthermore, the 4 vertices of $\SP(I)$ form a compact face of dimension 3. Therefore, by Corollary \ref{cor.ellsG}, we get $\ell_s(I)= \ell(I) = 4$. Hence, in this example, $\sgt(I) = \ell_s(I)-2$, exhibiting that the bound in Corollary \ref{cor.boundGT2} is sharp.
\end{example}

\bibliographystyle{alpha}
\bibliography{Reference}

\end{document}